\theoremstyle{plain}
\newtheorem{prop}{Proposition}
\newtheorem{thm}{Theorem}
\newtheorem{lem}{Lemma}
\newtheorem{Bob}{Definition}
\newtheorem{cor}{Corollary}
\theoremstyle{remark}
\newtheorem{rem}{Remark}
\newtheorem{ques}{Question}
\newcommand{\LS}{\ensuremath{\underset{n=1}{\overset{\infty}{\cap}} \, {\underset{i=n}{\overset{\infty}{\cup}}}\,}}
\begin{document}
\author[J.\ Chaika]{Jon Chaika}
\email{jonchaika@math.uchicago.edu}
\title[Hausdorff Dimension for IETs]{On the Hausdorff dimensions of a singular ergodic measure for some minimal interval exchange transformations}
\maketitle
We are interested in the Hausdorff dimension of ergodic measures for IETs. We provide a more complete description of phenomena than in \cite{me}. We briefly recall the question this paper addresses here.
\begin{Bob}
Let $\Delta_{n-1}=\{(l_1,...,l_n):l_i>0, l_1+...+l_n=1 \} $ be the $(n-1)$-dimensional simplex. Given $L=(l_1,l_2,...,l_n)\in \Delta_{n-1}$
 we can obtain n subintervals of the unit
interval: $I_1=[0,l_1) ,
I_2=[l_1,l_1+l_2),...,I_n=[l_1+...+l_{n-1},1)$. If we are also given
 a permutation on n letters $\pi$ we obtain an n-\emph{Interval Exchange Transformation} (IET)  $ T_{\pi,L} \colon [0,1) \to
 [0,1)$ which exchanges the intervals $I_i$ according to $\pi$. That is, if $x \in I_j$ then \begin{center} $T_{\pi,L}(x)= x - \underset{k<j}{\sum} l_k +\underset{\pi(k')<\pi(j)}{\sum} l_{k'}$.\end{center}
\end{Bob}
Interval exchange transformations can be minimal but not uniquely ergodic. Let us consider a minimal (that is, every orbit is dense) interval exchange $T_{\pi,L}$ with ergodic measures $\mu_1$ and $\mu_2$. Let $$L_c=\left(c\mu_1(I_1(T))+(1-c)\mu_2(I_1(T)),...,c\mu(I_d(T))+(1-c)\mu_2(I_d(T))\right).$$  
The IET $S_{\pi,L_c}$ is also minimal and not uniquely ergodic. When $c \in (0,1)$ Lebesgue measure is a preserved but not ergodic measure. When $c \in \{0,1\}$ Lebesgue measure is ergodic and there is another singular ergodic measure. See \cite[Section 1]{ietv} for a more general discussion. In this setting one can ask what is the Hausdorff dimension of the singular ergodic measure. This is equivalent to creating two new metrics on $[0,1)$, $d_{\mu_i}(a,b)=\mu_i([a,b])$ and asking what is the Hausdorff dimension of $\mu_1$ with respect to the metric $d_{\mu_2}$ and vice-versa.

Michael Keane introduced a construction of a minimal but not uniquely ergodic 4-IET \cite{nonue}. This construction is based on proving that there are orbits that have asymptotically different distribution.  
 It leads to two different ergodic measure $\lambda_2$ and $\lambda_3$ (see section \ref{measure Keane}). We use Keane's construction to show  results on the possible size of ergodic measures in terms of Hausdorff dimension. The main results of this paper are: 
\begin{thm}\label{onto} 
(a)$H_{dim}(\lambda_2,d_{\lambda_3})$ can take any value in $[0,1]$.

(b) $H_{dim}(\lambda_3,d_{\lambda_2})$ can take any value in $[0,1].$
\end{thm}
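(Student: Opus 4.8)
The plan is to reduce the computation of $H_{dim}(\lambda_2,d_{\lambda_3})$ to a local (pointwise) dimension computation along the canonical nested intervals of Keane's construction, and then to choose the free parameters of that construction so as to realize any prescribed value $\alpha\in[0,1]$. First note that $x\mapsto\lambda_3([0,x])$ is an isometry of $([0,1),d_{\lambda_3})$ onto a subinterval of the line with its usual metric, so every measure has $d_{\lambda_3}$-dimension in $[0,1]$ and the range claimed in Theorem~\ref{onto} is exactly the range of possible values. A $d_{\lambda_3}$-ball of radius $r$ about $x$ is an interval $J$ with $\lambda_3(J)\asymp r$, so the lower local dimension of $\lambda_2$ with respect to $d_{\lambda_3}$ is
\[
\underline d(x)=\liminf_{J\ni x,\ \lambda_3(J)\to 0}\frac{\log\lambda_2(J)}{\log\lambda_3(J)}.
\]
By Billingsley's lemma in the metric $d_{\lambda_3}$, if $\underline d(x)=\alpha$ for $\lambda_2$-almost every $x$ then $H_{dim}(\lambda_2,d_{\lambda_3})=\alpha$. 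Thus it suffices to produce, for each $\alpha$, parameters making $\underline d=\alpha$ almost surely; part (b) is the same statement with $\lambda_2$ and $\lambda_3$ interchanged.

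I would then make the ratio explicit. From Section~\ref{measure Keane}, the measures $\lambda_2,\lambda_3$ of the stage-$k$ intervals $J_k(x)$ obey product recursions in the free length/height parameters of the construction, so $\log\lambda_2(J_k(x))$ and $\log\lambda_3(J_k(x))$ are telescoping sums of per-stage increments evaluated along the orbit of $x$. Since the IET is ergodic for $\lambda_2$, Birkhoff's theorem gives almost-sure convergence of each of these sums after normalization, hence almost-sure convergence (not merely $\liminf$) of the quotient to a deterministic constant $\alpha$ determined by the asymptotic growth rates of the two towers. The problem is thereby reduced to the purely arithmetic one of steering the ratio of these two growth rates to a target in $[0,1]$.

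The core step is the parameter choice. For $\alpha\in(0,1)$ I would use a balancing/blocking scheme in which the cumulative $\lambda_2$- and $\lambda_3$-log-weights are forced to grow in the fixed ratio $\alpha:1$, while both distinguished towers remain recurrent so that minimality and non-unique ergodicity persist exactly as in Keane's original argument \cite{nonue}. The endpoints are the extreme regimes: for $\alpha=1$ one drives $\log\lambda_2(J_k)\sim\log\lambda_3(J_k)$ (the two measures comparable on the log scale), and for $\alpha=0$ one drives $\log\lambda_2(J_k)=o(\log\lambda_3(J_k))$, i.e.\ $\lambda_3(J_k)$ decays far faster than $\lambda_2(J_k)$. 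In every case one must check that the prescribed growth is still admissible for the construction, i.e.\ that it yields a minimal $4$-IET carrying the two distinct ergodic measures $\lambda_2,\lambda_3$; this constrains how rapidly the parameters may be pushed, and is most delicate at the endpoints.

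The main obstacle will be two points of rigor. First, at the endpoints the per-stage parameters grow without bound, so I must verify that Keane's mechanism producing two ergodic measures survives the fast growth; this requires quantitative control (a Borel--Cantelli or large-deviation estimate for the tower-visit statistics) rather than the soft argument that suffices for bounded parameters. Second, the dimension is defined through arbitrary $d_{\lambda_3}$-balls, whereas the computation above is along the canonical intervals $J_k(x)$; I must show these agree, i.e.\ that an arbitrary interval $J$ with $J_{k+1}(x)\subset J\subset J_k(x)$ has $\log\lambda_2(J)/\log\lambda_3(J)$ close to the canonical ratio. When the per-stage growth is bounded this is immediate, but in the endpoint regimes it demands a bounded-distortion estimate between consecutive stages. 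Once both are established, Billingsley's lemma gives $H_{dim}(\lambda_2,d_{\lambda_3})=\alpha$, and the symmetric parameter choice gives part (b).
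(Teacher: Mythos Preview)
Your overall plan---reduce to local dimension via a Billingsley/Frostman argument along the canonical intervals $I_i^{(k)}$, then choose the Keane parameters $m_k,n_k$ to hit a prescribed $\alpha$---is exactly the paper's strategy. The paper packages the lower bound as Lemma~\ref{loc est} plus Propositions~\ref{L2 low est} and~\ref{L3 low est}, and the upper bound as explicit coverings by the Rokhlin towers $O(I_2^{(k)})$ and $O(I_3^{(k)})$ (Propositions~\ref{L2 up est} and~\ref{L3 up est}); both sides reduce to the single deterministic sequence $\log_{\lambda_3(I_2^{(k)})}\lambda_2(I_2^{(k)})$ (respectively $\log_{\lambda_2(I_3^{(k)})}\lambda_3(I_3^{(k)})$), and then one simply solves for $m_k,n_k$.

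The genuine gap in your write-up is the appeal to Birkhoff's theorem. The increments $\log\lambda_i(J_k(x))-\log\lambda_i(J_{k-1}(x))$ are \emph{not} of the form $f(T^nx)$ for a fixed $f$: they depend on the stage $k$ through the parameters $m_k,n_k$, and Keane's inequalities $3(n_k+1)\le m_k\le\tfrac12(n_{k+1}+1)$ force these to diverge, so the process is intrinsically non-stationary and no ergodic average is available. The paper never needs almost-sure convergence of $\underline d(x)$; instead it observes (Lemmas~\ref{L2bigorbit}, \ref{L3bigorbit}) that $\limsup O(I_2^{(k)})$ has positive $\lambda_2$-measure, which suffices for the upper bound, and for the lower bound shows directly (Lemma~\ref{loc est}) that for \emph{every} interval $J$ the ratio $\log\lambda_2(J)/\log\lambda_3(J)$ is controlled by $\min_i \log_{\lambda_3(I_i^{(k)})}\lambda_2(I_i^{(k)})$, the minimum occurring at $i=2$ by the estimates of Section~\ref{estimates}. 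This also dissolves your two stated worries: Keane's theorem already guarantees minimality and two ergodic measures for \emph{all} admissible growth rates, so no extra Borel--Cantelli argument is needed there; and the passage from arbitrary balls to canonical intervals is handled by the repeating-block structure of $I_2^{(k)}$ and $I_3^{(k)}$ rather than by any bounded-distortion estimate, which indeed would fail under the super-exponential parameter growth the construction requires.
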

\begin{thm} \label{flip} $(H_{dim}(\lambda_2,d_{\lambda_3}), H_{dim}(\lambda_3,d_{\lambda_2}))$ can take values $(0,0),(1,0),(0,1) $ or $(1,1)$.
\end{thm}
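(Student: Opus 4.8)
The plan is to reduce each Hausdorff dimension to a lower local-dimension computation and then to steer the combinatorics of Keane's construction into four regimes. Recall that in the metric $d_{\lambda_3}$ a ball about $x$ is essentially an interval $J \ni x$ whose radius is $\lambda_3(J)$, so by a Billingsley/Frostman-type argument $H_{dim}(\lambda_2,d_{\lambda_3})$ is governed by the $\lambda_2$-almost-everywhere value of
\[
\underline{d}(x)=\liminf_{J\downarrow x}\frac{\log\lambda_2(J)}{\log\lambda_3(J)},
\]
and symmetrically for $H_{dim}(\lambda_3,d_{\lambda_2})$ with the roles of the two measures exchanged. The natural family of intervals $J$ to use are the levels of the nested Rokhlin towers produced by Keane's construction, since for these atoms both $\lambda_2$- and $\lambda_3$-measures are read off directly from the combinatorial data; this is the same reduction that underlies Theorem \ref{onto}.

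First I would record, for each stage $n$ of the construction, the $\lambda_2$- and $\lambda_3$-measures of the tower levels as explicit functions of the cutting parameters, together with which levels are $\lambda_2$-typical and which are $\lambda_3$-typical. Because $\lambda_2$ and $\lambda_3$ are mutually singular, a $\lambda_2$-generic point and a $\lambda_3$-generic point descend through genuinely different sequences of atoms; this separation is the mechanism that will let me prescribe the two liminfs independently. Concretely I would arrange the construction to alternate between long blocks of stages in which the two measures are forced to scale comparably (pushing a ratio toward $1$) and long blocks in which one measure overwhelms the other on its own typical atoms (pushing a ratio toward $0$).

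With that in hand the four corners are obtained by choosing how the two kinds of blocks are interleaved. For $(1,1)$ I would keep the construction in the comparable regime at every stage, so that $\log\lambda_2(J)\sim\log\lambda_3(J)$ along both families of nested atoms while the measures remain mutually singular (the raw ratio $\lambda_2(J)/\lambda_3(J)$ oscillates, but its logarithm is negligible against $\log\lambda_3(J)$). For $(0,0)$ I would do the opposite at every stage, making each measure dominate, in logarithmic scale, on its own generic atoms, which forces both liminfs to $0$. The corners $(1,0)$ and $(0,1)$ require the asymmetric interleaving: balance the $\lambda_2$-typical atoms while letting $\lambda_3$ overwhelm $\lambda_2$ on the $\lambda_3$-typical atoms, and vice versa.

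The main obstacle is the asymmetric pair $(1,0)$ and $(0,1)$, and more precisely the control of the $\liminf$ rather than the $\limsup$. Setting one coordinate to its target value requires parameters that operate on one family of atoms, but those same parameters also act on the other family at the intervening stages, and I must verify they do not inadvertently raise or lower the other liminf away from its target. This means the block lengths have to be tuned so that, at every sufficiently deep stage and not merely along a subsequence, the ratio along each family stays within a shrinking neighborhood of its designated corner value. Checking this uniform-in-stage control—so that the stages devoted to one coordinate do not pollute the liminf of the other—is where the real work lies; the two symmetric corners $(0,0)$ and $(1,1)$ should then fall out as the special cases where a single regime is used throughout.
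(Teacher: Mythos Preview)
Your reduction to local dimensions via a Frostman-type argument is exactly what the paper carries out in Propositions~\ref{L2 up est}--\ref{L3 low est}, and your identification of the tower levels as the natural test intervals is correct. The divergence is in how you propose to steer the two liminfs.

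You frame the asymmetric corners $(1,0)$ and $(0,1)$ as the hard cases, requiring a delicate interleaving of blocks so that stages devoted to one coordinate do not ``pollute'' the liminf of the other, and you flag this uniform-in-stage control as ``where the real work lies.'' That concern would be legitimate for a generic two-parameter problem, but in Keane's construction it evaporates: the measure estimates of Section~\ref{estimates} show that $H_{dim}(\lambda_3,d_{\lambda_2})$ is governed by how $m_k$ grows relative to $n_k$ (a same-stage ratio), whereas $H_{dim}(\lambda_2,d_{\lambda_3})$ is governed by how $n_{k+1}$ grows relative to $m_k$ (a between-stage ratio). These two knobs are turned at different half-steps of the recursion and are therefore independent. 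One can set \emph{both} at every stage simultaneously---for instance $m_k=n_k^k$ together with $n_{k+1}=4m_k$ gives $(1,0)$ directly---with no alternation, no blocks, and no cross-contamination to verify. The asymmetric corners are thus no harder than the symmetric ones, and the obstacle you isolate is one the decoupling simply removes.
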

\begin{Bob} Given $T \colon [0,1) \to [0,1)$, a $\mu$-ergodic map, we say a point $x_0\in [0,1)$ is \emph{generic for} $\mu$ if $\underset{N \to \infty}{\lim} \frac 1 {N} \underset{n=1}{\overset{N}{\sum}}f(T^n(x_0))=\int_0^1 fd\mu$ for every $f \in C([0,1])$.
\end{Bob}
The definition requires that the limit exists.
\begin{thm} \label{generic} There exists a minimal non uniquely ergodic IET $T$  where the complement of Lebesgue generic points has Hausdorff dimension 0. 
\end{thm}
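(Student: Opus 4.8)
The plan is to build $T$ through the cutting-and-stacking tower picture that underlies Keane's construction \cite{nonue}, tuned so that Lebesgue measure $\lambda$ is one of the two ergodic measures and the second ergodic measure $\nu$ is singular with respect to $\lambda$, and then to bound the dimension of the non-generic set by a $\limsup$-covering argument. The construction produces a nested sequence of castles $\mathcal{T}_n$ that partition $[0,1)$ into finitely many towers whose levels are subintervals; $\mathcal{T}_{n+1}$ is obtained by stacking copies of the $\mathcal{T}_n$-towers according to a nonnegative integer matrix $M_n$, and the $T$-invariant probability measures are precisely the normalizations of the vectors lying in the nested cone $\bigcap_n M_1 \cdots M_n \, \mathbb{R}^{k_n}_{\geq 0}$. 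Non-unique ergodicity is the statement that this intersection is a genuine two-dimensional cone, whose two extreme rays I arrange to be $\lambda$ and $\nu$. The freedom I exploit is that, once the combinatorial type of the $M_n$ is fixed to guarantee this, the widths $\ell_n = |I^{(n)}|$ of the base intervals and the tower heights $h_n$ remain adjustable parameters that I can drive to $0$ and $+\infty$ as rapidly as I wish.

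Next I would localize the failure of $\lambda$-genericity at each scale. For each $n$ let $G_n \subseteq [0,1)$ be the union of those levels of $\mathcal{T}_n$ from which the orbit segment of length $\asymp h_n$ that traverses the tower has empirical distribution within $\varepsilon_n$ of $\lambda$, where $\varepsilon_n \downarrow 0$; set $B_n = [0,1) \setminus G_n$. The combinatorial choice that creates $\nu$ concentrates all the ``$\nu$-favoring'' mass at stage $n$ in a small number of columns that I make geometrically thin, so that $B_n$ is a union of at most $c_n$ intervals, each of length at most $\ell_n$, with $c_n \ell_n^s \to 0$ for every $s>0$ (achieved by letting $\ell_n$ shrink superexponentially relative to the growth of $c_n$). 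The key dynamical claim is the inclusion $N \subseteq \limsup_n B_n = \bigcap_N \bigcup_{n \geq N} B_n$: if $x$ lies in $B_n$ for only finitely many $n$, then beyond some stage every tower segment through $x$ distributes like $\lambda$, and, using the tower structure to interpolate between consecutive heights, the Birkhoff averages $\frac{1}{N}\sum_{j=1}^{N} f(T^j x)$ converge to $\int f \, d\lambda$ for every continuous $f$ and every $N$, i.e. $x$ is $\lambda$-generic.

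Granting the inclusion, the dimension bound is routine. For fixed $s > 0$ and any $N$, the intervals making up $\bigcup_{n \geq N} B_n$ cover $\limsup_n B_n \supseteq N$ and have diameters at most $\max_{n \geq N} \ell_n$, while $\sum_{n \geq N} c_n \ell_n^s \to 0$ as $N \to \infty$. Hence $\mathcal{H}^s(N) = 0$ for every $s > 0$, so $H_{dim}(N) = 0$, which is the assertion of the theorem.

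The main obstacle is the dynamical claim in the second paragraph, and in particular its quantitative, all-times version. Controlling Birkhoff averages along the special times $N \asymp h_n$ (the tower heights) is comparatively soft, but genericity demands convergence for every $N$, including times that fall strictly between consecutive heights; making the stage-$n$ equidistribution of the good points propagate to all such intermediate times forces an upper constraint on how fast $h_{n+1}/h_n$ may grow. This pulls against the superexponential shrinkage of $\ell_n$ that the covering estimate wants, so the delicate part is to verify that Keane's construction has enough simultaneous freedom in the two families of parameters $(\ell_n)$ and $(h_n)$ to satisfy both demands at once; I expect this is where the bulk of the work, and any genuinely new estimate beyond Theorem \ref{onto}, will lie.
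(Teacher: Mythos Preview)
Your $\limsup$-covering strategy is exactly the one the paper uses: the complement of the $\lambda_3$-generic points is contained in $\limsup_k E_k$ for explicit unions $E_k$ of tower levels, and one drives $\sum_k (\text{number of levels in }E_k)\cdot(\text{width})^s\to 0$ for every $s>0$ by choosing $n_{k+1}$ enormous relative to the already-determined $b_{k,j}$ (concretely $m_k=3n_k$, $n_{k+1}=b_{k,2}^{\,k}$). Where your outline diverges from the paper is in the handling of the all-times genericity problem, and the divergence matters because your anticipated resolution would fail.

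You frame the difficulty as a competition between keeping $h_{n+1}/h_n$ bounded (so that equidistribution at heights propagates to intermediate $N$) and shrinking $\ell_n$ superexponentially (for the covering). That competition is genuine and cannot be won: the very parameter $n_{k+1}$ that makes the bad columns thin in the $\lambda_3$-metric also makes $b_{k+1,3}/b_{k,3}\asymp n_{k+1}$ blow up, so no ``middle ground'' exists. The paper does not try to bound height ratios at all. Instead it exploits the specific itinerary of $I_3^{(k)}$ under $T|_{I^{(k-1)}}$: a point in $O(I_3^{(k)})$ spends a long initial block (roughly $t_k(x)$ steps, where $t_k(x)=\min\{n\ge 0:T^n x\in O(I_1^{(k)})\}$) cycling through copies of $O(I_3^{(k-1)})$, then makes a short excursion of length $\asymp b_{k-1,4}+b_{k-1,1}$ through $O(I_4^{(k-1)})\cup O(I_1^{(k-1)})$, then returns to the good tower. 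Hence for \emph{every} $N$ between consecutive heights the Birkhoff sum is an average of complete $O(I_3^{(k-1)})$-traversals plus an error controlled by $b_{k,1}/t_k(x)$; if this ratio tends to $0$ the point is $\lambda_3$-generic (this is the paper's Proposition~\ref{L3 gen}). The additional exceptional set $\{x:\limsup_k b_{k,1}/t_k(x)>0\}$ is again a union of a controlled number of thin levels at each stage and is shown to have Hausdorff dimension $0$ by the same covering argument, once $n_{k+1}$ is chosen large enough. So the correct fix is not to temper the growth of the heights but to enlarge your $B_n$ by the near-excursion levels and to use the nested itinerary structure, not a height-ratio bound, for the interpolation.
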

Lebesgue measure is ergodic in this example. This says that all but a set of Hausdorff dimension zero of the points behave Lebesgue typically. Recall that a dense G$_{\delta}$ set of points are not generic for any ergodic measure of a continuous, not uniquely ergodic, minimal map of a compact metric space. On the other hand, by the Birkhoff Ergodic Theorem (and the fact that $C[0,1]$ with supremum norm has a countable dense set) if $\mu$ is an ergodic probability measure then $\mu$ almost every point is $\mu$ generic.

The first section provides a description of Keane's construction. The second section proves bounds on the measures of subintervals. The third section briefly recalls Hausdorff dimension and proves the theorems. Some concluding remarks are made at the end of the paper. There is an appendix that shows that the two ergodic measures can approximate each other differently.

\section{An introduction to Keane type examples}

Consider IETs with permutation $(4213)$. Observe that the second interval gets shifted by $l_4-l_1$. If this difference is small relative to $l_2$ then much of $I_2$ gets sent to itself. At the same time, pieces of $I_3$ do not reach $I_2$ until they have first reached $I_4$. This is the heart of the Keane construction. The details of the Keane construction are centered  around iterating this procedure by the first return map. 
\begin{Bob} Let $T:[0,1) \to [0,1)$ be a Lebesgue measure preserving transformation and $J\subset [0,1)$. $T|_J\colon J \to J$ denotes the \emph{first return map} to $J$. That is if $x \in J$ let $r(x) \min\{n>0: T^n(x) \in J\}$. $T|_J(x)=T^{r(x)}(x)$.
\end{Bob}
Keane considered the first return map on the fourth interval, which we denote $I^{(1)}$. The first return map on this interval is once again a 4-IET. (The induced map of an IET on $I_j$ is an IET on at most the same number of intervals. This is in general false for the induced map of an IET on $[a,b)$.) 
Keane showed that by choosing the lengths appropriately one could ensure that this induced map had the permutation (2431). Name these in reverse order and we once again get a (4213) IET. Motivated by this, we name the 4 exchanged subintervals of $I^{(1)}$ under $T|_{I^{(1)}}$ in reverse order; that is, $I^{(1)}_1$ is the subinterval furthest to the right. Keane also showed that for any choice $ m,n \in \mathbb{N} $ one can find
 an IET whose landing pattern of $I^{(1)}_j$ is given by the columns of following matrix:
\begin{center} $A_{m,n}=
\left(
\begin{array}{cccc}
0 & 0 & 1 &1 \\
m-1 & m & 0 & 0 \\
n & n & n-1 & n \\
1 & 1 & 1 & 1 \end{array} \right)$; $\quad$ $m, n \in \mathbb{N} =\{1, 2, ...\} $.
\end{center} 

 In order to see this, pick lengths for $I^{(1)}$ and write it as a column vector. Now assign lengths to the original IET by multiplying this column vector by $A_{m,n}$. The induced map will travel according to this matrix by construction. For instance, if one chooses lengths $[\frac 1 4,\frac 1 4,\frac 1 4,\frac 1 4]$ for $I^{(1)}$ one gets lengths of 
$$[\frac{2}{2+2m-1+4n-1+4}, \frac{2m-1}{2m+4n+4}, \frac{4n-1}{2m+4n+4}, \frac {4}{2m+4n+4}]$$ for the original IET (after renormalizing).
For any finite collection of matrices one can iterate this construction. (Assign lengths for $I^{(k)}$ by multiplying the lengths of $I^{(k+1)}$ by $A_{m_{k+1},n_{k+1}}$, multiply the resulting column vector by $A_{m_{k},n_{k}}$ and so forth. $I^{(k+1)}$ is defined inductively as the fourth interval of $I^{(k)}$.) Compactness (of $\Delta_3$, which can be thought of as the parameterizing space of (4213) IETs) ensures that we can pass to an infinite sequence of these matrices.

Since the intervals are named in reverse order, the discontinuity (under the induced map) between $I_2^{(1)}$ and $I_3^{(1)}$ is given by $T^{-1}(\delta_1)$ where $\delta_1$ denotes the discontinuity between $I_1$ and $I_2$. As the first row of the matrix suggests ${I_1=T(I_4^{(1)} \cup I_3^{(1)})}$. 
The discontinuity (under the induced map) between $I_1^{(1)}$ and $I_2^{(1)}$ is given by $T^{-m}(\delta_2)$ where $\delta_2$ denotes the discontinuity between $I_2$ and $I_3$. As the second row of the matrix suggests $$I_2=T(I_2^{(1)} \cup I_1^{(1)})\cup T^2(I_2^{(1)} \cup I_1^{(1)}) \cup ...\cup T^{m-1}(I_2^{(1)} \cup I_1^{(1)}) \cup T^m (I_2).$$ 
The discontinuity (under the induced map) between $I_3^{(1)}$ and $I_4^{(1)}$ is given by $T^{-n-1}(\delta_3)$ where $\delta_3$ denotes the discontinuity between $I_3$ and $I_4$.
 As the third row of the matrix suggests $$I_3= T^m(I_1^{(1)}) \cup T^{m+1}(I_2^{(1)}) \cup T^2(I_4^{(1)} \cup I_3^{(1)}) \cup T^{m+1}(I_1^{(1)}) \cup T^{m+2}(I_2^{(1)}) \cup T^3(I_4^{(1)} \cup I_3^{(1)}) \cup$$
 $$... \cup T^{m+n-1}(I_1^{(1)}) \cup T^{m+n}(I_2^{(1)}) \cup T^n(I_4^{(1)} \cup I_3^{(1)}) \cup T^{m+n}(I_1^{(1)}) \cup T^{m+n+1}(I_2^{(1)}) \cup T^{n+1}(I_4^{(1)}).$$  $I_4=I_4^{(1)} \cup I_3^{(1)} \cup I_2^{(1)} \cup I_1^{(1)}$. 
As the columns of the matrix suggest, this is also $$I_4= T^{n+1}(I_3^{(1)}) \cup T^{m+n+1}(I_2^{(1)}) \cup T^{m+n}(I_1^{(1)}) \cup T^{n+2}(I_4^{(1)}).$$
 To summarize, the composition of $I_j$ can be given by the $j^{th}$ row of the matrix. The travel before first return of $I_j^{(1)}$ can be given by the $j^{th}$ column. Additionally, because the intervals were named in reverse order, the permutation of the induced map is once again $(4213)$.

It is important for this construction that everything be iterated. The composition of $I_j^{(k)}$ in pieces of $I^{(k+r)}$ is given by $e_j^{\tau} A_{m_{k+1},n_{k+1}}...A_{m_{k+r},n_{k+r}}$ (where $e_j^{\tau}$ denotes the transpose pf $e_j$). Likewise, the travel of $I_j^{(k+r)}$ under $T|_{I^{(k)}}$ before first return to $I^{(k+r)}$ is given by $A_{m_{k+1},n_{k+1}}...A_{m_{k+r},n_{k+r}}e_j$.

\begin{Bob}Let $O(I_j^{(k)})$ denote the disjoint images under $T$ of $I_j^{(k)}$ before first return to $I^{(k)}$.
\end{Bob}
\begin{Bob} Let $b_{k,i}$ denote the first return time of $I_i^{(k)}$ to $I^{(k)}$. 
\end{Bob}
\begin{rem} $b_{k,i}$ is given by $|A_{m_1,n_1}...A_{m_k,n_k}e_i|_1 $. In particular, $b_{k,2}=m_{k}b_{k-1,2}+n_{k}b_{k-1,3}+b_{k-1,4}$ and $b_{k,3}=b_{k-1,1}+(n_{k}-1)b_{k-1,3}+b_{k-1,4}$.
\end{rem}
\begin{rem} $O(I_i^{(k)})= \underset{i=1}{\overset{b_{k,i}-1}{\cup}}T^i(I_j^{(k)})$.
\end{rem}

Now for some explicit statements about the travel of subintervals of $I^{(k)}$ under the induced map $T|_{I^{(k)}}$. When $I_3^{(k)}$ returns to $I^{(k)}$ it entirely covers $I_4^{(k)}$. It is a subset of $I_3^{(k)} \cup I_4^{(k)}$.
When $I_4^{(k)}$ returns to $I^{(k)}$ it entirely covers $I_1^{(k)}$. It intersects $I_2^{(k)}$. Moreover part of this intersection will stay in $O(I_2^{(k)})$ for the next $m_{k+1}b_{k,2}$ images (the other part $(m_{k+1}-1)b_{k,2}$.)
When $I_2^{(k)}$ returns to $I^{(k)}$ it intersects $I_3^{(k)}$. Moreover this piece of intersection will stay in $O(I_3^{(k)})$ for the next $n_{k+1}b_{k,3}$ images.


Some facts to keep in mind:
\begin{enumerate}
\item The choice of $n_k$ has no effect on $b_{i,2}$ for $i <k$.
\item The choice of $n_k$ has no effect on $b_{i,3}$ for $i < k$.
\item The choice of $m_k$ has no effect on $b_{i,2}$ for $i < k$.
\item The choice of $m_k$ has no effect on $b_{i,3}$ for $i < k+1$.
\end{enumerate}
\section{Measure estimates for Keane's construction}\label{measure Keane}

The previous section discussed the \emph{topological} properties of Keane type IETs. Keane's construction of these IETs was motivated by their measure properties.

 In Keane's example we have a minimal non-uniquely ergodic  4-IET $T$ with ergodic measure $\lambda_2$ and $\lambda_3$. To gain some further intuition consider the product:
\begin{center} $
\left(
\begin{array}{cccc}
0 & 0 & 1 &1 \\
m-1 & m & 0 & 0 \\
n & n & n-1 & n \\
1 & 1 & 1 & 1 \end{array} \right)  \left( \begin{array}{c} a \\ b \\ c \\ d \end{array} \right)=\left( \begin{array}{c} c+d \\ (m-1)a+mb \\ n(a+b+ c +d)-c\\ a+b+c+d \end{array} \right)$
\end{center} 
Notice that if $a=c=d=0$, $b=1$, $m$ is much bigger than $n$ and $m$ is large then the resulting column vector has small angle with the original. Likewise, if  $a=b=d=0$, $c=1$ and $n$ is large then the resulting column vector has small angle with the original. Motivated by this, we introduce another piece of notation.
\begin{Bob} Let $\bar{A}_{m,n}v= \frac{A_{m,n}v}{|A_{m,n}v|}$, where $|w|$ is the sum of the entries in $w$.
\end{Bob} 
 Michael Keane proved:
\begin{thm} (Keane \cite{nonue}) If $3(n_k+1)\leq m_k\leq \frac 1 2 (n_{k+1}+1)$ and $n_1>9$ then an IET with lengths given by 
$$\underset{r \to \infty}{\lim}\underset{k=1}{\overset{r}{\prod}} \bar{A}_{n_k,m_k}v$$
 is minimal but not uniquely ergodic for any $v \in \Delta_3$.
  Moreover it has two ergodic measure $\lambda_2$ and $\lambda_3$
   which assign measures to intervals given by $$\underset{r \to \infty}{\lim}\underset{k=1}{\overset{r}{\prod}} \bar{A}_{n_k,m_k}e_2$$
    and $$\underset{r \to \infty}{\lim}\underset{k=1}{\overset{r}{\prod}} \bar{A}_{n_k,m_k}e_3$$ respectively. 
\end{thm}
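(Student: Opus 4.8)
The plan is to establish minimality, non-unique ergodicity, and the identification of the two ergodic measures by analyzing the products of the renormalization matrices $A_{m_k,n_k}$ under the hypotheses $3(n_k+1)\le m_k \le \tfrac12(n_{k+1}+1)$ and $n_1 > 9$. The key object is the nested sequence of simplices $A_{n_1,m_1}\cdots A_{n_r,m_r}(\Delta_3)$, whose intersection determines which invariant measures survive. I would first record the cone structure: each $A_{m,n}$ maps the positive orthant into itself, so the images of $\Delta_3$ are nested projectively. The existence of the limits $\lim_{r\to\infty}\prod_{k=1}^r \bar A_{n_k,m_k} v$ reduces to showing that the diameter of the projectivized image $\bar A_{n_1,m_1}\cdots\bar A_{n_r,m_r}(\Delta_3)$ does \emph{not} shrink to a point (that would force unique ergodicity), but that it collapses onto a one-dimensional segment whose endpoints are the two ergodic measures.

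\textbf{The main steps.} First I would verify minimality: the growth conditions guarantee that every column of every $A_{m_k,n_k}$ is nonzero and that the first-return structure keeps reintroducing every interval, so by the standard criterion (no proper invariant union of intervals, equivalently the matrix products have all entries eventually positive along the orbit of each $I_j^{(k)}$) the IET is minimal. Second, and this is the heart, I would analyze the observation already highlighted after the matrix display: when $m$ is large relative to $n$ the vector $A_{m,n}e_2$ is nearly fixed in direction by $A_{m,n}$, and when $n$ is large $A_{m,n}e_3$ is nearly fixed; the inequalities $3(n_k+1)\le m_k$ and $m_k\le\tfrac12(n_{k+1}+1)$ are precisely calibrated so that at \emph{alternate} stages one of these two directions is strongly contracted toward while the other is not. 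Concretely I would show the $e_2$- and $e_3$-components of $\prod \bar A_{n_k,m_k} v$ each converge (monotonically along the relevant subsequence, using that the ratio of the dominant entries is controlled by $m_k/n_k$ and $n_{k+1}/m_k$, both bounded below), giving two distinct limiting directions $\lambda_2 = \lim \prod \bar A_{n_k,m_k} e_2$ and $\lambda_3 = \lim \prod \bar A_{n_k,m_k} e_3$. Third, I would show these limits are independent of the starting vector $v\in\Delta_3$ modulo the segment they span, which gives the stated formula for the lengths, and that $\lambda_2\ne\lambda_3$, so the simplex does not collapse to a point and unique ergodicity fails. Finally, identifying these two limit vectors as the actual ergodic invariant measures follows because any invariant measure assigns to $I_j^{(k)}$ a mass proportional to an entry of a limiting column vector, and the extreme points of the limiting segment are exactly $\lambda_2$ and $\lambda_3$; ergodicity of each follows from extremality in the (Choquet) simplex of invariant measures.

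\textbf{The main obstacle} is the quantitative two-scale estimate in the second step: one must show the limit segment is genuinely one-dimensional (neither a point nor the full simplex). The lower bound $3(n_k+1)\le m_k$ forces the $e_2$-direction to contract at stage $k$, but the upper bound $m_k\le\tfrac12(n_{k+1}+1)$ must simultaneously \emph{prevent} the $e_3$-direction from being swept into the same limit at stage $k+1$; balancing these two requires tracking the two largest coordinates of the partial products and showing their ratio stays bounded away from both $0$ and $\infty$ along the full sequence while each individually stabilizes. I expect this to require a careful induction on $k$ estimating the $2$nd and $3$rd coordinates of $A_{n_1,m_1}\cdots A_{n_k,m_k} e_i$ from the recursions $b_{k,2}=m_k b_{k-1,2}+n_k b_{k-1,3}+b_{k-1,4}$ and $b_{k,3}=b_{k-1,1}+(n_k-1)b_{k-1,3}+b_{k-1,4}$ recorded in the remark, together with the facts (1)--(4) listed at the end of Section~1 isolating the dependence of $b_{i,2},b_{i,3}$ on $m_k,n_k$. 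The remaining verifications—minimality and the passage from limiting column vectors to genuine ergodic measures—are comparatively routine given the standard Oseledets/Choquet framework for IET renormalization, so I would defer their details to citations of Keane's original argument and present the matrix estimate in full.
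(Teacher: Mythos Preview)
The paper does not prove this theorem. It is stated with attribution to Keane \cite{nonue} and immediately followed only by the remark ``In particular he showed the limit exists. One can remove the assumption on $n_1$ or any finite number of matrices in Keane's Theorem.'' There is no argument in the paper to compare your proposal against; the result is simply imported from Keane's 1977 paper.

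That said, your outline is a faithful sketch of Keane's original strategy: his proof does proceed by showing that the nested images $\prod_{k\le r}\bar A_{m_k,n_k}(\Delta_3)$ collapse to a segment rather than a point, with the inequalities $3(n_k+1)\le m_k$ and $m_k\le\tfrac12(n_{k+1}+1)$ controlling respectively the contraction toward the $e_2$-direction and the separation of the $e_3$-direction. The inductive ratio estimates you anticipate are essentially the content of Lemmas~2 and~3 in \cite{nonue}, and the identification of the segment's endpoints as the ergodic measures follows as you say from extremality. So your plan is correct in spirit and matches the cited source, but there is nothing in the present paper itself to benchmark it against. If your intent was to supply a self-contained proof here, you would need to carry out the inductive estimate in full rather than defer it, since the paper as written treats the theorem as a black box and builds all of Section~\ref{estimates} on top of it.
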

 In particular he showed the limit exists. 
One can remove the assumption on $n_1$ or any finite number of matrices in Keane's Theorem.

\subsection{Estimates on the size of intervals with respect to the two ergodic measures} \label{estimates}

In this section we bound $\lambda_i(I_j^{(k)})$ between two constants. Many of these are needed in the later arguments. We include the rest for completeness.

In these computations, we use $j$th entry of partial products $\underset{t=1}{\overset{r}{\prod}}\bar{A}_{k+t}e_i$ to estimate $\frac{\lambda_i(I_j^{(k)})}{\lambda_i(I^{(k)})}$. To complete these estimates we remark that $b_{k,2}^{-1}>\lambda_2(I^{(k)})>(4b_{k,2})^{-1}$ (Lemma \ref{L2bigorbit}) and $b_{k,3}^{-1}>\lambda_3(I^{(k)})>(8b_{k,3})^{-1} $ (Lemma \ref{L3bigorbit}).
\begin{rem} The proofs of these lemmas often provide better results than their statements. Additionally, it is often straightforward to provide better estimates, especially under stronger growth conditions on $m_i$ and $n_i$. Lemma \ref{L2I2big}, for instance, would be amenable to such an approach.
\end{rem}
\begin{prop} $\frac{\lambda_i(I_j^{(k)})}{\lambda_i(I^{(k)})}=$ the $j$th entry of $ \underset{r \to \infty}{\lim}\underset{t=1}{\overset{r}{\prod}}\bar{A}_{m_{k+t},n_{k+t}}e_i$.
\end{prop}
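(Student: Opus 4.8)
The plan is to read $\lambda_i(I_j^{(k)})$ off the combinatorics of Section~1, feed Keane's theorem in at the top level, and then cancel the resulting cocycle matrix using its invertibility. Write $\Lambda_i^{(k)}=(\lambda_i(I_1^{(k)}),\dots,\lambda_i(I_4^{(k)}))^{\tau}$ for the column of subinterval masses, put $M=A_{m_1,n_1}\cdots A_{m_k,n_k}$, and let an overline denote division by the $\ell^1$-norm $|\cdot|_1$ (the sum of entries, as in the definition of $\bar A_{m,n}$). Each $I_j=I_j^{(0)}$ is tiled, up to a finite set, by the disjoint $T$-images comprising the sets $O(I_l^{(k)})$, with multiplicities recorded by the $j$th row of $M$. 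Since $\lambda_i$ is $T$-invariant every such image of $I_l^{(k)}$ has mass $\lambda_i(I_l^{(k)})$, so $\lambda_i(I_j)=e_j^{\tau}M\Lambda_i^{(k)}$; as vectors, $(\lambda_i(I_j))_j=M\Lambda_i^{(k)}$. As $\lambda_i([0,1))=1$ the left side is already a probability vector, hence equals $\overline{M\,\overline{\Lambda_i^{(k)}}}$ (the positive scalar $|\Lambda_i^{(k)}|_1$, finite and nonzero since $\lambda_i(I^{(k)})>0$, is absorbed by the normalization).

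Next I would invoke Keane's theorem twice. At the top level it identifies $(\lambda_i(I_j))_j$ with $\bar w_i^{(0)}:=\lim_r\prod_{t=1}^r \bar A_{m_t,n_t}e_i$. At level $k$ the induced map $T|_{I^{(k)}}$ is again a Keane IET with parameters $(m_{k+t},n_{k+t})_{t\ge1}$---discarding finitely many early matrices is harmless, as noted after Keane's theorem---so the limit $\bar w_i^{(k)}:=\lim_r\prod_{t=1}^r\bar A_{m_{k+t},n_{k+t}}e_i$ in the statement exists. The two are linked by factorization: since normalizing at each stage only rescales, $\prod_{t=1}^s\bar A_{m_t,n_t}e_i=\overline{M\cdot\prod_{t=k+1}^s\bar A_{m_t,n_t}e_i}$ for $s>k$; letting $s\to\infty$ and using continuity of $v\mapsto\overline{Mv}$ on the simplex---valid because the column sums of $M$ are the positive return times $b_{k,l}$, so $|Mv|_1>0$ there---gives $\bar w_i^{(0)}=\overline{M\,\bar w_i^{(k)}}$.

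Combining the two paragraphs yields $\overline{M\,\overline{\Lambda_i^{(k)}}}=\bar w_i^{(0)}=\overline{M\,\bar w_i^{(k)}}$, and the final step is to cancel $M$. A direct computation---subtract column $1$ from column $2$ and column $3$ from column $4$, leaving a matrix with a $2\times2$ block of determinant $\pm1$---gives $\det A_{m,n}=\pm1$, so $M$ is invertible. Then $\overline{Mu}=\overline{Mu'}$ for probability vectors forces $Mu=cMu'$ with $c>0$, whence $u=cu'$ and, comparing $\ell^1$-norms, $u=u'$. Applying this with $u=\overline{\Lambda_i^{(k)}}$ and $u'=\bar w_i^{(k)}$ gives $\overline{\Lambda_i^{(k)}}=\bar w_i^{(k)}$, whose $j$th entry is precisely $\lambda_i(I_j^{(k)})/\lambda_i(I^{(k)})$, as claimed.

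I expect the only genuine obstacle to be the indexing/matching: a priori one knows only that the normalized restriction of $\lambda_i$ to $I^{(k)}$ is \emph{some} ergodic measure of $T|_{I^{(k)}}$, and ruling out $\lambda_2\mapsto e_3$ and $\lambda_3\mapsto e_2$ would otherwise require an abstract correspondence of ergodic measures under inducing. The factorization-plus-invertibility device above sidesteps this entirely: it anchors the labeling $\lambda_i\leftrightarrow e_i$ at the top level, where Keane's theorem supplies it, and transports it down the invertible cocycle $M$ to pin down the correct index at every level $k$ simultaneously.
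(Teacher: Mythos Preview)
The paper states this proposition without proof, evidently regarding it as an immediate consequence of Keane's theorem applied to the induced system $T|_{I^{(k)}}$ (whose Keane matrices are $A_{m_{k+1},n_{k+1}},A_{m_{k+2},n_{k+2}},\dots$) together with the standard correspondence between ergodic measures of $T$ and of its induced maps. Your argument is correct and supplies exactly the details the paper leaves implicit.

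Where your write-up genuinely adds something is in the last step. The ``obvious'' route---restrict $\lambda_i$ to $I^{(k)}$, normalize, observe that this is ergodic for $T|_{I^{(k)}}$, and invoke Keane's theorem at level $k$---tells you only that $\overline{\Lambda_i^{(k)}}$ equals one of $\bar w_2^{(k)}$ or $\bar w_3^{(k)}$, and matching the index $i$ then requires a separate argument. Your factorization $\bar w_i^{(0)}=\overline{M\,\bar w_i^{(k)}}$ combined with $\det A_{m,n}=\pm 1$ (hence $M$ invertible) pins down the index cleanly: the labeling $\lambda_i\leftrightarrow e_i$ is fixed once at level $0$ by Keane's theorem and then transported down the invertible cocycle. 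This is a tidy way to handle the point, and it is not spelled out anywhere in the paper.

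One small remark: your continuity claim for $v\mapsto \overline{Mv}$ on the simplex only needs that the column sums $b_{k,l}=|Me_l|_1$ are strictly positive, which they are since they are return times; you note this, so the limiting step in the factorization is justified.
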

\begin{lem}\label{L3I2big} $\frac{\lambda_3(I_2^{(k)})}{\lambda_3(I^{(k)})} \geq \frac {m_{k+1}}{2n_{k+1} n_{k+2}}$.
\end{lem}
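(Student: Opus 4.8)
The plan is to pass to the normalized matrix products supplied by the preceding Proposition and to run a one–step recursion, using the explicit action of $A_{m,n}$ recorded above. Write $w^{(k)}=\lim_{r\to\infty}\prod_{t=1}^{r}\bar{A}_{m_{k+t},n_{k+t}}e_3$ for the probability vector whose $j$th entry is $\lambda_3(I_j^{(k)})/\lambda_3(I^{(k)})$, so that $w^{(k)}=\bar{A}_{m_{k+1},n_{k+1}}w^{(k+1)}$. Setting $D_{k+1}=|A_{m_{k+1},n_{k+1}}w^{(k+1)}|$ and using $\sum_j w^{(k+1)}_j=1$, the displayed product formula gives the exact identities
$$w^{(k)}_2=\frac{(m_{k+1}-1)w^{(k+1)}_1+m_{k+1}w^{(k+1)}_2}{D_{k+1}},\qquad w^{(k)}_1=\frac{w^{(k+1)}_3+w^{(k+1)}_4}{D_{k+1}},\qquad w^{(k)}_4=\frac{1}{D_{k+1}},$$
$$D_{k+1}=(n_{k+1}+1)+w^{(k+1)}_4+(m_{k+1}-1)w^{(k+1)}_1+m_{k+1}w^{(k+1)}_2.$$

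First I would isolate the ``source'' of $I_2$-mass coming from $I_1$ one level down. Since $x\mapsto x/(C+x)$ is increasing, dropping the nonnegative term $m_{k+1}w^{(k+1)}_2$ from numerator and denominator of $w^{(k)}_2$ simultaneously only decreases the ratio, so with $C=(n_{k+1}+1)+w^{(k+1)}_4$ and $P=(m_{k+1}-1)w^{(k+1)}_1$ one gets $w^{(k)}_2\ge P/(C+P)$. Now $w^{(k+1)}_1\le 1/D_{k+2}\le 1/(n_{k+2}+1)$ (its numerator is at most $1$ and $D_{k+2}\ge n_{k+2}+1$), and Keane's inequality $m_{k+1}\le\frac12(n_{k+2}+1)$ gives $P\le\frac12$; likewise $w^{(k+1)}_4=1/D_{k+2}\le 1/(n_{k+2}+1)$. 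Hence $C+P\le n_{k+1}+2$, while $w^{(k+1)}_1\ge w^{(k+2)}_3/D_{k+2}$ bounds $P$ from below, yielding
$$w^{(k)}_2\ \ge\ \frac{(m_{k+1}-1)\,w^{(k+2)}_3}{(n_{k+1}+2)\,D_{k+2}}.$$
Comparing with the target, it then suffices to check $2\,w^{(k+2)}_3\cdot\frac{m_{k+1}-1}{m_{k+1}}\cdot\frac{n_{k+1}}{n_{k+1}+2}\cdot\frac{n_{k+2}}{D_{k+2}}\ge 1$, and the factor $2$ leaves a comfortable margin.

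Two estimates close this. The lower bound on $w^{(k+2)}_3$ is routine: from $w^{(k+2)}_3=(n_{k+3}-w^{(k+3)}_3)/D_{k+3}\ge(n_{k+3}-1)/D_{k+3}$ and any reasonable upper bound on $D_{k+3}$ one sees $w^{(k+2)}_3$ is close to $1$. The real content is the upper bound $D_{k+2}\le(1+o(1))n_{k+2}$; by the formula for $D_{k+2}$ this is equivalent to showing that the excess $m_{k+2}w^{(k+2)}_2$ is small compared to $n_{k+2}$. I would control it through the orbit-counting inequality $\lambda_3(I_2^{(j)})\le 1/b_{j,2}$ (disjointness of $O(I_2^{(j)})$) together with Lemma \ref{L3bigorbit}, which reduce the claim to the return-time comparison $b_{j,2}\gtrsim (m_j/n_j)\,b_{j,3}$ — the statement that the tower over $I_2^{(j)}$ is much taller than the one over $I_3^{(j)}$, which is precisely why $\lambda_3$ charges $I_2$ so little.

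The main obstacle is exactly this last comparison. Note that $m_jw^{(j)}_2$ is \emph{not} bounded in $j$: the recursion $m_jw^{(j)}_2\le\tfrac14+\tfrac12\,m_{j+1}w^{(j+1)}_2$ (read off from the identities above with $m_j/D_{j+1}\le\tfrac12$) forces it to grow, roughly doubling, so the point is not boundedness but that it is $o(n_j)$. This is where the full strength of Keane's growth hypothesis enters: because $n_{j+1}\ge 6n_j+5$, the sequence $n_j$ outpaces the at-most-geometric growth of $m_jw^{(j)}_2$, and the ratio $m_jw^{(j)}_2/n_j$ decays. Making this rigorous with the constant $2$ amounts to showing $\beta_j:=b_{j,2}/b_{j,3}$ is both bounded below (it is increasing, so $\beta_j\ge\beta_1$) and small relative to $n_{j+1}$; the only delicate point is that the constants in Lemma \ref{L3bigorbit} as stated are slightly lossy, so I would instead invoke the sharper estimate its proof actually supplies, as anticipated in the Remark following the $b$-bounds.
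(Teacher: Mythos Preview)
Your approach is far more elaborate than the paper's: the paper simply notes that it suffices to check that the second entry of $\bar{A}_{m_{k+1},n_{k+1}}\bar{A}_{m_{k+2},n_{k+2}}e_3$ exceeds $m_{k+1}/(2n_{k+1}n_{k+2})$, which is a direct computation with two matrices. Your recursion can be made to work, but as written it contains a genuine error. You assert that $m_j w^{(j)}_2$ is unbounded, arguing that the inequality $m_jw^{(j)}_2\le\tfrac14+\tfrac12\,m_{j+1}w^{(j+1)}_2$ ``forces it to grow, roughly doubling.'' This is backwards: the displayed relation is an \emph{upper} bound, and iterating an upper bound cannot force growth. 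In fact the sequence is bounded by $\tfrac12$: Lemma~\ref{L3I2small} (quoted from Keane, hence available here) gives $w^{(j)}_2 \le 2m_{j+1}/\bigl((n_{j+1}+1)(n_{j+2}+1)\bigr)$, and combining with $2m_{j+1}\le n_{j+2}+1$ yields $w^{(j)}_2 \le 1/(n_{j+1}+1)$, so $m_jw^{(j)}_2\le m_j/(n_{j+1}+1)\le\tfrac12$. This single line gives $D_{k+2}\le n_{k+2}+3$ and closes your argument cleanly with the stated constant~$2$.

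Your proposed workaround through $\lambda_3(I_2^{(j)})\le 1/b_{j,2}$ and Lemma~\ref{L3bigorbit} does not recover the constant. The factor $8$ in Lemma~\ref{L3bigorbit} yields only $w^{(k+2)}_2\le 8\,b_{k+2,3}/b_{k+2,2}$, and the return-time comparison $b_{j,2}\gtrsim (m_j/n_j)b_{j,3}$ then gives $m_{k+2}w^{(k+2)}_2=O(n_{k+2})$, not $o(n_{k+2})$. This produces $D_{k+2}\le Cn_{k+2}$ with $C$ on the order of $9$, so your lower bound for $w^{(k)}_2$ becomes roughly $m_{k+1}/(9\,n_{k+1}n_{k+2})$, missing the target by a factor of about $4.5$. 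Invoking unspecified ``sharper estimates'' from the proof of Lemma~\ref{L3bigorbit} is not a proof; the clean route is the one-line application of Lemma~\ref{L3I2small} above, or better still the paper's two-matrix computation, which avoids forward references altogether.
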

\begin{proof} It suffices to show that the second entry of $\bar{A}_{m_{k+1},n_{k+1}}\bar{A}_{m_{k+2},n_{k+2}}e_3>\frac {m_{k+1}}{2n_{k+1} n_{k+2}}$. 
This is a direct computation.
\end{proof}
\begin{lem} \label{L3I2small} $\frac{\lambda_3(I_2^{(k)})}{\lambda_3(I^{(k)})} \leq \frac {2m_{k+1}} {(n_{k+2}+1)(n_{k+1} +1)}$.
\end{lem}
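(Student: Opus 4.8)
The plan is to use the Proposition to reduce the statement to a single scalar estimate on the level-$(k+1)$ distribution of $\lambda_3$, and then to prove that scalar estimate by an induction on the depth of the defining matrix products.

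First I would write $v=(a,b,c,d)$ for the $\lambda_3$-distribution of $I^{(k+1)}$, that is $v=\lim_{r}\prod_{t=2}^{r}\bar{A}_{m_{k+t},n_{k+t}}e_3$, which is a probability vector, so that by the Proposition $\frac{\lambda_3(I_2^{(k)})}{\lambda_3(I^{(k)})}$ is the second entry of $\bar{A}_{m_{k+1},n_{k+1}}v$. Since the second row of $A_{m,n}$ is $(m-1,m,0,0)$ and the column sums of $A_{m,n}$ are $m+n,\ m+n+1,\ n+1,\ n+2$ (minimum $n+1$), this second entry equals $\frac{(m_{k+1}-1)a+m_{k+1}b}{D}$, where $D=|A_{m_{k+1},n_{k+1}}v|\ge(n_{k+1}+1)|v|=n_{k+1}+1$. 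Bounding the numerator by $m_{k+1}(a+b)$ then reduces the whole lemma to proving the inequality $a+b\le\frac{2}{n_{k+2}+1}$.

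The remaining task is therefore to control $s_j:=\frac{\lambda_3(I_1^{(j)})+\lambda_3(I_2^{(j)})}{\lambda_3(I^{(j)})}$, and I would show $s_j\le\frac{2}{n_{j+1}+1}$ for every $j$ (the case $j=k+1$ being what is needed). I would prove this by induction on the truncation depth of the products: writing $v_j^{(r)}=\prod_{t=1}^{r}\bar{A}_{m_{j+t},n_{j+t}}e_3$ and $s_j^{(r)}$ for the sum of its first two entries, the base case $r=0$ is immediate since $e_3$ has vanishing first two coordinates. For the inductive step I use $v_j^{(r)}=\bar{A}_{m_{j+1},n_{j+1}}v_{j+1}^{(r-1)}$; setting $(a',b',c',d')=v_{j+1}^{(r-1)}$, $u=m_{j+1}s_{j+1}^{(r-1)}=m_{j+1}(a'+b')$ and $w=1-s_{j+1}^{(r-1)}=c'+d'$, and using the refined column bound $D\ge m_{j+1}(a'+b')+(n_{j+1}+1)(c'+d')$, I get $s_j^{(r)}\le\frac{u+w}{u+(n_{j+1}+1)w}$, which is $\le\frac{2}{n_{j+1}+1}$ exactly when $(n_{j+1}-1)u\le(n_{j+1}+1)w$. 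Letting $r\to\infty$ yields $s_j\le\frac{2}{n_{j+1}+1}$.

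The heart of the matter, and where the growth hypotheses enter, is this last inequality $(n_{j+1}-1)u\le(n_{j+1}+1)w$. The inductive hypothesis $s_{j+1}^{(r-1)}\le\frac{2}{n_{j+2}+1}$ together with the upper bound $m_{j+1}\le\frac12(n_{j+2}+1)$ forces $u=m_{j+1}s_{j+1}^{(r-1)}\le 1$, while the same hypothesis gives $w\ge 1-\frac{2}{n_{j+2}+1}$; the required inequality then collapses to $n_{j+2}\ge n_{j+1}$, which holds because the two-sided growth conditions make $(n_j)$ strictly increasing. I expect this cross-level bookkeeping to be the main obstacle: the bound at level $j$ is only self-improving once it is coupled to level $j+1$ through the product $u=m_{j+1}s_{j+1}$, and it is precisely the upper bound $m_{j+1}\le\frac12(n_{j+2}+1)$ that keeps $u$ bounded by $1$ and closes the induction. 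Verifying the boundary constants in the column sums and the elementary manipulation of $\frac{u+w}{u+(n_{j+1}+1)w}$ is the only routine point that remains.
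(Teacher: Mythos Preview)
Your argument is correct. The reduction to the scalar bound $a+b\le\frac{2}{n_{k+2}+1}$ via the column-sum estimate $|A_{m_{k+1},n_{k+1}}v|\ge(n_{k+1}+1)|v|$ is clean, and the induction on the truncation depth $r$ closes exactly as you say: with $u=m_{j+1}s_{j+1}^{(r-1)}\le\frac{2m_{j+1}}{n_{j+2}+1}\le1$ (using $m_{j+1}\le\tfrac12(n_{j+2}+1)$) and $w\ge\frac{n_{j+2}-1}{n_{j+2}+1}$, the inequality $(n_{j+1}-1)u\le(n_{j+1}+1)w$ is indeed equivalent to $n_{j+1}\le n_{j+2}$, which the two-sided Keane growth conditions force (in fact $n_{j+2}\ge 6n_{j+1}+5$).

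The paper does not give its own proof of this lemma; it simply records that the bound is contained in the proof of Lemma~3 of Keane's paper \cite{nonue}. Your write-up therefore supplies a self-contained argument where the paper defers to the literature. The route you take---controlling the sum of the first two coordinates of the partial products by an induction that feeds the level-$(j{+}1)$ estimate back into level $j$ through the factor $m_{j+1}$---is precisely the mechanism behind Keane's original estimate, so you have effectively reconstructed that proof. One minor comment: in passing to the limit $r\to\infty$ you are implicitly using the existence of $\lim_r\prod_{t}\bar A_{m_{k+t},n_{k+t}}e_3$, which the paper records as part of Keane's theorem; it would do no harm to note this explicitly.
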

This result is in the proof of Lemma 3 of \cite{nonue}.
\begin{lem} \label{L3I3big} $\frac{\lambda_3(I_3^{(k)})}{\lambda_3(I^{(k)})} \geq 1- \frac {3} {n_{k+1}}$.
\end{lem}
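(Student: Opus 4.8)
The plan is to reduce the inequality to a one-step computation via the Proposition. By the Proposition, $\frac{\lambda_3(I_3^{(k)})}{\lambda_3(I^{(k)})}$ is the third entry of $w:=\lim_{r\to\infty}\prod_{t=1}^r \bar{A}_{m_{k+t},n_{k+t}}e_3$. Since each $\bar{A}_{m,n}$ merely rescales, this product is $A_{m_{k+1},n_{k+1}}\cdots A_{m_{k+r},n_{k+r}}e_3$ renormalized to have entry-sum $1$, and peeling off the outermost factor gives $w=\bar{A}_{m_{k+1},n_{k+1}}w'$, where $w'=(a,b,c,d)^{\tau}$ is again a probability vector. By the Proposition applied at level $k+1$, $w'$ is the vector of ratios $\frac{\lambda_3(I_j^{(k+1)})}{\lambda_3(I^{(k+1)})}$, so $a,b,c,d$ are exactly these level-$(k+1)$ masses. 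Using $A_{m,n}(a,b,c,d)^{\tau}=(c+d,(m-1)a+mb,n-c,1)^{\tau}$ when $a+b+c+d=1$, and substituting $c+d=1-a-b$, a direct computation yields
\[
1-\frac{\lambda_3(I_3^{(k)})}{\lambda_3(I^{(k)})}=\frac{2+(m_{k+1}-2)a+(m_{k+1}-1)b}{n_{k+1}+1+d+(m_{k+1}-1)a+m_{k+1}b}.
\]

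The denominator is at least $n_{k+1}+1>n_{k+1}$, so it suffices to bound the numerator by $3$, i.e.\ to show $(m_{k+1}-2)a+(m_{k+1}-1)b\le 1$, for which it is enough that $m_{k+1}a\le\frac12$ and $m_{k+1}b\le\frac12$. First I would bound $a$. Since $a$ is the first entry of $\bar{A}_{m_{k+2},n_{k+2}}$ applied to a probability vector, and the first entry of $A_{m,n}(a',b',c',d')^{\tau}$ equals $c'+d'\le 1$ while the entries sum to at least $n_{k+2}+1$, we get $a\le\frac{1}{n_{k+2}+1}$. For $b$ I would invoke Lemma \ref{L3I2small} at level $k+1$, which gives $b\le\frac{2m_{k+2}}{(n_{k+3}+1)(n_{k+2}+1)}$; combined with the hypothesis $m_{k+2}\le\frac12(n_{k+3}+1)$ this yields $b\le\frac{1}{n_{k+2}+1}$ as well. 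Then the growth condition $m_{k+1}\le\frac12(n_{k+2}+1)$ converts both estimates into $m_{k+1}a\le\frac12$ and $m_{k+1}b\le\frac12$, so the numerator is at most $3$ and $1-\frac{\lambda_3(I_3^{(k)})}{\lambda_3(I^{(k)})}\le\frac{3}{n_{k+1}}$, which is the claim.

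The only genuinely delicate point is the term $m_{k+1}b$ (and to a lesser extent $m_{k+1}a$): a priori $m_{k+1}$ is \emph{large} --- indeed $m_{k+1}\ge 3(n_{k+1}+1)$ --- so the second coordinate could in principle swamp the bound and keep the third entry far from $1$. The whole argument hinges on the fact that the level-$(k+1)$ mass on $I_1^{(k+1)}$ and $I_2^{(k+1)}$ decays like $1/n_{k+2}$ whereas $m_{k+1}$ grows only like $n_{k+2}$, so that the products $m_{k+1}a$ and $m_{k+1}b$ stay bounded. This is precisely where the two-sided spacing hypothesis $3(n_k+1)\le m_k\le\frac12(n_{k+1}+1)$ enters, and it is the step I expect to require the most care to state cleanly.
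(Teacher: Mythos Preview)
Your argument is correct. The paper itself does not prove this lemma at all --- it simply cites Keane, writing ``This is Lemma 3 of \cite{nonue}.'' So there is no ``paper's own proof'' to compare against; what you have supplied is a self-contained reconstruction within the framework the paper has already set up.

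A couple of remarks. First, your computation of $A_{m,n}(a,b,c,d)^{\tau}$ and the resulting expression for $1-w[3]$ are correct; the denominator is indeed $n_{k+1}+1+d+(m_{k+1}-1)a+m_{k+1}b\ge n_{k+1}+1$, so in fact you obtain the slightly sharper bound $\frac{3}{n_{k+1}+1}$. Second, your appeal to Lemma~\ref{L3I2small} for the bound on $b$ is legitimate in the paper's ordering (it precedes Lemma~\ref{L3I3big}) and is itself extracted from Keane's proof of the same result, so your reconstruction is very likely close in spirit to Keane's original argument. The bound on $a$ you derive directly is essentially Lemma~\ref{L3I1small}, which appears \emph{after} the present lemma in the paper; you were right not to cite it forward and instead give the one-line matrix estimate.

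The point you flag as delicate --- that $m_{k+1}$ is large but $a,b$ are of order $1/n_{k+2}$, and the two-sided condition $m_{k+1}\le\tfrac12(n_{k+2}+1)$ is exactly what makes the products $m_{k+1}a$, $m_{k+1}b$ bounded --- is indeed the heart of the matter, and you have handled it cleanly.
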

This is Lemma 3 of \cite{nonue}.
\begin{lem} \label{L3I4small} $\frac{\lambda_3(I_4^{(k)})}{\lambda_3(I^{(k)})} \leq \frac {1} {n_{k+1}}$.
\end{lem}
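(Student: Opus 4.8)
The plan is to bound the ratio $\frac{\lambda_3(I_4^{(k)})}{\lambda_3(I^{(k)})}$ by estimating the fourth entry of the appropriate product of normalized matrices. By the Proposition, this ratio equals the fourth entry of $\lim_{r\to\infty}\prod_{t=1}^r \bar{A}_{m_{k+t},n_{k+t}}e_3$. So it suffices to control how much mass ends up in the fourth coordinate after applying a single normalized matrix $\bar{A}_{m_{k+1},n_{k+1}}$ to whatever probability vector arises from the tail product $\prod_{t=2}^r \bar{A}_{m_{k+t},n_{k+t}}e_3$.

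First I would observe, from the explicit product
\begin{center}$A_{m,n}(a,b,c,d)^{\tau} = (c+d,\ (m-1)a+mb,\ n(a+b+c+d)-c,\ a+b+c+d)^{\tau}$\end{center}
computed in the text, that the fourth entry of $A_{m,n}v$ is exactly $a+b+c+d=|v|_1$, while the total sum of all four entries is $|A_{m,n}v|_1 = (c+d)+((m-1)a+mb)+(n|v|_1-c)+|v|_1$. For a probability vector $v$ (so $|v|_1=1$) this denominator is at least $n_{k+1}+1$, since the third entry alone contributes $n_{k+1}|v|_1 - c \geq n_{k+1}-1$ (as $c\le 1$) and the fourth entry contributes $1$, giving a total of at least $n_{k+1}$ just from these two rows, and in fact the cleanest bound comes from noting the numerator (fourth entry) is $|v|_1=1$ while the denominator exceeds $n_{k+1}$. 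Thus the fourth entry of $\bar{A}_{m_{k+1},n_{k+1}}w$ for any probability vector $w$ is $\frac{1}{|A_{m_{k+1},n_{k+1}}w|_1}\le \frac{1}{n_{k+1}}$.

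The key simplification is that the fourth row of $A_{m,n}$ is $(1,1,1,1)$, so the fourth entry of $A_{m,n}w$ is simply the $\ell^1$-norm of $w$ and the normalization immediately produces the reciprocal of the full column sum. Since the third-row contribution forces the column sum above $n_{k+1}$, the estimate $\frac{1}{n_{k+1}}$ follows directly without needing to track the limiting vector in detail; the only care needed is to confirm the bound survives passage to the limit as $r\to\infty$, which it does because the first factor $\bar{A}_{m_{k+1},n_{k+1}}$ is fixed and the argument is a genuine probability vector at every stage.

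I do not expect a serious obstacle here, as this is essentially the companion lower-bound to Lemma \ref{L3I3big} and a one-line matrix computation. The only subtlety worth stating carefully is ensuring the denominator bound $|A_{m_{k+1},n_{k+1}}w|_1 > n_{k+1}$ holds uniformly over all probability vectors $w$ that can arise, which is immediate once one checks the third entry $n_{k+1}-c$ stays positive and the fourth entry equals $1$; since Keane's growth conditions give $n_{k+1}$ large, the strict inequality is comfortable. Hence the result follows as a direct computation.
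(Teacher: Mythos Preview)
Your argument is correct and is essentially the same as the paper's, just phrased in terms of matrix entries rather than combinatorially: the paper notes that $I_4^{(k)}$ consists of one image of each $I_j^{(k+1)}$ (the fourth row of $A_{m,n}$ is $(1,1,1,1)$) while $I^{(k)}$ contains at least $n_{k+1}+1$ images of each, which is exactly your observation that the numerator is $|w|_1$ and the denominator is at least $n_{k+1}+1$.
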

\begin{proof} 
Notice that $I_4^{(k)}$ is the disjoint union of an image of $I_1^{(k+1)}$, an image of $I_2^{(k+1)}$, an image of $I_3^{(k+1)}$ and an image of $I_4^{(k+1)}$ and that $I^{(k)}$ contains at least $n_{k+1}+1$ disjoint images of $I_j^{(k+1)}$ for each $j$.
\end{proof}
\begin{lem}\label{L3I4big} $\frac{\lambda_3(I_4^{(k)})}{\lambda_3(I^{(k)})} \geq \frac {1} {2n_{k+1}}$.
\end{lem}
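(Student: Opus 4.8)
The plan is to read the ratio off the normalized matrix products of the Proposition and bound it by a short direct computation. Write $v=(a,b,c,d)$ for the probability vector whose $j$th coordinate is $\frac{\lambda_3(I_j^{(k+1)})}{\lambda_3(I^{(k+1)})}$; by the Proposition this is $\lim_{r\to\infty}\prod_{t=1}^{r}\bar{A}_{m_{k+1+t},n_{k+1+t}}e_3$, and the corresponding level-$k$ vector is $\bar{A}_{m_{k+1},n_{k+1}}v$, whose fourth coordinate is exactly $\frac{\lambda_3(I_4^{(k)})}{\lambda_3(I^{(k)})}$. The key observation is that the bottom row of every $A_{m,n}$ is $(1,1,1,1)$, so the fourth entry of $A_{m_{k+1},n_{k+1}}v$ equals $a+b+c+d=1$. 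Hence
\[
\frac{\lambda_3(I_4^{(k)})}{\lambda_3(I^{(k)})}=\frac{1}{\,|A_{m_{k+1},n_{k+1}}v|\,},
\]
and it suffices to prove $|A_{m_{k+1},n_{k+1}}v|\le 2n_{k+1}$.

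Using $a+b+c+d=1$ to sum the four entries gives $|A_{m_{k+1},n_{k+1}}v|=n_{k+1}+1+(m_{k+1}-1)a+m_{k+1}b+d$, so the problem reduces to showing $(m_{k+1}-1)a+m_{k+1}b+d\le n_{k+1}-1$. The naive bound $(m_{k+1}-1)a+m_{k+1}b+d\le m_{k+1}$ is useless, since Keane's condition forces $m_{k+1}\ge 3(n_{k+1}+1)$, far larger than $n_{k+1}$. The content of the lemma is that the large coefficient $m_{k+1}$ multiplies only the \emph{negligible} coordinates $a$ and $b$. Indeed, applying Lemma \ref{L3I3big} at level $k+1$ gives $c\ge 1-\frac{3}{n_{k+2}}$, whence $a+b+d\le\frac{3}{n_{k+2}}$, and therefore $(m_{k+1}-1)a+m_{k+1}b+d\le m_{k+1}(a+b+d)\le \frac{3m_{k+1}}{n_{k+2}}$.

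It remains to convert $\frac{3m_{k+1}}{n_{k+2}}$ into an $O(1)$ quantity, and this is exactly what the \emph{other} half of Keane's two-sided constraint supplies: from $m_{k+1}\le\frac12(n_{k+2}+1)$ one gets $\frac{3m_{k+1}}{n_{k+2}}\le\frac{3(n_{k+2}+1)}{2n_{k+2}}<2$, since $n_{k+2}>n_1>9$. Thus $|A_{m_{k+1},n_{k+1}}v|\le n_{k+1}+3\le 2n_{k+1}$ as soon as $n_{k+1}\ge 3$, which holds throughout, and the bound follows. The main obstacle is conceptual rather than computational: one must resist estimating $m_{k+1}$ crudely and instead recognize that Keane's lower bound $3(n_{k+1}+1)\le m_{k+1}$ (which makes $m_{k+1}$ enormous) is harmless precisely because the only coordinates it weights are the ones controlled by Lemma \ref{L3I3big}, while his upper bound $m_{k+1}\le\frac12(n_{k+2}+1)$ then closes the estimate.
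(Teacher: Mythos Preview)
Your proof is correct, but it takes a genuinely different route from the paper's. The paper argues combinatorially: since $I_4^{(k)}$ contains one image of each $I_i^{(k+1)}$ while $I_3^{(k)}$ contains at most $n_{k+1}$ images of each, one has $\lambda_3(I_4^{(k)})\ge \frac{1}{n_{k+1}}\lambda_3(I_3^{(k)})$; then Lemma~\ref{L3I3big} \emph{at level $k$} gives $\frac{\lambda_3(I_3^{(k)})}{\lambda_3(I^{(k)})}\ge 1-\frac{3}{n_{k+1}}\ge\frac12$, and the result follows. This counting argument never touches $m_{k+1}$ at all.

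By contrast, you compute $|A_{m_{k+1},n_{k+1}}v|$ explicitly, apply Lemma~\ref{L3I3big} at level $k+1$ to suppress the coordinates $a,b,d$, and then invoke Keane's \emph{upper} constraint $m_{k+1}\le\frac12(n_{k+2}+1)$ to kill the factor $m_{k+1}$. Both arguments lean on Lemma~\ref{L3I3big}, but at different levels, and the paper's version is strictly more robust: it would still give the bound even if $m_{k+1}$ were allowed to be arbitrarily large relative to $n_{k+2}$, whereas your argument genuinely needs the two-sided Keane inequality. Your approach, on the other hand, illustrates nicely how the matrix formalism of the Proposition can be used mechanically, and makes transparent exactly which of Keane's constraints is doing the work.
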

\begin{proof} $I_4^{(k)}$ is made up of one disjoint image of each $I_i^{(k+1)}$. $I_3^{(k)}$ is made up of $n_{k+1}-1$ disjoint images of $I_3^{(k+1)}$ and $n_{k+1}$ disjoint images of each of the other $I_i^{(k+1)}$. Therefore, because $n_{k+1}$ images of $I_4^{(k+1)}$ cover $I_3^{(k+1)}$ and ${\frac {\lambda_3(I_4^{(k)})}{\lambda_3(I^{(k)})}> \frac{\lambda_3(I_3^{(k)})}{\lambda_3(I^{(k)})} \frac 1 {n_{k+1}}}$. The lemma follows by Lemma \ref{L3I3big}.
\end{proof}
\begin{lem} \label {L3I1small}$\frac{\lambda_3(I_1^{(k)})}{\lambda_3(I^{(k)})} \leq \frac {1} {n_{k+1}}$.
\end{lem}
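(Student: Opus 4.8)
The plan is to bound $I_1^{(k)}$ from above by $I_4^{(k)}$ and then invoke Lemma \ref{L3I4small}, which is already established. The starting point is the combinatorial description of $I_1^{(k)}$ in terms of pieces of $I^{(k+1)}$: the first row of $A_{m_{k+1},n_{k+1}}$ is $(0,0,1,1)$, so $I_1^{(k)}$ is the disjoint union of a single $T$-image of $I_3^{(k+1)}$ and a single $T$-image of $I_4^{(k+1)}$. Equivalently, as recorded in the first section, $I_1^{(k)}=T(I_4^{(k+1)}\cup I_3^{(k+1)})$.

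First I would use the $T$-invariance of $\lambda_3$ to pass from these images back to the subintervals themselves, giving
$$\lambda_3(I_1^{(k)})=\lambda_3(I_3^{(k+1)})+\lambda_3(I_4^{(k+1)})=\lambda_3\big(I_3^{(k+1)}\cup I_4^{(k+1)}\big).$$
Next, since $I_3^{(k+1)}$ and $I_4^{(k+1)}$ are disjoint subintervals of $I^{(k+1)}$, and $I^{(k+1)}=I_4^{(k)}$ by definition, the right-hand side is at most $\lambda_3(I^{(k+1)})=\lambda_3(I_4^{(k)})$. Dividing by $\lambda_3(I^{(k)})$ and applying Lemma \ref{L3I4small} then yields
$$\frac{\lambda_3(I_1^{(k)})}{\lambda_3(I^{(k)})}\le\frac{\lambda_3(I_4^{(k)})}{\lambda_3(I^{(k)})}\le\frac{1}{n_{k+1}},$$
which is the claim.

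There is essentially no obstacle here: once one notices that $I_1^{(k)}$ embeds, up to the measure-preserving map $T$, inside $I^{(k+1)}=I_4^{(k)}$, the bound is inherited directly from the companion estimate for $I_4^{(k)}$. The only point requiring a moment's care is confirming that the two images constituting $I_1^{(k)}$ come from \emph{disjoint} subintervals, so that their $\lambda_3$-measures genuinely sum to at most $\lambda_3(I^{(k+1)})$; this is immediate from the first row of the matrix, whose nonzero entries are two distinct $1$'s sitting in the $I_3^{(k+1)}$ and $I_4^{(k+1)}$ columns.
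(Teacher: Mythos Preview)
Your proof is correct. The approach differs slightly from the paper's: rather than bounding $\lambda_3(I_1^{(k)})\le\lambda_3(I_4^{(k)})$ and invoking Lemma~\ref{L3I4small}, the paper argues directly that each of $I_3^{(k+1)}$ and $I_4^{(k+1)}$ has at least $n_{k+1}+1$ disjoint images inside $I^{(k)}$ (from the third and fourth column sums of $A_{m_{k+1},n_{k+1}}$), so the ratio is at most $1/(n_{k+1}+1)$. Your route is marginally cleaner in that it reuses the already-proved Lemma~\ref{L3I4small} instead of repeating the counting, while the paper's direct count in fact yields the slightly sharper bound $1/(n_{k+1}+1)$; either way the argument is a one-line consequence of the first row of the matrix.
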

\begin{proof} $I_1^{(k)}$ is made up of a disjoint union of an image of $I_3^{(k+1)}$ and $I_4^{(k+1)}$ each of which has at least $n_{k+1}+1$ disjoint images in $I^{(k)}$.
\end{proof}
\begin{lem}\label{L3I1big} $\frac{\lambda_3(I_1^{(k)})}{\lambda_3(I^{(k)})} \geq \frac {1} {3n_{k+1}}$.
\end{lem}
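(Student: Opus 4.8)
The plan is to bound $\lambda_3(I_1^{(k)})$ from below by the $\lambda_3$-measure of a single level-$(k+1)$ piece sitting inside it, and then to track how large that piece is as a fraction of $I^{(k)}$. As recorded in the proof of Lemma \ref{L3I1small}, the first row of $A_{m_{k+1},n_{k+1}}$ (namely $(0,0,1,1)$) shows that $I_1^{(k)}$ is the disjoint union of a single $T$-image of $I_3^{(k+1)}$ and a single $T$-image of $I_4^{(k+1)}$. Since $\lambda_3$ is $T$-invariant these images carry the same mass as the originals, so $\lambda_3(I_1^{(k)}) = \lambda_3(I_3^{(k+1)}) + \lambda_3(I_4^{(k+1)}) \geq \lambda_3(I_3^{(k+1)})$. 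Thus it suffices to show that $I_3^{(k+1)}$ is a reasonably large fraction of $I^{(k)}$.

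Next I would exploit the two facts that make $I_3^{(k+1)}$ large. First, $I^{(k+1)}$ is by construction the fourth subinterval $I_4^{(k)}$ of $I^{(k)}$, so $\lambda_3(I^{(k+1)}) = \lambda_3(I_4^{(k)})$. Applying Lemma \ref{L3I3big} one level up gives $\lambda_3(I_3^{(k+1)}) \geq (1-\tfrac{3}{n_{k+2}})\lambda_3(I^{(k+1)})$, while Lemma \ref{L3I4big} at level $k$ gives $\lambda_3(I_4^{(k)}) \geq \tfrac{1}{2n_{k+1}}\lambda_3(I^{(k)})$. Chaining the three inequalities yields
$$\frac{\lambda_3(I_1^{(k)})}{\lambda_3(I^{(k)})} \;\geq\; \left(1-\frac{3}{n_{k+2}}\right)\frac{1}{2n_{k+1}}.$$

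To finish, I invoke the growth hypotheses on the $n_i$, which guarantee $n_{k+2}\geq 9$ and hence $1-\tfrac{3}{n_{k+2}}\geq \tfrac 23$; the right-hand side is then at least $\tfrac 23\cdot\tfrac{1}{2n_{k+1}}=\tfrac{1}{3n_{k+1}}$, which is exactly the claim. I do not anticipate a genuine obstacle here: the only care needed is the bookkeeping identity $I^{(k+1)}=I_4^{(k)}$ and the application of Lemmas \ref{L3I3big} and \ref{L3I4big} at the correct indices. The constant $3$ in the stated denominator is precisely what the worst case $n_{k+2}=9$ produces, so any honest growth in the $n_i$ leaves the bound with room to spare.
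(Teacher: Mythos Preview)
Your proof is correct and follows essentially the same route as the paper's own argument: bound $\lambda_3(I_1^{(k)})$ below by $\lambda_3(I_3^{(k+1)})$ using the composition of $I_1^{(k)}$, then combine Lemma~\ref{L3I3big} at level $k+1$ with Lemma~\ref{L3I4big} at level $k$ via the identification $I^{(k+1)}=I_4^{(k)}$. The paper's proof is simply the terse two-line version of what you wrote out in full, including the final numerical check that $(1-\tfrac{3}{n_{k+2}})\cdot\tfrac{1}{2n_{k+1}}\geq\tfrac{1}{3n_{k+1}}$.
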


\begin{proof} It follows from the composition of $I_i^{(k)}$ by subintervals of $I^{(k+1)}$ that $\lambda_3(I_1^{(k)}) \geq \lambda_3(I_3^{(k+1)})$. The proof follows from Lemmas \ref{L3I4big} and \ref{L3I3big}.
\end{proof}
\begin{lem}\label{L2I2big} $\frac {\lambda_2(I_2^{(k)})}{\lambda_2(I^{(k)})}> \frac { m_{k+1}}{4(n_{k+1}+m_{k+1}+2)}$.
\end{lem}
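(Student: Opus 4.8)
The plan is to pass, via the Proposition, to the probability vectors it produces and to run everything through the action of a single matrix. Write
\[
x^{(k)}=\bigl(x^{(k)}_1,x^{(k)}_2,x^{(k)}_3,x^{(k)}_4\bigr)=\lim_{r\to\infty}\prod_{t=1}^{r}\bar A_{m_{k+t},n_{k+t}}e_2,
\]
so that $x^{(k)}_j=\lambda_2(I_j^{(k)})/\lambda_2(I^{(k)})$ and, peeling off the outermost factor (using continuity of $\bar A_{m,n}$), $x^{(k)}=\bar A_{m_{k+1},n_{k+1}}x^{(k+1)}$. Abbreviating $m=m_{k+1}$, $n=n_{k+1}$ and $(a,b,c,d)=x^{(k+1)}$, expanding $A_{m,n}(a,b,c,d)^{\tau}$ and using $a+b+c+d=1$ gives the total $(m-1)a+mb+d+n+1$ and hence the second coordinate
\[
x^{(k)}_2=\frac{(m-1)a+mb}{(m-1)a+mb+d+n+1}.
\]
Bounding the numerator below by $mb$ and the denominator above by $m+n+2$ (since $(m-1)a+mb\le m$ and $d\le1$) yields the one-step recursion $x^{(k)}_2\ge \tfrac{m_{k+1}}{m_{k+1}+n_{k+1}+2}\,x^{(k+1)}_2$; this is the move that produces the precise shape of the claimed bound. (Equivalently, it is $\lambda_2(I_2^{(k)})=(m_{k+1}-1)\lambda_2(I_1^{(k+1)})+m_{k+1}\lambda_2(I_2^{(k+1)})$ weighed against $\lambda_2(I^{(k)})=\sum_j b_{k+1,j}\lambda_2(I_j^{(k+1)})$ with all return times $b_{k+1,j}\le m_{k+1}+n_{k+1}+2$.)

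The main obstacle is the uniform lower bound $x^{(k+1)}_2\ge\tfrac14$, and it is here that the growth hypothesis $m_{k+1}\ge 3(n_{k+1}+1)$ is indispensable. Iterating the recursion is hopeless, since each factor $m/(m+n+2)$ sits a definite distance below $1$ (it tends to $3/4$ as the indices grow), so the product over a tail collapses to $0$. Instead I would prove that the region $\{b\ge\tfrac14\}$ is forward invariant under every $\bar A_{m,n}$ with $m\ge 3(n+1)$ and $n\ge1$. From the displayed formula, writing the image second coordinate as $b'$, the inequality $b'\ge\tfrac14$ is equivalent to $3((m-1)a+mb)\ge d+n+1$; since $b\ge\tfrac14$ forces $(m-1)a+mb\ge mb\ge \tfrac{m}{4}\ge\tfrac34(n+1)$, the left side is at least $\tfrac94(n+1)\ge n+2\ge d+n+1$. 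As $e_2=(0,1,0,0)$ lies in $\{b\ge\tfrac14\}$ and every factor $\bar A_{m_{k+t},n_{k+t}}$ meets the hypotheses, each partial product $\prod_{t=1}^{r}\bar A_{m_{k+t},n_{k+t}}e_2$ has second coordinate $\ge\tfrac14$, and passing to the limit (a closed condition) gives $x^{(k)}_2\ge\tfrac14$ for every $k$.

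Combining the two steps gives
\[
\frac{\lambda_2(I_2^{(k)})}{\lambda_2(I^{(k)})}=x^{(k)}_2\ \ge\ \frac{m_{k+1}}{m_{k+1}+n_{k+1}+2}\cdot\frac14=\frac{m_{k+1}}{4(n_{k+1}+m_{k+1}+2)},
\]
exactly the stated form. Strictness is then automatic, as the invariance of the previous paragraph already gives $x^{(k)}_2\ge\tfrac14>\tfrac{m_{k+1}}{4(n_{k+1}+m_{k+1}+2)}$ (the right side is $<\tfrac14$ because $m_{k+1}<n_{k+1}+m_{k+1}+2$); alternatively one notes $x^{(k+1)}_1=\lambda_2(I_1^{(k+1)})/\lambda_2(I^{(k+1)})>0$ by full support of $\lambda_2$, so the numerator strictly exceeds $mb$. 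Thus the entire difficulty is concentrated in the invariance computation. I would finally remark that the same argument with threshold $\tfrac12$ in place of $\tfrac14$ goes through verbatim (one checks $b\ge\tfrac12\Rightarrow b'\ge\tfrac12$ from $m\ge3(n+1)$), yielding the sharper $x^{(k)}_2\ge\tfrac12$; the weaker constant $\tfrac14$ is retained only because it suffices and gives the clean closed form above, consistent with the remark preceding the lemma.
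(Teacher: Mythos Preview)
Your argument is correct and follows essentially the same route as the paper: the paper also proves the invariance of the region $\{v[2]>\tfrac14\}$ under $\bar A_{m,n}$ (under the slightly different but equivalent hypotheses $m\ge 3n$, $n>\tfrac85$) and then applies one step of $\bar A_{m_{k+1},n_{k+1}}$ to obtain the stated bound. Your write-up simply makes explicit the one-step formula and the verification that the paper compresses into a single sentence, and your closing observation about the sharper threshold $\tfrac12$ is precisely what the remark preceding the lemma alludes to.
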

\begin{proof} Observe that if $v \in \mathbb{R}_+^4$ is positive, $|v|_1=1$ and $v[2]>\frac 1 4$ then $\bar{A}_{m,n}v[2]>\frac 1 4 $ so long as $m\geq 3n$ and $n>\frac 8 5$. By induction, it follows that ${\underset{t=k+1}{\overset{r}{\prod}} \bar{A}_{m_t,n_t} e_2 [2]>\frac { m_{k+1}}{4(n_{k+1}+m_{k+1}+2)}}$.
\end{proof}
\begin{lem}\label{L2I3small} $\frac{\lambda_2(I_3^{(k)})}{\lambda_2(I^{(k)})}\leq  \frac{4n_{k+1}}{m_{k+1}}$.
\end{lem}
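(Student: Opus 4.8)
The plan is to collapse the infinite product down to a single application of $\bar A_{m_{k+1},n_{k+1}}$ and then bound the resulting third coordinate by hand. By the Proposition, $\frac{\lambda_2(I_j^{(k)})}{\lambda_2(I^{(k)})}$ is the $j$th entry of $\lim_{r\to\infty}\prod_{t=1}^r \bar A_{m_{k+t},n_{k+t}}e_2$, and since each $\bar A$ merely rescales its output, this limit factors as $\bar A_{m_{k+1},n_{k+1}}w$, where $w$ is the vector with $|w|_1=1$ whose $j$th entry is $\frac{\lambda_2(I_j^{(k+1)})}{\lambda_2(I^{(k+1)})}$ — this is again the Proposition, now read at level $k+1$. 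So it suffices to understand the third coordinate of $\bar A_{m_{k+1},n_{k+1}}w$ in terms of the coordinates of $w$.

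Next I would carry out the (direct) matrix computation displayed at the start of Section \ref{measure Keane}. Writing $w=(w_1,w_2,w_3,w_4)$ with $w_1+w_2+w_3+w_4=1$, the third entry of $A_{m_{k+1},n_{k+1}}w$ equals $n_{k+1}-w_3$, while the normalizing sum is $|A_{m_{k+1},n_{k+1}}w|_1=w_4+(m_{k+1}-1)w_1+m_{k+1}w_2+n_{k+1}+1$. Every summand here is nonnegative, so the normalizing sum is at least $m_{k+1}w_2$, and since $w_3\geq 0$ the numerator is at most $n_{k+1}$. This gives
$$\frac{\lambda_2(I_3^{(k)})}{\lambda_2(I^{(k)})}=\frac{n_{k+1}-w_3}{w_4+(m_{k+1}-1)w_1+m_{k+1}w_2+n_{k+1}+1}\leq \frac{n_{k+1}}{m_{k+1}\,w_2}.$$

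It remains to lower bound $w_2=\frac{\lambda_2(I_2^{(k+1)})}{\lambda_2(I^{(k+1)})}$, and for this I would invoke the estimate established inside the proof of Lemma \ref{L2I2big}: under Keane's growth conditions $m_t\geq 3n_t$ and $n_t>8/5$ (both hold, since $m_t\geq 3(n_t+1)$ and the $n_t$ are increasing with $n_1>9$), the property $v[2]>\tfrac14$ is preserved by each $\bar A_{m_t,n_t}$ and holds for $e_2$, so every partial product — and hence the limit vector $w$ — satisfies $w_2>\tfrac14$. Substituting into the displayed inequality yields $\frac{\lambda_2(I_3^{(k)})}{\lambda_2(I^{(k)})}<\frac{4n_{k+1}}{m_{k+1}}$, which is the claim.

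The calculation itself is elementary, so the only genuine decision is which bound on $w_2$ to feed in. The bound appearing in the \emph{statement} of Lemma \ref{L2I2big}, namely $\frac{m_{k+2}}{4(n_{k+2}+m_{k+2}+2)}$, is always strictly smaller than $\tfrac14$ and therefore too weak to close the estimate; the main point is to extract the cleaner threshold $w_2>\tfrac14$ directly from the \emph{proof} of that lemma rather than from its conclusion. Once that is done there is no obstacle.
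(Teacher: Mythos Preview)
Your proof is correct and follows essentially the same route as the paper. Both arguments peel off one factor $\bar A_{m_{k+1},n_{k+1}}$, bound the third coordinate of $A_{m_{k+1},n_{k+1}}w$ above by $n_{k+1}$, bound the normalizing $\ell_1$-norm below by its second coordinate $m_{k+1}w_2$, and invoke the $w_2>\tfrac14$ threshold established inside the proof of Lemma~\ref{L2I2big}; your closing remark that the \emph{statement} of that lemma is too weak while its \emph{proof} supplies exactly what is needed matches the paper's ``By the previous proof'' precisely.
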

\begin{proof} By the previous proof, $\underset{t=k+2}{\overset{r}{\prod}} \bar{A}_{m_t,n_t} e_2 [2]>\frac {1}{4}$. Therefore $A_{m_{k+1},n_{k+1}}\underset{t=k+2}{\overset{r}{\prod}} \bar{A}_{m_t,n_t} e_2 [2]>\frac {m_{k+1}}{4}$. Observing that $A_{m_{k+1},n_{k+1}}v[3]\leq n_{k+1}|v|$ for any $v \in \mathbb{R}^4_+$ implies that $\underset{t=k+1}{\overset{r}{\prod}} \bar{A}_{m_t,n_t} e_2 [3]<\frac {4n_{k+1}} { m_{k+1}}$.
\end{proof}
Before the next estimate we need a lemma.
\begin{lem} \label{b2 bigger} $b_{k,2}>b_{k,i}$ for $i \in \{1,3,4\}$.
\end{lem}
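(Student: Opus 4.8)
The plan is to prove the inequality by induction on $k$, using the $\ell_1$-norm formula $b_{k,i}=|A_{m_1,n_1}\cdots A_{m_k,n_k}e_i|_1$ from the Remark together with the recursions it induces. Writing $P_k=A_{m_1,n_1}\cdots A_{m_k,n_k}=P_{k-1}A_{m_k,n_k}$, the $i$th column of $P_k$ is $P_{k-1}(A_{m_k,n_k}e_i)$, so reading off the columns of $A_{m_k,n_k}$ and taking $\ell_1$-norms (which add over the nonnegative columns of $P_{k-1}$) gives
\begin{align*}
b_{k,1}&=(m_k-1)b_{k-1,2}+n_kb_{k-1,3}+b_{k-1,4},\\
b_{k,2}&=m_kb_{k-1,2}+n_kb_{k-1,3}+b_{k-1,4},\\
b_{k,3}&=b_{k-1,1}+(n_k-1)b_{k-1,3}+b_{k-1,4},\\
b_{k,4}&=b_{k-1,1}+n_kb_{k-1,3}+b_{k-1,4},
\end{align*}
the middle two of which are exactly the formulas recorded in the Remark. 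Taking $b_{0,j}=1$ for all $j$ (the first return time to the whole interval) recovers the base level.

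The inductive step is then pure subtraction. The three relevant differences are
$$b_{k,2}-b_{k,1}=b_{k-1,2},\quad b_{k,2}-b_{k,4}=m_kb_{k-1,2}-b_{k-1,1},\quad b_{k,2}-b_{k,3}=m_kb_{k-1,2}-b_{k-1,1}+b_{k-1,3}.$$
The first is positive because every $b$ is positive, with no hypothesis needed. For the other two it suffices to establish $m_kb_{k-1,2}>b_{k-1,1}$; since $m_k\geq 1$ and the inductive hypothesis supplies $b_{k-1,2}>b_{k-1,1}$, we get $m_kb_{k-1,2}\geq b_{k-1,2}>b_{k-1,1}$, and the extra term $b_{k-1,3}>0$ only helps in the third difference. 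Thus $b_{k,2}>b_{k,i}$ for $i\in\{1,3,4\}$, closing the induction. Note that the only part of the inductive statement actually consumed is the single comparison $b_{k-1,2}>b_{k-1,1}$.

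For the base case I would compute $b_{1,i}=|A_{m_1,n_1}e_i|_1$ directly, obtaining $b_{1,1}=m_1+n_1$, $b_{1,2}=m_1+n_1+1$, $b_{1,3}=n_1+1$, and $b_{1,4}=n_1+2$; hence $b_{1,2}$ strictly exceeds each of the others, the tightest margin being $b_{1,2}-b_{1,4}=m_1-1$.

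I do not expect a genuine obstacle: once the recursions are in hand the statement collapses to a two-term comparison, and the only point requiring a moment's attention is \emph{strictness}. In the inductive step strictness rests on propagating $b_{k-1,2}>b_{k-1,1}$, and in the base case it rests on $m_1>1$, which is guaranteed by the standing growth hypothesis $m_k\geq 3(n_k+1)\geq 6$ (and remains harmless even if one strips off finitely many matrices, since the assumption on $m_k$ is retained for the relevant indices). Should one wish to avoid invoking the growth condition altogether, starting the induction at $k=0$ with $b_{0,j}=1$ reduces the whole matter to verifying $m_1\geq 2$, which holds under the same assumptions.
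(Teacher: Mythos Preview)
Your proof is correct and follows essentially the same approach as the paper: both arguments compare the columns of $A_{m_k,n_k}$ and observe that the deficit of $A_{m_k,n_k}e_j$ (for $j=3,4$) in the first coordinate is outweighed by the surplus of $A_{m_k,n_k}e_2$ in the second coordinate after one more matrix application. Your presentation via the explicit $\ell_1$-recursions and direct subtraction is somewhat cleaner than the paper's entrywise comparison, though your closing remark about ``starting the induction at $k=0$'' is slightly muddled since $b_{0,2}=b_{0,1}$ makes the strict hypothesis fail there---the $k=1$ base case you actually gave is the right starting point.
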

\begin{proof} Notice that $b_{k,2}>b_{k,1}$ because the
 second entry of $A_{m_k, n_k} e_2=m_k > m_k-1$ and $m_k-1$ is the second entry of $A_{m_k, n_k} e_1$.
$A_{m_k,n_k}e_2$ agrees with
$A_{m_k,n_k}e_1$ in all other entries. Also, $b_{k,2} \geq b_{k,j}$ for $j=3,4$ because $A_{m_k,n_k}e_2 \geq A_{m_k,n_k} e_j$ in
all entries but the first and
$m_kA_{m_{k-1},n_{k-1}}e_2>A_{m_{k-1},n_{k-1}}e_1$ in all entries (the second entry of $A_{m_k,n_k}e_j$ is 0 and the second entry
of $A_{m_k,n_k}e_2$ is $m_ke_2$ and also the first entry of $A_{m_k,n_k}e_j =1$). This argument shows that $A_{m_{k-1},n_{k-1}}A_{m_k,n_k}e_2$ has each entry greater than or equal to the corresponding entries of $A_{m_{k-1},n_{k-1}}A_{m_k,n_k}e_j$ for $j=3,4$.
\end{proof}
\begin{lem}\label{L2I3big} $\frac {\lambda_2(I_3^{(k)})} {\lambda_2(I^{(k)})} \geq \frac {n_{k+1}} {2m_{k+1}}$.
\end{lem}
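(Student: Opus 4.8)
The plan is to read off $\frac{\lambda_2(I_3^{(k)})}{\lambda_2(I^{(k)})}$ from the Proposition as the third entry of $\lim_{r\to\infty}\prod_{t=1}^{r}\bar{A}_{m_{k+t},n_{k+t}}e_2$ and to extract the bound from a single application of the leading matrix $\bar{A}_{m_{k+1},n_{k+1}}$. First I would set $w=\lim_{r\to\infty}\prod_{t=2}^{r}\bar{A}_{m_{k+t},n_{k+t}}e_2$, so that $w\in\Delta_3$ — in particular $w\ge 0$ and $|w|=1$ — and the quantity to be bounded is the third entry of $\bar{A}_{m_{k+1},n_{k+1}}w$. The point of peeling off exactly one matrix is that the entire tail $w$ will enter the estimate only through the two facts $w\ge 0$ and $|w|=1$, which is what keeps the computation to a single step.

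Writing $w=(a,b,c,d)$, the definition of $A_{m,n}$ gives $A_{m_{k+1},n_{k+1}}w=\left(c+d,\ (m_{k+1}-1)a+m_{k+1}b,\ n_{k+1}-c,\ 1\right)$, using $a+b+c+d=1$. Its third coordinate satisfies $n_{k+1}-c\ge n_{k+1}-1$ because $c=w[3]\le 1$, while the normalizing sum is $|A_{m_{k+1},n_{k+1}}w|=(m_{k+1}-1)a+m_{k+1}b+d+n_{k+1}+1$, where the two copies of $c$ cancel. Since $(m_{k+1}-1)a+m_{k+1}b+d\le m_{k+1}(a+b+d)\le m_{k+1}$, the sum is at most $m_{k+1}+n_{k+1}+1$, and therefore the third entry of $\bar{A}_{m_{k+1},n_{k+1}}w$ is at least $\frac{n_{k+1}-1}{m_{k+1}+n_{k+1}+1}$.

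It then remains to verify the elementary inequality $\frac{n_{k+1}-1}{m_{k+1}+n_{k+1}+1}\ge\frac{n_{k+1}}{2m_{k+1}}$, which upon clearing denominators is $m_{k+1}(n_{k+1}-2)\ge n_{k+1}(n_{k+1}+1)$. This is where the growth hypotheses are used: since $n_{k+1}-2>0$, the left-hand side is smallest when $m_{k+1}=3(n_{k+1}+1)$, and there the inequality becomes $(n_{k+1}+1)(n_{k+1}-3)\ge 0$, which holds because $n_{k+1}>9$. The only real obstacle is bookkeeping — arranging the one-step reduction so that the crude bounds $n_{k+1}-c\ge n_{k+1}-1$ and $|A_{m_{k+1},n_{k+1}}w|\le m_{k+1}+n_{k+1}+1$ lose little enough that Keane's size and growth conditions still close the estimate. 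A more combinatorial alternative, using that $I_3^{(k)}$ contains $n_{k+1}$ disjoint images of $I_2^{(k+1)}$ together with the domination $b_{k,2}>b_{k,i}$ from Lemma \ref{b2 bigger}, is also available, but it yields a weaker constant, so I would present the matrix computation above.
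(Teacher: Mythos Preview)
Your argument is correct and shares the paper's high-level strategy---peel off the leading matrix $\bar{A}_{m_{k+1},n_{k+1}}$ and bound the third coordinate uniformly over the tail $w$---but the execution differs. You compute directly with a generic probability vector $w=(a,b,c,d)$, obtaining the slightly lossy numerator $n_{k+1}-1$ (from $c\le 1$) and then closing the gap with the Keane growth conditions. The paper instead argues that the map $v\mapsto \bar{A}_{m_{k+1},n_{k+1}}v[3]$, being a ratio of linear functionals, is minimized on $\Delta_3$ at a vertex, and checks via Lemma~\ref{b2 bigger} that the minimizing vertex is $e_2$, giving the sharper value $\frac{n_{k+1}}{m_{k+1}+n_{k+1}+1}$ directly. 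Your route is more self-contained (no appeal to Lemma~\ref{b2 bigger}, no convexity step), at the cost of a marginally weaker intermediate bound that still suffices. One small correction: your final reduction actually yields $2(n_{k+1}+1)(n_{k+1}-3)\ge 0$, not $(n_{k+1}+1)(n_{k+1}-3)\ge 0$, but the conclusion $n_{k+1}\ge 3$ is unchanged.
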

\begin{proof} By inspection $\bar{A}_{m_{k+1},n_{k+1}}e_2[3]=\frac{n_{k+1}}{m_{k+1}+n_{k+1}+1}>\frac {n_{k+1}} {2m_{k+1}}$. We now prove  $\bar{A}_{m_{k+1},n_{k+1}}e_2[3]<\bar{A}_{m_{k+1},n_{k+1}}e_i[3]$ for $i=1,3,4$. This is because $|A_{m_{k+1},n_{k+1}}e_2|>|A_{m_{k+1},n_{k+1}}|$ for $i=1,3,4$ (Lemma \ref{b2 bigger}) and $A_{m_{k+1},n_{k+1}}e_i=n_{k+1}$ for $i=1,2,4$. For $i=3$ notice that $|A_{m_{k+1},n_{k+1}}e_2|>3|A_{m_{k+1},n_{k+1}}e_3|$ and $A_{m_{k+1},n_{k+1}}e_3=n_{k+1}-1>\frac {n_{k+1}}{3}$. Thus $\bar{A}_{m_{k+1},n_{k+1}}(\bar{A}_{m_{k+2},n_{k+2}}...\bar{A}_{m_{k+r},n_{k+r}})e_2[3]\geq\frac {n_{k+1}} {2m_{k+1}}$.
\end{proof}
\begin{lem}\label{L2I4big} $\frac {\lambda_2(I_4^{(k)})}{\lambda_2(I^{(k)})}> \frac 1 {2m_{k+1}}$.
\end{lem}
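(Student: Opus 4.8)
The plan is to estimate the third entry of the relevant infinite product, since by the Proposition $\frac{\lambda_2(I_4^{(k)})}{\lambda_2(I^{(k)})}$ equals the fourth entry of $\lim_{r\to\infty}\prod_{t=1}^{r}\bar{A}_{m_{k+t},n_{k+t}}e_2$. Following the template set by the previous lemmas (especially Lemma \ref{L2I3big} and Lemma \ref{L2I2big}), I would reduce the problem to understanding a single application of $\bar{A}_{m_{k+1},n_{k+1}}$ to a vector $v=\prod_{t=k+2}^{r}\bar{A}_{m_t,n_t}e_2$ about which we already have information. In particular, Lemma \ref{L2I2big} tells us that $v[2]>\tfrac14$, and this is the crucial input.

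The key computation is the fourth coordinate of the matrix-vector product. From the explicit product displayed at the start of Section \ref{measure Keane}, the fourth entry of $A_{m_{k+1},n_{k+1}}v$ is $a+b+c+d=|v|_1=1$ when $|v|_1$ is normalized to one, while the normalizing factor $|A_{m_{k+1},n_{k+1}}v|$ is the sum of all four output entries. So first I would compute $\bar{A}_{m_{k+1},n_{k+1}}v[4]=\frac{|v|_1}{|A_{m_{k+1},n_{k+1}}v|}$. The denominator $|A_{m_{k+1},n_{k+1}}v|$ is, reading off the four output entries, bounded above by a quantity of size roughly $(m_{k+1}+n_{k+1})|v|_1$; more precisely the second entry contributes at most $m_{k+1}|v|_1$, the third at most $n_{k+1}|v|_1$, and the first and fourth contribute at most $|v|_1$ each. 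Hence $\bar{A}_{m_{k+1},n_{k+1}}v[4]\geq \frac{1}{m_{k+1}+n_{k+1}+2}$, which under Keane's standing hypothesis $3(n_{k+1}+1)\leq m_{k+1}$ is already of order $\frac{1}{m_{k+1}}$. To reach the stated constant $\frac{1}{2m_{k+1}}$ I would sharpen the denominator bound using $v[2]>\tfrac14$: since $m_{k+1}\geq 3n_{k+1}$, the term $m_{k+1}b+n_{k+1}(a+b+c+d)-c\leq (m_{k+1}+n_{k+1})|v|_1$ is controlled against $m_{k+1}$ to give $|A_{m_{k+1},n_{k+1}}v|<2m_{k+1}|v|_1$, whence $\bar{A}_{m_{k+1},n_{k+1}}v[4]>\frac{1}{2m_{k+1}}$.

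Finally, I would observe that applying further matrices $\bar{A}_{m_{k+2},n_{k+2}},\dots$ on the right (i.e. taking the limit in $r$) does not destroy this bound, exactly as in the inductive closing sentences of Lemmas \ref{L2I3big} and \ref{L2I2big}: the relevant coordinate stabilizes as $r\to\infty$ by Keane's convergence theorem, and the one-step estimate controls the limit because the factor being applied is the \emph{outermost} (leftmost) matrix $\bar{A}_{m_{k+1},n_{k+1}}$, with $v[2]>\tfrac14$ holding for every partial product by Lemma \ref{L2I2big}. The main obstacle I anticipate is pinning down the denominator bound tightly enough to yield the clean factor of $2$ rather than a worse constant; this is where the hypothesis $m_{k+1}\geq 3n_{k+1}$ and the lower bound $v[2]>\tfrac14$ must be combined carefully, but it is a direct computation rather than a conceptual difficulty.
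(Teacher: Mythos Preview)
Your approach is correct and is essentially the paper's own argument, recast in matrix language. The paper observes combinatorially that $I_4^{(k)}=I^{(k+1)}$ contains exactly one image of each $I_i^{(k+1)}$, while $I^{(k)}$ contains at most $m_{k+1}+n_{k+1}+1$ images of each (the maximal column sum of $A_{m_{k+1},n_{k+1}}$), so $2m_{k+1}$ copies of $I_4^{(k)}$ cover $I^{(k)}$ in measure; your computation that the fourth entry of $A_{m_{k+1},n_{k+1}}v$ equals $|v|_1$ while $|A_{m_{k+1},n_{k+1}}v|_1\leq (m_{k+1}+n_{k+1}+2)|v|_1$ is precisely the same statement.

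One simplification: you do not need $v[2]>\tfrac14$ at all. From Keane's hypothesis $3(n_{k+1}+1)\leq m_{k+1}$ one has $n_{k+1}+2\leq m_{k+1}$ (indeed $n_{k+1}+2\leq\frac{m_{k+1}}{3}+1<m_{k+1}$ once $m_{k+1}\geq 2$), so $m_{k+1}+n_{k+1}+2\leq 2m_{k+1}$ directly, and your first bound $\bar{A}_{m_{k+1},n_{k+1}}v[4]\geq\frac{1}{m_{k+1}+n_{k+1}+2}$ already yields the constant $\frac{1}{2m_{k+1}}$ for \emph{any} probability vector $v$. This also renders your final paragraph unnecessary: since the estimate holds uniformly in $v$, it holds for each partial product and hence in the limit, with no inductive stabilization argument required.
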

\begin{proof} There are at most $m_{k+1}+n_{k+1}+1$ disjoint images of any $I_i^{(k+1)}$ in $I^{(k)}$. By our standard assumptions $n_{k+1}+1<m_{k+1}$. Also $I_4^{(k+1)}$ is made up of one image of each $I_i^{(k+1)}$. Thus $2m_{k+1}$ copies of $I_4^{(k)}$ cover $I^{(k)}$. 
\end{proof}
\begin{lem} \label{L2I4small} $\frac {\lambda_2(I_4^{(k)})}{\lambda_2(I^{(k)})}<\frac 4 {m_{k+1}}$.
\end{lem}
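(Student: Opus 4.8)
The plan is to estimate the fourth coordinate of the vector $\bar{A}_{m_{k+1},n_{k+1}}v$, where I set $v := \underset{r \to \infty}{\lim}\underset{t=k+2}{\overset{r}{\prod}}\bar{A}_{m_t,n_t}e_2$. By the Proposition identifying $\frac{\lambda_i(I_j^{(k)})}{\lambda_i(I^{(k)})}$ with the $j$th entry of $\underset{r \to \infty}{\lim}\underset{t=1}{\overset{r}{\prod}}\bar{A}_{m_{k+t},n_{k+t}}e_i$, and by continuity of the projective map $\bar{A}_{m_{k+1},n_{k+1}}$, this fourth coordinate is exactly $\frac{\lambda_2(I_4^{(k)})}{\lambda_2(I^{(k)})}$. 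I would mirror the argument already used for Lemma \ref{L2I3small}.

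First I would record the two features of $v$ that I need. Since $v$ is a normalized limit of products of the $\bar{A}$, it is a positive probability vector, so $|v| = 1$. The induction inside the proof of Lemma \ref{L2I2big} shows that the second coordinate of any tail product starting from $e_2$ stays above $\frac14$: the base case $e_2[2]=1>\frac14$ is preserved by each $\bar{A}_{m,n}$ under the standing hypotheses $m \geq 3n$, $n > \frac85$, so $v[2] > \frac14$.

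Next I would use the explicit shape of $A_{m,n}$. Its fourth row is $(1,1,1,1)$, so $A_{m_{k+1},n_{k+1}}v[4] = |v| = 1$; this is the key simplification, that the all-ones row forces the fourth coordinate of the un-normalized image to equal the total mass of the input. At the same time the total mass of the image dominates its second coordinate,
$$|A_{m_{k+1},n_{k+1}}v| \geq A_{m_{k+1},n_{k+1}}v[2] = (m_{k+1}-1)v[1] + m_{k+1}v[2] \geq m_{k+1}v[2] > \frac{m_{k+1}}{4}.$$
Dividing, I obtain
$$\frac{\lambda_2(I_4^{(k)})}{\lambda_2(I^{(k)})} = \bar{A}_{m_{k+1},n_{k+1}}v[4] = \frac{1}{|A_{m_{k+1},n_{k+1}}v|} < \frac{4}{m_{k+1}},$$
which is the claim.

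There is essentially no hard step here: once one notices that the all-ones fourth row makes $A_{m_{k+1},n_{k+1}}v[4]=1$, the bound is immediate from the lower bound on the output mass already extracted in Lemmas \ref{L2I2big} and \ref{L2I3small}. The only point to be careful about is invoking the correct tail of the induction, namely that $v[2] > \frac14$ holds for the product starting at index $k+2$ rather than $k+1$; this is exactly what the preservation-of-$\frac14$ invariant delivers, and it relies on the standing growth assumptions on the $m_i$ and $n_i$.
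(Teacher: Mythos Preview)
Your proof is correct and follows essentially the same approach as the paper: the paper's two-sentence argument observes that the fourth entry of $A_{m_{k+1},n_{k+1}}(\bar{A}_{m_{k+2},n_{k+2}}\cdots\bar{A}_{m_{k+r},n_{k+r}}e_2)$ equals $1$ (since the fourth row of $A_{m,n}$ is all ones and the input is a probability vector) while, by the invariant from Lemma~\ref{L2I2big}, its second entry is at least $\tfrac{m_{k+1}}{4}$, and dividing gives the bound. You have simply written out this same computation in more detail.
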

\begin{proof} By construction the fourth entry of $A_{m_{k+1},n_{k+1}}(\bar{A}_{m_{k+2},n_{k+2}}...\bar{A}_{m_{k+r},n_{k+r}})$ is 1. By Lemma \ref{L2I2big} the second entry is at least $\frac 1 4 m_{k+1}$.
\end{proof}
\begin{lem} \label{L2I1small} $\frac {\lambda_2(I_1^{(k)})}{\lambda_2(I^{(k)})}< \frac {16n_{k+2}+ 16} {m_{k+1}m_{k+2}}$.
\end{lem}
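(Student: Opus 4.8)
The plan is to read the ratio directly off the Proposition and then estimate numerator and denominator separately. Write $w=\lim_{r\to\infty}\prod_{t=1}^{r}\bar{A}_{m_{k+1+t},n_{k+1+t}}e_2$ for the level-$(k+1)$ probability vector, so that by the Proposition $w[j]=\lambda_2(I_j^{(k+1)})/\lambda_2(I^{(k+1)})$ and, pulling the first factor out of the product,
$$\frac{\lambda_2(I_1^{(k)})}{\lambda_2(I^{(k)})}=\bar{A}_{m_{k+1},n_{k+1}}w\,[1].$$
Since the first row of $A_{m,n}$ is $(0,0,1,1)$, the unnormalized first coordinate of $A_{m_{k+1},n_{k+1}}w$ is exactly $w[3]+w[4]$, so the quantity to bound is $\bigl(w[3]+w[4]\bigr)/|A_{m_{k+1},n_{k+1}}w|$.

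For the numerator I would invoke Lemmas \ref{L2I3small} and \ref{L2I4small} one level down, which give $w[3]\le 4n_{k+2}/m_{k+2}$ and $w[4]<4/m_{k+2}$, hence $w[3]+w[4]<(4n_{k+2}+4)/m_{k+2}$. The only genuine decision is how to bound the normalizing sum $|A_{m_{k+1},n_{k+1}}w|$ from below. The crude bound $|A_{m_{k+1},n_{k+1}}w|\ge n_{k+1}+1$, coming from the third and fourth rows, produces the wrong factor and is useless here. Instead I would keep only the second coordinate, $|A_{m_{k+1},n_{k+1}}w|\ge (m_{k+1}-1)w[1]+m_{k+1}w[2]\ge m_{k+1}w[2]$, and use the invariance established inside the proof of Lemma \ref{L2I2big}: the second coordinate of every such limit vector exceeds $\frac{1}{4}$. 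This yields $|A_{m_{k+1},n_{k+1}}w|>m_{k+1}/4$.

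Combining the two estimates gives $\lambda_2(I_1^{(k)})/\lambda_2(I^{(k)})<\dfrac{(4n_{k+2}+4)/m_{k+2}}{m_{k+1}/4}=\dfrac{16n_{k+2}+16}{m_{k+1}m_{k+2}}$, as claimed. There is no substantive obstacle: the content is bookkeeping together with the reuse of two earlier estimates. The only points requiring care are to extract the $m_{k+1}$ factor from the $w[2]>\frac14$ invariant rather than from a geometric count of images, and to keep the index shift straight, so that the $n_{k+1},m_{k+1}$ appearing in the cited lemmas correctly become $n_{k+2},m_{k+2}$ when those lemmas are applied at level $k+1$.
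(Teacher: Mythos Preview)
Your argument is correct and is essentially the paper's own proof, written at the matrix level instead of the geometric level. The paper observes that $I_1^{(k)}$ consists of one image of $I_3^{(k+1)}$ and one of $I_4^{(k+1)}$ (equivalently, that the first row of $A_{m,n}$ is $(0,0,1,1)$), then factors $\frac{\lambda_2(I_1^{(k)})}{\lambda_2(I^{(k)})}=\frac{\lambda_2(I^{(k+1)})}{\lambda_2(I^{(k)})}\cdot\frac{\lambda_2(I_3^{(k+1)}\cup I_4^{(k+1)})}{\lambda_2(I^{(k+1)})}$ and bounds the two factors by Lemma~\ref{L2I4small} and by Lemmas~\ref{L2I3small}, \ref{L2I4small} at level $k+1$, respectively. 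Your denominator bound $|A_{m_{k+1},n_{k+1}}w|>m_{k+1}/4$ from the $w[2]>\frac14$ invariant is precisely the content of the proof of Lemma~\ref{L2I4small}, so the only difference is that the paper cites that lemma while you re-derive it inline.
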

\begin{proof} $I_1^{(k)}$ is made up of one image of $I_3^{(k+1)}$ and one image of $I_4^{(k+1)}$. ${\frac {\lambda_2(I_1^{(k)})}{\lambda_2(I^{(k)})}= \frac {\lambda_2(I^{(k+1)})}{\lambda_2(I^{(k)})} \frac {\lambda_2(I_3^{(k+1)} \cup I_4^{(k+1)})}{\lambda_2(I^{(k+1)})}}$. By the fact that $I^{(k+1)}=I_4^{(k)}$, Lemma \ref{L2I3small} and Lemma \ref{L2I4small} this is less than $\frac 4 {m_{k+1}} \frac {4n_{k+2}+4}{m_{k+2}}.$
\end{proof}
\begin{lem} \label{L2I1big} $\frac {\lambda_2(I_1^{(k)})}{\lambda_2(I^{(k)})}> \frac {n_{k+2}} {4m_{k+1}m_{k+2}}$.
\end{lem}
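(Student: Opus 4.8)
The plan is to mirror the proof of the companion upper bound, Lemma \ref{L2I1small}, replacing each upper estimate by its lower counterpart. The combinatorial input is identical: $I_1^{(k)}$ is the disjoint union of a single $T$-image of $I_3^{(k+1)}$ and a single $T$-image of $I_4^{(k+1)}$. Since $\lambda_2$ is $T$-invariant, this yields $\lambda_2(I_1^{(k)})=\lambda_2(I_3^{(k+1)})+\lambda_2(I_4^{(k+1)})$, and therefore the factorization
\[
\frac{\lambda_2(I_1^{(k)})}{\lambda_2(I^{(k)})}=\frac{\lambda_2(I^{(k+1)})}{\lambda_2(I^{(k)})}\cdot\frac{\lambda_2(I_3^{(k+1)})+\lambda_2(I_4^{(k+1)})}{\lambda_2(I^{(k+1)})}.
\]

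First I would bound the first factor from below. Because $I^{(k+1)}=I_4^{(k)}$, this factor is exactly $\lambda_2(I_4^{(k)})/\lambda_2(I^{(k)})$, which Lemma \ref{L2I4big} bounds below by $\frac{1}{2m_{k+1}}$. For the second factor I would discard the nonnegative contribution of $I_4^{(k+1)}$, reducing it to $\lambda_2(I_3^{(k+1)})/\lambda_2(I^{(k+1)})$, and then apply Lemma \ref{L2I3big} with the index shifted from $k$ to $k+1$, giving the lower bound $\frac{n_{k+2}}{2m_{k+2}}$. Multiplying the two estimates produces $\frac{n_{k+2}}{4m_{k+1}m_{k+2}}$, exactly the claimed bound.

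There is essentially no obstacle here; the argument is the lower-bound twin of Lemma \ref{L2I1small} and reuses the same two ingredients already needed there, namely the identification $I^{(k+1)}=I_4^{(k)}$ and the index shift in the $I_3$ estimate. The only point demanding care is bookkeeping: one must confirm that dropping the $I_4^{(k+1)}$ term is legitimate (it is, since only a lower bound is sought) and that the shifted hypothesis of Lemma \ref{L2I3big} is still covered by the standing assumptions $3(n_{k+2}+1)\le m_{k+2}$, so that the lemma applies at level $k+1$.
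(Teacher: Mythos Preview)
Your proof is correct and follows essentially the same route as the paper's own argument: both use that $I_1^{(k)}$ contains one image of $I_3^{(k+1)}$, then combine Lemma~\ref{L2I4big} (for $\lambda_2(I^{(k+1)})/\lambda_2(I^{(k)})>\frac{1}{2m_{k+1}}$) with Lemma~\ref{L2I3big} at level $k+1$ (for $\lambda_2(I_3^{(k+1)})/\lambda_2(I^{(k+1)})>\frac{n_{k+2}}{2m_{k+2}}$) to obtain the bound.
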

\begin{proof} $I_1^{(k)}$ contains one image of $I_3^{(k+1)}$. 
By Lemma \ref{L2I3big}, $\frac {\lambda_2(I_3^{(k+1)})}{\lambda_2(I^{(k+1)})}> \frac {n_{k+2}}{2m_{k+2}}$ and by Lemma \ref{L2I4big}, $\frac{\lambda_2(I^{(k+1)})}{\lambda_2(I^{(k)})}> \frac 1 {2m_{k+1}}$.
\end{proof}

\section{Hausdorff dimension for ergodic measures in Keane type examples}
\subsection{Definition of Hausdorff dimension} Given a metric $D$
let $diam(U)=\underset{ x , y \in U}{\sup}D(x,y)$.
Consider a set $S \subset [0,1)$. We say a collection of open sets $\mathcal{U}=\{U_i\}_{i=1}^{\infty}$ is a $\delta>0$ cover of $S$ if $S \subset \underset{i=1}{\overset{\infty}{\cup}} U_i$ and $diam(U_i)\leq \delta$ $\forall i$. Let $H_{\delta}^s(S)=\inf \{\underset{i=1}{\overset{\infty}{\sum}} |U_i|^s: \{U_i\} \text{ is a } \delta \text{ cover of } S\} $. Let $H^s(S)=\underset{\delta \to 0^+}{\lim}H_{\delta}^s(S)$. Notice that the limit exists. Let $H_{dim}(S)=\inf \{s: H^s(S)=0\}$. This is equivalent to defining $H_{dim}(S)=\sup \{s:H^s(S)=\infty\}$. We state a few well known properties of Hausdorff dimension.

$H_{dim}(\underset{i=1}{\overset{\infty}{\cup}} S_i)=\underset{i}{\sup}\, H_{dim}(S_i)$.

$H_{dim} (\underset{i=1}{\overset{\infty}{\cap}} S_i) \leq \underset{i}{\inf}\, H_{dim}(S_i)$. 

\begin{Bob}
For a Borel Measure $\mu$ we define the \emph{Hausdorff dimension of a probability measure
$\mu$} is \begin{center}$H_{\text{dim}}(\mu)=
\inf\{H_{\text{dim}}(M) \colon M \text{ is Borel and } \mu(M)=1 \}$.\end{center}
\end{Bob}

For upper bounds to Hausdorff dimension of a set, explicit coverings are often all that is necessary. For lower bounds Frostman's Lemma is useful.

\begin{lem} (Frostman) Let $B \subset [0,1)$  be a Borel set. $H^s(B)>0$ iff there exists a finite radon measure on $B$, $\nu$,  such that for all $x$ and $r>0$ we have $\nu(B(x,r)) \leq r^s$.
\end{lem}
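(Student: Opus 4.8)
The plan is to prove the two directions separately; the forward implication (existence of a nonzero $\nu$ gives $H^s(B)>0$) is the elementary ``mass distribution principle,'' while the reverse implication is the substantive half.

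For the easy direction, suppose a nonzero finite Radon measure $\nu$ on $B$ satisfies $\nu(B(x,r)) \le r^s$ for all $x$ and $r>0$. Given any $\delta$-cover $\{U_i\}$ of $B$, fix $x_i \in U_i$; then $U_i$ lies in the closed ball of radius $diam(U_i)$ about $x_i$, so by the hypothesis (taking a limit of radii from above if $B(x,r)$ denotes the open ball) $\nu(U_i) \le diam(U_i)^s = |U_i|^s$. Summing, $0 < \nu(B) \le \sum_i \nu(U_i) \le \sum_i |U_i|^s$. Taking the infimum over $\delta$-covers gives $H_\delta^s(B) \ge \nu(B) > 0$ for every $\delta$, hence $H^s(B) \ge \nu(B) > 0$.

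For the hard direction, assume $H^s(B) > 0$. First I would reduce to a compact subset: by the standard inner regularity (capacitability) of Hausdorff measure on Borel subsets of $[0,1)$, there is a compact $K \subset B$ with $H^s(K) > 0$, and it suffices to build $\nu$ on $K$. Next I would run the dyadic cascade construction. Let $D_n$ denote the dyadic subintervals of $[0,1)$ of length $2^{-n}$, and write $\ell(Q)$ for the length of $Q$. Fix a large $N$ and build a measure $\mu_N$ by a downward-then-upward pass: assign each $Q \in D_N$ meeting $K$ the mass $\ell(Q)^s = 2^{-Ns}$, and for $n<N$ set, for $Q \in D_n$, $\mu_N(Q) = \min\{\ell(Q)^s,\ \sum_{Q'\subset Q,\, Q'\in D_{n+1}} \mu_N(Q')\}$, then redistribute mass downward so that $\mu_N$ becomes an honest measure supported on the generation-$N$ intervals meeting $K$. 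By construction $\mu_N(Q) \le \ell(Q)^s$ for every dyadic $Q$ of generation at most $N$.

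The crux is the uniform lower bound $\mu_N([0,1)) \ge c\,H^s_\infty(K)$, where $H^s_\infty$ is the Hausdorff content (defined as $H^s$ but with no constraint on $diam(U_i)$) and $c$ is an absolute dyadic constant. This is shown by tracing each dyadic interval up to the first ancestor where the cap $\ell(Q)^s$ was active, thereby extracting from the cascade a cover of $K$ whose $s$-sum is controlled by $\mu_N([0,1))$; since $H^s(K) > 0$ forces $H^s_\infty(K) > 0$, the total masses stay bounded below uniformly in $N$, while $\mu_N([0,1)) \le \ell([0,1))^s = 1$ bounds them above. I would then pass to a weak-* limit $\mu$ along a subsequence of measures on the compact interval $[0,1]$. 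Because each $\mu_N$ is supported within distance $2^{-N}$ of the compact set $K$, the limit $\mu$ is supported on $K$ (hence on $B$) and is nonzero, and the dyadic bound $\mu(Q) \le \ell(Q)^s$ survives in the limit (using semicontinuity, approximating the half-open $Q$ by slightly larger open intervals). Finally, any ball $B(x,r)$ is covered by a bounded number of dyadic intervals of length $\le 2r$, giving $\mu(B(x,r)) \le C r^s$; replacing $\mu$ by $\mu/C$ yields the required bound $\nu(B(x,r)) \le r^s$ with $\nu$ still a nonzero finite Radon measure.

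The main obstacle is the lower bound on $\mu_N([0,1))$ and its persistence under the weak-* limit: one must argue both that mass cannot concentrate off $K$ and that it cannot all escape, which is exactly where the comparison with Hausdorff content and the cap structure of the cascade are essential. The reduction to a compact $K$ of positive measure also relies on a nontrivial capacitability argument, though in the concrete setting of subsets of $[0,1)$ this is standard.
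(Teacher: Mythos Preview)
The paper does not prove this lemma; it simply cites \cite[p.~112]{mattila}. Your sketch is the standard argument found there (and in most geometric measure theory texts): the mass distribution principle for one direction, and the dyadic cascade construction followed by a weak-$*$ limit for the other. So in substance you are reproducing exactly the proof the paper defers to, and your outline is correct.

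Two small remarks. First, your reduction to a compact $K\subset B$ with $H^s(K)>0$ is genuinely needed for the weak-$*$ limit to stay supported on $B$, and the capacitability of Hausdorff content (Choquet's theorem applied to $H^s_\infty$ on Souslin sets) is the right tool; you flag this correctly. Second, for the passage of the dyadic bound to the limit it is cleanest to realize each $\mu_N$ as a non-atomic measure (e.g.\ uniform on each generation-$N$ interval meeting $K$), so that $\mu_N(\overline{Q})=\mu_N(Q)\le \ell(Q)^s$; then portmanteau on closed sets gives $\mu(Q)\le\mu(\overline{Q})\le\ell(Q)^s$ directly, avoiding the slightly awkward ``slightly larger open interval'' step.
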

See \cite[p. 112]{mattila}.
\begin{cor} \label{keanefrost} If $\mu$ is a measure on $[0,1)$ and $\epsilon_1,...$ is a positive sequence tending to 0 such that $\frac {\epsilon_i}{\epsilon_{i+1}}<C$ for some $C$ and all $i$ then $\mu(B(x,\epsilon_i))<C(\epsilon_i)^{\alpha}$ implies $H_{dim}(\mu) \geq \alpha$.
\end{cor}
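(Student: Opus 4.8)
The plan is to use the ratio hypothesis to upgrade the ball bound, which is assumed only at the discrete scales $\epsilon_i$, into a uniform bound at \emph{all} small scales, and then to feed this into Frostman's Lemma.

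First I would fix a radius $r$ with $0 < r \le \epsilon_1$ and locate the largest index $j$ for which $\epsilon_j \ge r$. This index exists and is finite because $\epsilon_i \to 0$ forces only finitely many terms to exceed $r$, while $\epsilon_1 \ge r$ keeps the index set nonempty. By maximality $\epsilon_{j+1} < r$, so the hypothesis $\epsilon_j/\epsilon_{j+1} < C$ gives $r \le \epsilon_j < C\epsilon_{j+1} < Cr$; that is, $\epsilon_j$ is comparable to $r$. Since $B(x,r) \subseteq B(x,\epsilon_j)$, the assumed bound $\mu(B(x,\epsilon_j)) < C\epsilon_j^{\alpha}$ transfers to give $\mu(B(x,r)) < C(Cr)^{\alpha} = C^{1+\alpha}r^{\alpha}$. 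Hence there is a constant $C' = C^{1+\alpha}$ with $\mu(B(x,r)) < C' r^{\alpha}$ for every $x$ and every $0 < r \le \epsilon_1$. I note that monotonicity of $(\epsilon_i)$ is never needed; only that it tends to $0$ and has bounded consecutive ratios.

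Next I would remove the restriction $r \le \epsilon_1$. For $r > \epsilon_1$ the bound is trivial from the total mass $\mu([0,1)) = 1$, and setting $K = \max\{C', \epsilon_1^{-\alpha}\}$ yields $\mu(B(x,r)) \le K r^{\alpha}$ for all $x$ and all $r > 0$. Then $\nu := \mu/K$ is a finite Radon measure (since $[0,1)$ is a nice metric space) satisfying $\nu(B(x,r)) \le r^{\alpha}$ for every $x$ and $r$. Finally I would invoke the definition $H_{dim}(\mu) = \inf\{H_{dim}(M) : M \text{ Borel}, \mu(M) = 1\}$: given any such $M$, the restriction $\nu|_M = \mu|_M/K$ is a nonzero finite Radon measure carried by $M$ (its total mass is $\mu(M)/K = 1/K > 0$) that still obeys $\nu|_M(B(x,r)) \le \nu(B(x,r)) \le r^{\alpha}$. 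Frostman's Lemma with $s = \alpha$ then gives $H^{\alpha}(M) > 0$, hence $H_{dim}(M) \ge \alpha$. As $M$ ranges over all Borel sets of full $\mu$-measure, this proves $H_{dim}(\mu) \ge \alpha$.

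The main obstacle is the first step: passing from the estimate available only at the prescribed radii $\epsilon_i$ to an estimate at every small $r$. This is exactly what the assumption $\epsilon_i/\epsilon_{i+1} < C$ is for, and the one point requiring care is that the interpolation must go through a ball $B(x,\epsilon_j)$ with $\epsilon_j \ge r$, so that the inclusion $B(x,r) \subseteq B(x,\epsilon_j)$ transfers the bound in the correct direction, while $\epsilon_j$ remains within a factor $C$ of $r$ so that the exponent $\alpha$ survives up to a constant. Everything after that is routine normalization and a direct appeal to Frostman's Lemma.
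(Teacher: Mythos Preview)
The paper states this corollary immediately after Frostman's Lemma and gives no proof of its own, so there is nothing to compare against; your argument is correct and is exactly the standard way to deduce the corollary. You correctly use the bounded-ratio hypothesis to interpolate the discrete-scale estimate to all small radii, normalize, and then apply Frostman's Lemma to an arbitrary full-measure Borel set to obtain the lower bound on $H_{dim}(\mu)$.
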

\begin{lem} If $T$ is a piecewise isometry then $H_{dim}(T(S))\leq H_{dim}(S)$.
\end{lem}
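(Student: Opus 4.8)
The plan is to reduce the statement to the two standard facts recorded above: that Hausdorff dimension is finitely (indeed countably) stable under unions, and that an isometry cannot increase the $s$-dimensional Hausdorff measure. Write the finitely many intervals of continuity of $T$ as $J_1,\dots,J_N$, so that the restriction of $T$ to each $J_i$ is an isometry for the metric $D$. Since $S=\underset{i=1}{\overset{N}{\cup}}(S\cap J_i)$ we have $T(S)=\underset{i=1}{\overset{N}{\cup}}T(S\cap J_i)$, and by finite stability $H_{dim}(T(S))=\underset{1\le i\le N}{\max}H_{dim}(T(S\cap J_i))$. Thus it suffices to prove $H_{dim}(T(S\cap J_i))\le H_{dim}(S\cap J_i)$ on each piece, since each $H_{dim}(S\cap J_i)\le H_{dim}(S)$ by monotonicity of dimension under inclusion.

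First I would fix $i$ and show that an isometry does not increase $H^s$. Given any $\delta$-cover $\{U_k\}$ of $S\cap J_i$, the sets $V_k:=T(U_k\cap J_i)$ cover $T(S\cap J_i)$, and because $T|_{J_i}$ is an isometry for $D$ we have $diam(V_k)=diam(U_k\cap J_i)\le diam(U_k)\le\delta$. Hence $\underset{k}{\sum}|V_k|^s\le\underset{k}{\sum}|U_k|^s$. Taking the infimum over all $\delta$-covers of $S\cap J_i$ and then letting $\delta\to 0^+$ yields $H^s(T(S\cap J_i))\le H^s(S\cap J_i)$ for every $s\ge 0$, and therefore $H_{dim}(T(S\cap J_i))\le H_{dim}(S\cap J_i)$. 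Combined with the reduction of the first paragraph, this gives $H_{dim}(T(S))\le H_{dim}(S)$.

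The only point requiring care is that the $U_k$ in the definition of $H_{\delta}^s$ are open, whereas the images $V_k=T(U_k\cap J_i)$ need not be open. I would handle this by invoking the standard fact that computing $H^s$ with arbitrary sets gives the same value as computing it with open sets: any set $V$ is contained in an open set whose diameter exceeds $diam(V)$ by an arbitrarily small amount, so enlarging each $V_k$ to such an open set changes $\underset{k}{\sum}|V_k|^s$ by an arbitrarily small amount and preserves the inequality after passing to the limit. This is essentially the only technical obstacle; the dimension estimate itself is immediate from the isometry property, and in fact no feature of $T$ beyond being nonexpanding on each of the finitely many continuity pieces is used.
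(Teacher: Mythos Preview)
Your proof is correct. The paper does not actually supply a proof of this lemma: it merely states it and remarks that the conclusion holds more generally for locally Lipschitz maps. Your argument---decomposing $S$ along the finitely many continuity intervals, using that each restriction is an isometry so $H^s$ does not increase, and appealing to finite stability of Hausdorff dimension---is exactly the standard justification one would give, and your handling of the open-cover technicality is the right way to dispose of it.
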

This holds for locally Lipshitz maps as well, but this fact is unnecessary for the present paper.

\subsection{Estimates towards calculating the Hausdorff dimension for ergodic measures of IETs}
For upper bounds to the Hausdorff dimension for an ergodic measure of an IET the following proposition is useful.
\begin{prop} Let $T$ be a $\mu$-ergodic IET and the $H_{dim}(\mu)=t$. If $S$ is a set such that $H_{dim}(S)<t$ then $\mu(S)=0$.
\end{prop}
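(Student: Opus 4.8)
The plan is to argue by contradiction, using ergodicity to spread $S$ around by the dynamics into a fully invariant set of full $\mu$-measure, while exploiting the fact that $T$ and $T^{-1}$ are piecewise isometries to ensure this spreading does not raise the Hausdorff dimension. If $\mu(S)>0$ this produces a Borel set of full measure and dimension strictly below $t$, contradicting $H_{dim}(\mu)=t$.

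Concretely, suppose toward a contradiction that $H_{dim}(S)<t$ yet $\mu(S)>0$. Since $\mu$ is a Borel probability measure on $[0,1)$ it is inner regular, so I may pass to a Borel set $B\subseteq S$ with $\mu(B)>0$; note $H_{dim}(B)\leq H_{dim}(S)<t$. (This is the only place where the possible non-measurability of $S$ needs attention.) Now set $\tilde B=\bigcup_{n\in\mathbb{Z}}T^n(B)$. Because an IET is a Borel isomorphism off a finite set, each $T^n(B)$ is Borel and hence so is $\tilde B$. The set $\tilde B$ is fully $T$-invariant, so by ergodicity $\mu(\tilde B)\in\{0,1\}$; since $\tilde B\supseteq B$ has positive measure, $\mu(\tilde B)=1$.

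It remains to control the dimension. Each $T^n$ with $n\geq 0$ is an IET, and each $T^{-n}=(T^{-1})^n$ is an IET for the inverse permutation, so all these maps are piecewise isometries. By the lemma bounding Hausdorff dimension under piecewise isometries, $H_{dim}(T^n(B))\leq H_{dim}(B)$ for every $n\in\mathbb{Z}$, and by countable stability of Hausdorff dimension, $H_{dim}(\tilde B)=\sup_n H_{dim}(T^n(B))\leq H_{dim}(B)<t$. Thus $\tilde B$ is a Borel set with $\mu(\tilde B)=1$ and $H_{dim}(\tilde B)<t$, so by definition $H_{dim}(\mu)\leq H_{dim}(\tilde B)<t$, contradicting $H_{dim}(\mu)=t$.

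Given the two stated lemmas, the argument is short, so the work is bookkeeping rather than any deep obstacle. The main point to watch is the finite set of discontinuities of $T$: the maps $T^{\pm n}$ are bijections and isometries only away from finitely many points, but the full orbit of this finite set is countable, hence of $\mu$-measure zero and of Hausdorff dimension zero, so it may be discarded without affecting either $\mu(\tilde B)=1$ or the dimension estimate. A secondary point, already noted, is confirming that $\tilde B$ stays Borel, which holds because $T$ is a Borel isomorphism off its discontinuity set and a countable union of Borel sets is Borel.
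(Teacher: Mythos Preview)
Your argument is correct and follows the same route as the paper: use ergodicity to promote a positive-measure set to a full-measure set via its orbit, and then invoke countable stability of Hausdorff dimension together with the fact that $T$ (and $T^{-1}$) is a piecewise isometry to bound the dimension of that orbit, contradicting $H_{dim}(\mu)=t$. Your version is slightly more careful about measurability, the two-sided orbit, and the countable discontinuity set, but the substance is identical to the paper's proof.
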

\begin{proof}
This follows from the countable stability of $H_{dim}$ and ergodicity. If $\mu(S)>0$ then $\mu(\underset{i=1}{\overset{\infty}{\cup}}T^i(S))=1$ by ergodicity. However, by the countable stability of Hausdorff dimension $H_{dim}(\underset{i=1}{\overset{\infty}{\cup}}T^i(S))= H_{dim}(S)$ because $T$ is a piecewise isometry.
\end{proof}
This proposition says that one needs to only prove upper bounds on part of the measure. If $\mu(S)>0$ and $H^t(S)=0$ then $H_{dim}(\mu) \leq t$.

Below is a lemma based adapting Frostman's Lemma to our particular circumstances to provide lower bounds for the Hausdorff dimension of an ergodic measure.
\begin{lem} \label{loc est} If there exists $C$ such that $C\lambda_3(I_i^{(k)})^{\alpha}>\lambda_2(I_i^{(k)})$ for any $k$ and $i \in \{1,2,3,4\}$ then $H_{dim}(\lambda_2,d_{\lambda_3})\geq \alpha$. Likewise, if there exists a $C$ such that $C\lambda_2(I_i^{(k)})^{\alpha}>\lambda_3(I_i^{(k)})$ for any $k$ and $i \in \{1,2,3,4\}$ then $H_{dim}(\lambda_3,d_{\lambda_2})\geq \alpha$. 
\end{lem}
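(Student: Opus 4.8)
The plan is to reduce the statement to a uniform estimate on the $\lambda_2$-measure of $d_{\lambda_3}$-balls and then invoke Corollary \ref{keanefrost}. I will prove the first assertion; the second follows by interchanging the roles of the indices $2$ and $3$ throughout. Since $\lambda_3$ is nonatomic (every invariant measure of a minimal IET is), a ball $B_{d_{\lambda_3}}(x,\epsilon)=\{y:\lambda_3([x,y])<\epsilon\}$ is an interval $J$ with $\lambda_3(J)\le 2\epsilon$. Hence it suffices to prove a single geometric inequality: there is a constant $C'$ with
\[
\lambda_2(J)\le C'\,\lambda_3(J)^{\alpha}\qquad\text{for every interval } J\subseteq[0,1).
\]
Granting this, $\lambda_2\bigl(B_{d_{\lambda_3}}(x,\epsilon)\bigr)\le C'(2\epsilon)^{\alpha}$ for all $x$ and $\epsilon$, so applying Corollary \ref{keanefrost} in the metric $d_{\lambda_3}$ along $\epsilon_i=2^{-i}$ (which has bounded consecutive ratios) yields $H_{dim}(\lambda_2,d_{\lambda_3})\ge\alpha$.

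The engine is the tower partition $\mathcal P_k=\{T^j I_i^{(k)}:i\in\{1,2,3,4\},\ 0\le j<b_{k,i}\}$. By the description of $O(I_i^{(k)})$ these intervals are pairwise disjoint and tile $[0,1)$, and because $I^{(k+1)}\subset I^{(k)}$ the partitions refine as $k$ grows, with mesh tending to $0$ (the largest atom has $\lambda_3$-measure at most $\lambda_3(I^{(k)})\to 0$). The decisive point is that $\lambda_2$ and $\lambda_3$ are $T$-invariant and $T$ is a piecewise isometry, so each atom $T^j I_i^{(k)}$ has exactly the same $\lambda_2$- and $\lambda_3$-measures as its base $I_i^{(k)}$. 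Consequently the hypothesis upgrades from the four base intervals to \emph{every} atom: $\lambda_2(A)\le C\lambda_3(A)^{\alpha}$ for all $A\in\bigcup_k\mathcal P_k$.

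To estimate $\lambda_2(J)$ I cover $J$ efficiently by atoms. Let $K=K(J)$ be the coarsest level at which some atom lies entirely inside $J$; then $J$ wholly contains no atom of level $K-1$, so the level-$K$ atoms contained in $J$ form a single consecutive run lying inside at most two adjacent level-$(K-1)$ atoms, and the two leftover pieces $R^L,R^R$ at the ends of $J$ are proper sub-intervals of single atoms, to be treated by recursion into finer levels. The core estimate is a dichotomy for a run $A_1,\dots,A_t$ of consecutive level-$K$ atoms inside $J$: if $t$ is smaller than a fixed constant $N_0$ I bound $\lambda_2(\bigcup_s A_s)\le\sum_s\lambda_2(A_s)\le N_0 C\lambda_3(J)^{\alpha}$ atom by atom; if $t\ge N_0$ I instead bound $\lambda_2(\bigcup_s A_s)$ by the $\lambda_2$-measure of the one or two level-$(K-1)$ atoms containing the run and apply the hypothesis at level $K-1$. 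The recursion on $R^L,R^R$ produces on each side a sequence of runs at strictly increasing levels whose containing atoms shrink in $\lambda_3$-measure by the large factors built into $A_{m,n}$, so the contributions form a convergent geometric series dominated by $\lambda_3(J)^{\alpha}$. Summing the three parts gives $\lambda_2(J)\le C'\lambda_3(J)^{\alpha}$.

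The main obstacle is justifying the large-$t$ branch of the dichotomy, and this is where the explicit Keane combinatorics must be used rather than soft measure theory. Atom sizes within a single level are wildly non-uniform: by the estimates of Section \ref{estimates} the ratio $\lambda_3(I_3^{(k)})/\lambda_3(I_1^{(k)})$ is of order $n_{k+1}$, so a long spatial run of atoms need not a priori carry a proportional share of $\lambda_3$-mass, and the crude bound $\sum_s\lambda_3(A_s)^{\alpha}\le t^{1-\alpha}\lambda_3(J)^{\alpha}$ loses a factor $t^{1-\alpha}$ that can blow up. What rescues the argument is the precise composition of $I_3^{(k-1)}$ (and symmetrically $I_2^{(k-1)}$) into pieces of $I^{(k)}$ recorded by the third row of $A_{m_k,n_k}$: the dominant sub-atoms of type $I_3^{(k)}$, which by Lemma \ref{L3I3big} already account for all but a $\tfrac{3}{n_k}$ fraction of the mass, recur at bounded spatial gaps along the tower. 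Hence any run long enough to fall in the $t\ge N_0$ case necessarily meets a definite proportion of these dominant pieces and therefore captures a definite fraction of the $\lambda_3$-measure of the level-$(K-1)$ atom it sits in; this is exactly what makes the coarse bound $\lambda_3(\text{level-}(K-1)\text{ atom})\le C\lambda_3(J)$ legitimate. Verifying this proportionality from the matrix rows, uniformly in $k$, is the technical crux; the remaining points (nonatomicity, the ball–interval passage, and the bounded-ratio hypothesis of Corollary \ref{keanefrost}) are routine.
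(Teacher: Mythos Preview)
Your proposal is correct and follows the same route the paper takes: reduce via Frostman to the uniform interval estimate $\lambda_2(J)\le C'\lambda_3(J)^{\alpha}$, then exploit the explicit block structure of the towers $I_2^{(k)}$ and $I_3^{(k)}$ to pass from the four base intervals to arbitrary $J$. The paper's proof is terser---it phrases the combinatorial step as ``the minimum of $\log_{\lambda_3(J')}\lambda_2(J')$ over sub-runs $J'$ is attained at one of the extreme configurations'' (a single block, a block-pair, or the whole interval), which is exactly your short-run/long-run dichotomy seen from the other side: because $I_2^{(k)}$ is a periodic concatenation of $I_1^{(k+1)}\cup I_2^{(k+1)}$ and $I_3^{(k)}$ is a periodic concatenation of four-piece blocks, a run of $r$ blocks has both measures scaling linearly in $r$, so the extremal ratio occurs at $r=1$ or $r=r_{\max}$. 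Your recursion with leftovers $R^L,R^R$ and the ``definite fraction of $\lambda_3$'' observation is an equivalent and arguably clearer packaging of the same periodicity fact; the one place to be slightly careful (and where the paper is equally elliptical) is the geometric decay of the containing atoms along the recursion, since passing through an $I_4^{(k)}$- or $I_1^{(k)}$-type atom gives almost no $\lambda_3$-shrinkage in a single step---but as the paper notes, those types contain at most one copy of each sub-piece, so the next step lands in an $I_2$- or $I_3$-type atom where the genuine contraction by $m_{k+1}$ or $n_{k+1}$ kicks in.
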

\begin{proof} By Frostman's Lemma it suffices to show that for any interval $J$ we have $C\lambda_3(J)^{\alpha}> \lambda_2(J)$.
 We will show that $\log_{\lambda_3(J)} \lambda_2(J)$ is dominated by something comparable to $\log_{\lambda_3(I_i^{(t)})} \lambda_2(I_i^{(t)})$.
 This follows from the fact that $I_2^{(k)}$ and $I_3^{(k)}$ are made up of repeating images.
  To see this assume that we wish to estimate  $\log_{\lambda_3(J)} \lambda_2(J)$ mostly covered by images of $I_i^{(k+1)}$ and contained in $I_2^{(k)}$. 
  $I_2^{(k)}$ is made up of repeating unions of images of $I_1^{(k+1)} \cup I_2^{(k+1)}$.
If $J'$ varies over intervals that are unions of $I_i^{(k+1)} $ contained in $I_2^{(k)}$ then $\log_{\lambda_3(J')} \lambda_2(J')$ is minimized by choosing $J'=I_2^{(k+1)}$, $I_2^{(k+1)} \cup I_1^{(k+1)} \cup I_2^{(k+1)}$ or $J'=I_2^{(k)}$.
   In either case, $\log_{\lambda_3(J)} \lambda_2(J)$ is dominated by something proportional to $I_2^{(j)}$ for some $j$. Likewise, if $J \subset I_3^{(k)}$ for pieces in images of $I_3^{(k)}$ one either covers by all of $I_3^{(k)}$ or $I_2^{(k+1)} \cup I_4^{(k+1)} \cup I_3^{(k+1)} \cup I_1^{(k+1)} \cup I_2^{(k+1)}$. $I_1^{(k)}$ and $I_4^{(k)}$ are made up of at most 1 image of each $I_i^{(k+1)}$ for $i \in \{1,2,3,4\}$ and so reduce to these cases. Similar arguments hold for $\log_{\lambda_2(J)} \lambda_3(J)$.
\end{proof}

\begin{lem}\label{b2 est} $b_{k,2}\leq \underset{i=1}{\overset{k}{\Pi}} 2m_i$.
\end{lem}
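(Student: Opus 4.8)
The plan is to induct on $k$, using the recursion $b_{k,2}=m_k b_{k-1,2}+n_k b_{k-1,3}+b_{k-1,4}$ recorded in the remark following the definition of $b_{k,i}$. The essential point is that although this recursion couples $b_{k,2}$ to $b_{k-1,3}$ and $b_{k-1,4}$, Lemma \ref{b2 bigger} tells us that $b_{k-1,2}$ dominates each of $b_{k-1,1},b_{k-1,3},b_{k-1,4}$. Hence the entire right-hand side can be bounded in terms of $b_{k-1,2}$ alone, which is exactly the quantity the inductive hypothesis controls.

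First I would dispose of the base case. Taking $I^{(0)}=[0,1)$ so that $b_{0,i}=1$ for all $i$, one computes $b_{1,2}=|A_{m_1,n_1}e_2|_1=m_1+n_1+1$; since the standing assumptions force $n_1+1<m_1$, this is at most $2m_1$, as required.

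For the inductive step, assume $b_{k-1,2}\leq\prod_{i=1}^{k-1}2m_i$. Using Lemma \ref{b2 bigger} to replace $b_{k-1,3}$ and $b_{k-1,4}$ by $b_{k-1,2}$ in the recursion gives
\[
b_{k,2}=m_k b_{k-1,2}+n_k b_{k-1,3}+b_{k-1,4}\leq(m_k+n_k+1)\,b_{k-1,2}.
\]
The standard growth hypotheses (with $m_k\geq 3n_k$ and $n_k\geq 1$) force $n_k+1<m_k$, so that $m_k+n_k+1<2m_k$. Combining this with the inductive hypothesis yields $b_{k,2}<2m_k\,b_{k-1,2}\leq\prod_{i=1}^{k}2m_i$, which closes the induction.

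The argument is routine once Lemma \ref{b2 bigger} is available; the only step needing attention is the collapse of the coefficient $m_k+n_k+1$ to $2m_k$, and this rests entirely on the separation $n_k+1<m_k$ encoded in the standing hypotheses. I anticipate no serious obstacle.
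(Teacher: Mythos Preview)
Your argument is correct and is essentially the same as the paper's: both use the recursion $b_{k,2}=m_kb_{k-1,2}+n_kb_{k-1,3}+b_{k-1,4}$, invoke Lemma~\ref{b2 bigger} to bound $b_{k-1,3},b_{k-1,4}\leq b_{k-1,2}$, and then use $m_i>n_i+1$ to collapse the coefficient to $2m_i$ and close the induction. You have simply supplied the base case and written the inductive step out in full.
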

\begin{proof}Recall $b_{k,2}=m_kb_{k-1,2}+n_k b_{k-1,3} +b_{k-1,4}$. By Lemma \ref{b2 bigger} $b_{i,2} \geq b_{i,j}$. By our assumptions $m_i>n_i+1$. The lemma follows by induction.
\end{proof}
\begin{lem} \label{b3 est} $\underset{i=1}{\overset{k}{\prod}} n_{i} <b_{k,3}$.
\end{lem}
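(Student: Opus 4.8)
The plan is to prove the estimate by induction on $k$, using the recursion for $b_{k,3}$ recorded in the remark, namely $b_{k,3}=b_{k-1,1}+(n_k-1)b_{k-1,3}+b_{k-1,4}$, together with the analogous identity $b_{k,4}=b_{k-1,1}+n_kb_{k-1,3}+b_{k-1,4}$, which I would read off from the fourth column $(1,0,n_k,1)^{\tau}$ of $A_{m_k,n_k}$ in the expression $b_{k,4}=|A_{m_1,n_1}\cdots A_{m_k,n_k}e_4|_1$. For the base case I simply compute $A_{m_1,n_1}e_3=(1,0,n_1-1,1)^{\tau}$, so that $b_{1,3}=n_1+1>n_1$.

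The key intermediate step, which I expect to carry the real content, is the auxiliary claim that $b_{j,4}>b_{j,3}$ for every $j\geq 1$. Subtracting the two recursions gives $b_{j,4}-b_{j,3}=b_{j-1,3}>0$ for $j\geq 2$, and the base case $j=1$ follows from $b_{1,4}=n_1+2>n_1+1=b_{1,3}$. This is the observation that makes the induction go through, because it lets me absorb the troublesome $-b_{k-1,3}$ coming from the coefficient $(n_k-1)$.

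With the claim in hand, the inductive step is routine. Rewriting the recursion as $b_{k,3}=n_kb_{k-1,3}+\bigl(b_{k-1,1}+b_{k-1,4}-b_{k-1,3}\bigr)$ and using $b_{k-1,4}>b_{k-1,3}$ together with $b_{k-1,1}>0$, the parenthesized term is positive, so $b_{k,3}>n_kb_{k-1,3}$. Applying the inductive hypothesis $b_{k-1,3}>\prod_{i=1}^{k-1}n_i$ then yields $b_{k,3}>n_k\prod_{i=1}^{k-1}n_i=\prod_{i=1}^{k}n_i$, completing the induction. Here I only use that each $b_{j,i}$ is a positive integer (the entries of the nonnegative integer matrices $A_{m_i,n_i}$ applied to a standard basis vector are nonnegative, and the relevant coordinate sums are strictly positive) and that $n_k\geq 1$; no growth hypotheses on $m_i,n_i$ are required.

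The main obstacle is precisely the $(n_k-1)$ coefficient: a naive bound would lose a factor and give only $b_{k,3}\geq(n_k-1)b_{k-1,3}$, which is too weak to reproduce $\prod_{i=1}^k n_i$. The auxiliary inequality $b_{j,4}>b_{j,3}$ is exactly what recovers the missing copy of $b_{k-1,3}$, and establishing it is the one place where I must track a second coordinate rather than iterate the recursion for $b_{\cdot,3}$ in isolation.
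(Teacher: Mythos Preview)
Your argument is correct and follows essentially the same route as the paper: both use the recursion $b_{k,3}=b_{k-1,1}+(n_k-1)b_{k-1,3}+b_{k-1,4}$, establish the auxiliary inequality $b_{j,4}>b_{j,3}$ (the paper does this in one line by writing out $b_{i,4}=b_{i-1,1}+n_ib_{i-1,3}+b_{i-1,4}$ and comparing), deduce $b_{k,3}>n_kb_{k-1,3}$, and conclude by induction. Your write-up is simply more explicit about the base cases and the subtraction $b_{j,4}-b_{j,3}=b_{j-1,3}$.
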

\begin{proof}Recall $b_{k,3}=b_{k,1}+(n_{k}-1)b_{k-1,3} +b_{k-1,4}$. Notice that $b_{i,4}=b_{i-1,1}+n_ib_{i-1,3}+b_{i-1,4}>b_{i,3}$ implying that $b_{k,3}>n_kb_{k-1,3}$. The lemma follows by induction.
\end{proof}
\begin{lem} \label{L3bigorbit} $\lambda_3 (O(I_3^{(k)}))>\frac 1 8$. 
\end{lem}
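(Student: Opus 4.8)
The plan is to realize $[0,1)$ as a Kakutani (Rokhlin) tower over the base $I^{(k)}=\bigsqcup_{j=1}^4 I_j^{(k)}$. Since $\lambda_3$ is $T$-invariant, the $b_{k,j}$ pairwise disjoint pieces $T^0I_j^{(k)},\dots,T^{b_{k,j}-1}I_j^{(k)}$ that make up $O(I_j^{(k)})$ all have the same $\lambda_3$-measure, so $\lambda_3(O(I_j^{(k)}))=b_{k,j}\lambda_3(I_j^{(k)})$; and since the four columns partition $[0,1)$ up to a null set we have $\sum_{j=1}^4 b_{k,j}\lambda_3(I_j^{(k)})=1$. Writing $r_j=\lambda_3(I_j^{(k)})/\lambda_3(I^{(k)})$ this gives
$$\lambda_3(O(I_3^{(k)}))=\frac{b_{k,3}\,r_3}{\sum_{j=1}^4 b_{k,j}\,r_j},$$
so the lemma reduces to the inequality $\sum_{j=1}^4 b_{k,j}r_j<8\,b_{k,3}r_3$. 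In fact I will show the left side is at most a small multiple of $b_{k,3}r_3$ (well under $8$), so the stated constant is comfortably met.

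First I would collect the needed estimates on the base ratios $r_j$. Lemma~\ref{L3I3big} gives $r_3\ge 1-3/n_{k+1}$, which (since the hypotheses force $n_{k+1}$ large) is bounded below, say $r_3\ge\tfrac12$. Lemmas~\ref{L3I4small} and~\ref{L3I1small} give $r_4,r_1\le 1/n_{k+1}$, while Lemma~\ref{L3I2small} together with the growth condition $2m_{k+1}\le n_{k+2}+1$ yields $r_2\le 1/(n_{k+1}+1)$. Thus all three of $r_1,r_2,r_4$ are of order $1/n_{k+1}$, while $r_3$ stays bounded below.

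The crux is to control the large height ratios $b_{k,1}/b_{k,3}$ and $b_{k,2}/b_{k,3}$ against the smallness of $r_1,r_2$. Using the recursions $b_{k,2}=m_kb_{k-1,2}+n_kb_{k-1,3}+b_{k-1,4}$ and $b_{k,3}=b_{k-1,1}+(n_k-1)b_{k-1,3}+b_{k-1,4}$, the identity $b_{k-1,1}=b_{k-1,2}-b_{k-2,2}$ (immediate from $A_{m,n}(e_2-e_1)=e_2$), and the fact that $b_{k-1,2}$ dominates the other heights (Lemma~\ref{b2 bigger}), I would bound the numerator of $b_{k,2}/b_{k,3}$ by $(m_k+n_k+1)b_{k-1,2}$ and its denominator below by $b_{k-1,1}\ge b_{k-1,2}(1-1/m_{k-1})$, using $b_{k-2,2}\le b_{k-1,2}/m_{k-1}$. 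Since $n_k<m_k$ and $m_{k-1}$ is large, this gives $b_{k,2}/b_{k,3}\le c\,m_k$ for an explicit constant $c$ near $1$, and the same bound for $b_{k,1}/b_{k,3}$ since $b_{k,1}<b_{k,2}$. The growth condition $m_k\le\tfrac12(n_{k+1}+1)$ then tames the products: $b_{k,1}r_1/(b_{k,3}r_3)$ and $b_{k,2}r_2/(b_{k,3}r_3)$ are each bounded by a constant of order $c\,m_k/n_{k+1}<c$, while the $I_4$-term is of order $1/n_{k+1}$. Summing over $j$ gives $\sum_j b_{k,j}r_j<8\,b_{k,3}r_3$, hence $\lambda_3(O(I_3^{(k)}))>\tfrac18$.

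The main obstacle is precisely the third step: a priori $b_{k,2}/b_{k,3}$ could be as large as a running product of the $m_i/n_i$, and it is only the sharp comparison $b_{k,3}\ge b_{k-1,1}=b_{k-1,2}-b_{k-2,2}$ --- that $b_{k,3}$ is, up to the tiny correction $b_{k-2,2}/b_{k-1,2}\le 1/m_{k-1}$, as large as the previous maximal height --- which collapses this ratio to the single factor $m_k$ that the growth condition $m_k\le\tfrac12(n_{k+1}+1)$ then cancels against $r_1,r_2\le 1/n_{k+1}$. Carrying out these comparisons with honest (if loose) constants rather than merely asymptotically is what makes the constant $\tfrac18$ safe.
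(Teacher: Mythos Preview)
Your argument is correct and follows the same outline as the paper: write $\lambda_3(O(I_3^{(k)}))=b_{k,3}r_3/\sum_j b_{k,j}r_j$, bound $r_1,r_2,r_4$ by $1/n_{k+1}$ via Lemmas~\ref{L3I1small}, \ref{L3I2small}, \ref{L3I4small}, and then control the height ratios $b_{k,i}/b_{k,3}$. The paper simply asserts the key inequality $n_{k+1}b_{k,3}>\tfrac12 b_{k,i}$; your use of $b_{k,3}\ge b_{k-1,1}=b_{k-1,2}-b_{k-2,2}\ge (1-1/m_{k-1})b_{k-1,2}$ together with $b_{k,2}\le (m_k+n_k+1)b_{k-1,2}$ and $m_k\le\tfrac12(n_{k+1}+1)$ is precisely the justification the paper omits.
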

\begin{proof}For any $i$ we have $n_{k+1}b_{k,3}>\frac 1 2 b_{k,i}$. To complete the proof consider
 $\underset{i \neq 3}{\sum}b_{k,i} \frac{\lambda_3(I_i^{(k)})}{\lambda_3(I^{(k)})}< \frac{b_{k,1}}{n_{k+1}}+ \frac{b_{k,2}}{n_{k+1}}+\frac{b_{k,4}}{n_{k+1}}$
  while $b_{k,3}\frac{\lambda_3(I_3^{(k)})}{\lambda_3(I^{(k)})}$ is proportional to $b_{k,3}$.
\end{proof}
This Lemma establishes that $\lambda_3(I^{(k)})$ is proportional to $b_{k,3}^{-1}$.
\begin{lem}\label{L2bigorbit} $\lambda_2(O(I_2^{(k)}))>\frac 1 4$.
\end{lem}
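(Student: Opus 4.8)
The plan is to identify $O(I_2^{(k)})$ with a Rokhlin tower over the base $I_2^{(k)}$, compute its $\lambda_2$--measure exactly from $T$--invariance, and then reduce the claim to the single fact that $I_2^{(k)}$ carries more than a quarter of the $\lambda_2$--mass of $I^{(k)}$.

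First I would record the two identities furnished by the first--return structure on $I^{(k)}$. Because $O(I_2^{(k)})=\bigcup_{t=0}^{b_{k,2}-1}T^t(I_2^{(k)})$ is a disjoint union of $b_{k,2}$ sets and $\lambda_2$ is $T$--invariant, we get $\lambda_2(O(I_2^{(k)}))=b_{k,2}\,\lambda_2(I_2^{(k)})$. Because the four towers $O(I_i^{(k)})$ partition $[0,1)$ up to a null set, we get $\sum_{i=1}^{4}b_{k,i}\,\lambda_2(I_i^{(k)})=1$.

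Next I would turn the partition identity into a lower bound for $b_{k,2}$. By Lemma \ref{b2 bigger} we have $b_{k,2}\geq b_{k,i}$ for every $i$, so $1=\sum_i b_{k,i}\lambda_2(I_i^{(k)})\leq b_{k,2}\sum_i\lambda_2(I_i^{(k)})=b_{k,2}\,\lambda_2(I^{(k)})$, i.e.\ $b_{k,2}\geq \lambda_2(I^{(k)})^{-1}$. Substituting into the first identity gives $\lambda_2(O(I_2^{(k)}))=b_{k,2}\lambda_2(I_2^{(k)})\geq \lambda_2(I_2^{(k)})/\lambda_2(I^{(k)})$, so it suffices to prove $\lambda_2(I_2^{(k)})/\lambda_2(I^{(k)})>\tfrac14$.

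This last inequality is the real content, and it is where I expect the difficulty to lie: the bound stated in Lemma \ref{L2I2big} is only of order $\tfrac{3}{16}$, so I cannot quote it directly and must instead extract the sharper estimate hidden in its proof. That proof shows the set $\{v:|v|_1=1,\ v[2]>\tfrac14\}$ is invariant under every $\bar A_{m,n}$ with $m\geq 3n$, which holds here since $m_t\geq 3(n_t+1)$. Applying this to each finite partial product $\prod_{t=2}^{r}\bar A_{m_{k+t},n_{k+t}}e_2$ (which starts from $e_2$, where the second coordinate is $1$) and then hitting it once more with $\bar A_{m_{k+1},n_{k+1}}$ forces the second coordinate to be at least $\tfrac{m_{k+1}}{m_{k+1}+4n_{k+1}+8}$, and this exceeds $\tfrac14$ precisely because $3m_{k+1}>4n_{k+1}+8$. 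Since this lower bound depends only on the outermost matrix and is uniform in $r$, it survives the limit $r\to\infty$, yielding $\lambda_2(I_2^{(k)})/\lambda_2(I^{(k)})>\tfrac14$ and completing the argument. The only genuine obstacle is recovering this strict bound with a gap that persists in the limit; the remaining steps are routine bookkeeping with the tower decomposition.
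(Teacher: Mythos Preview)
Your argument is correct and follows the same route as the paper: use Lemma~\ref{b2 bigger} to deduce $\lambda_2(O(I_2^{(k)}))\geq \lambda_2(I_2^{(k)})/\lambda_2(I^{(k)})$, then invoke the $>\tfrac14$ bound coming from the proof (not the statement) of Lemma~\ref{L2I2big}. The paper's proof is a single line that leaves the second step implicit; you have correctly identified that the stated bound in Lemma~\ref{L2I2big} is too weak and that one must instead use the invariance of $\{v:v[2]>\tfrac14\}$ established in its proof, and your verification that $\tfrac{m_{k+1}}{m_{k+1}+4n_{k+1}+8}>\tfrac14$ under Keane's hypothesis $m_{k+1}\geq 3(n_{k+1}+1)$ is accurate.
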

\begin{proof} By Lemma \ref{b2 bigger} $b_{k,2}>b_{k,i}$  and so $\lambda_2(O(I_2^{(k)}))>\frac {\lambda_2(I_2^{(k)})}{\lambda_2(I^{(k)})}$.
\end{proof}
This Lemma establishes the $\lambda_2(I^{(k)})$ is proportional to $b_{k,2}^{-1}$.
\begin{prop} $H_{dim}(\lambda_2,d_{\lambda_3}) \leq H_{dim}(\LS O(I_2^{(k)}),d_{\lambda_3})$.
\end{prop}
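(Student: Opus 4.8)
The plan is to reduce everything to the single claim that the limsup set $E:=\LS O(I_2^{(k)})$ satisfies $\lambda_2(E)=1$. Once this is known the proposition is immediate: by the definition of the Hausdorff dimension of a measure, $E$ is a Borel set of full $\lambda_2$-measure and hence an admissible competitor in the infimum defining $H_{dim}(\lambda_2,d_{\lambda_3})$, so that $H_{dim}(\lambda_2,d_{\lambda_3})\leq H_{dim}(E,d_{\lambda_3})=H_{dim}(\LS O(I_2^{(k)}),d_{\lambda_3})$. Thus the whole content is to verify that $\lambda_2$-almost every point lies in infinitely many of the towers $O(I_2^{(k)})$.

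First I would record positivity. Writing $B_n=\underset{k\geq n}{\cup}O(I_2^{(k)})\supseteq O(I_2^{(n)})$, the sets $B_n$ decrease to $E$, and by Lemma \ref{L2bigorbit} each satisfies $\lambda_2(B_n)\geq\lambda_2(O(I_2^{(n)}))>\tfrac14$. Since $\lambda_2$ is a finite measure, $\lambda_2(E)=\lim_n\lambda_2(B_n)\geq\tfrac14>0$. This is the only place the measure lower bound of Lemma \ref{L2bigorbit} is used for positivity; it will be boosted to full measure by ergodicity.

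The crux is to show that $E$ is $T$-invariant modulo $\lambda_2$-null sets, for then ergodicity of $\lambda_2$ forces $\lambda_2(E)\in\{0,1\}$, and the previous paragraph upgrades this to $\lambda_2(E)=1$. Each tower is $O(I_2^{(k)})=\underset{i=0}{\overset{b_{k,2}-1}{\cup}}T^i(I_2^{(k)})$, so $T^{-1}(O(I_2^{(k)}))$ and $O(I_2^{(k)})$ differ only in their bottom and top floors: the symmetric difference is contained in $T^{-1}(I_2^{(k)})\cup T^{b_{k,2}-1}(I_2^{(k)})$, which has $\lambda_2$-measure at most $2\lambda_2(I_2^{(k)})$ because $T$ preserves $\lambda_2$. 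Since $T^{-1}$ commutes with countable unions and intersections, $T^{-1}E=\LS T^{-1}(O(I_2^{(k)}))$, and a point lying in $T^{-1}E$ but not in $E$ (or vice versa) must lie in $T^{-1}(O(I_2^{(k)}))\triangle O(I_2^{(k)})$ for infinitely many $k$. By the Borel--Cantelli lemma, $\lambda_2(T^{-1}E\triangle E)=0$ as soon as $\sum_k\lambda_2(I_2^{(k)})<\infty$.

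The summability is the main obstacle and the step that makes the argument quantitative. Here I would use $\lambda_2(I_2^{(k)})\leq\lambda_2(I^{(k)})<b_{k,2}^{-1}$ (the estimate recorded with Lemma \ref{L2bigorbit}) together with the fast growth of $b_{k,2}$: from the recursion $b_{k,2}=m_kb_{k-1,2}+n_kb_{k-1,3}+b_{k-1,4}\geq m_kb_{k-1,2}$ and the standing hypothesis $m_k\geq 3(n_k+1)\geq 6$ one gets $b_{k,2}\geq 6^{k}$, whence $\sum_k\lambda_2(I_2^{(k)})\leq\sum_k b_{k,2}^{-1}<\infty$. With invariance and positivity both in hand, ergodicity of $\lambda_2$ gives $\lambda_2(E)=1$, and the proposition follows from the reduction in the first paragraph.
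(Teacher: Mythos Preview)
Your argument is correct and follows the same route as the paper: show that $E=\LS O(I_2^{(k)})$ has positive $\lambda_2$-measure via Lemma~\ref{L2bigorbit}, that it is $T$-invariant modulo a $\lambda_2$-null set, and then invoke ergodicity to get $\lambda_2(E)=1$. The only difference is that you justify almost-invariance through Borel--Cantelli and summability of $\lambda_2(I_2^{(k)})$, whereas the paper simply cites $\lambda_2(I_2^{(k)})\to 0$ (which also suffices here, since the top and bottom floors of the towers are contained in $T^{-1}(I^{(k)})$ and the $I^{(k)}$ are nested with measure tending to zero).
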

\begin{proof} $\LS O(I_2^{(k)})$ has positive $\lambda_2$ measure and is $T$ invariant except for a set of measure zero (because $\lambda_2(I_2^{(k)}) \to 0$). By ergodicity it has full measure.
\end{proof}
\begin{prop} \label{L2 up est} $H_{dim}(\lambda_2,d_{\lambda_3}) \leq \underset{k \to \infty}{\liminf} \log_{\lambda_3(I_2^{(k)})} b_{k,2}^{-1}$.
\end{prop}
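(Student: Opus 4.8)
The plan is to combine an explicit interval cover of a tower (which is what upper bounds to Hausdorff dimension want) with a passage to a subsequence realizing the liminf; the latter is the crucial move. Write $\alpha_k=\log_{\lambda_3(I_2^{(k)})}b_{k,2}^{-1}$, so that $b_{k,2}=\lambda_3(I_2^{(k)})^{-\alpha_k}$, and put $\alpha=\liminf_k\alpha_k$. Fix $s>\alpha$ and choose a subsequence $k_1<k_2<\cdots$ with $\alpha_{k_j}\to\alpha$, so that $s-\alpha_{k_j}\ge (s-\alpha)/2>0$ for all large $j$. The reason to work along a subsequence, rather than with the full limsup as in the previous proposition, is to discard the ``bad'' levels where $\alpha_k\ge s$; the naive cover over all $k\ge n$ has divergent $s$-cost because of these levels, whereas the subsequence avoids them.

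First I would show that the subsequential limsup $L=\bigcap_{n}\bigcup_{j\ge n}O(I_2^{(k_j)})$ still carries full $\lambda_2$-measure. By Lemma \ref{L2bigorbit} each $O(I_2^{(k_j)})$ has $\lambda_2$-measure exceeding $\tfrac14$, so the reverse Fatou inequality for sets gives $\lambda_2(L)\ge\limsup_j\lambda_2(O(I_2^{(k_j)}))\ge\tfrac14$. Exactly as in the proof of the preceding proposition, $L$ is $T$-invariant up to a $\lambda_2$-null set because $\lambda_2(I_2^{(k_j)})\to0$ (the top and bottom levels of the towers shrink), and ergodicity of $\lambda_2$ then upgrades positivity to $\lambda_2(L)=1$. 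This is legitimate precisely because the bound of Lemma \ref{L2bigorbit} holds at \emph{every} level, hence along the subsequence. Consequently $H_{dim}(\lambda_2,d_{\lambda_3})\le H_{dim}(L,d_{\lambda_3})$.

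Next I would bound $H_{dim}(L,d_{\lambda_3})$ by the obvious cover. For each $n$ the family $\{O(I_2^{(k_j)})\}_{j\ge n}$ covers $L$, and each $O(I_2^{(k_j)})$ is the disjoint union of the $b_{k_j,2}$ images $T^t(I_2^{(k_j)})$, $0\le t<b_{k_j,2}$. Since these towers contain no discontinuity of $T$ before first return and $\lambda_3$ is $T$-invariant and non-atomic, every such image is an interval of $d_{\lambda_3}$-diameter exactly $\lambda_3(I_2^{(k_j)})$; as $\lambda_3(I_2^{(k_j)})\to0$ this is a genuine $\delta$-cover for $n$ large, with $s$-cost
\[ \sum_{j\ge n} b_{k_j,2}\,\lambda_3(I_2^{(k_j)})^{s}=\sum_{j\ge n}\lambda_3(I_2^{(k_j)})^{\,s-\alpha_{k_j}}. \]
Because $s-\alpha_{k_j}\ge(s-\alpha)/2>0$ eventually and $\lambda_3(I_2^{(k_j)})\le\lambda_3(I^{(k_j)})<b_{k_j,3}^{-1}$ decays super-exponentially (Lemmas \ref{L3bigorbit} and \ref{b3 est} together with the growth condition $n_{k+1}\ge 2m_k$), this series converges and its tail tends to $0$. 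Hence $H^{s}(L,d_{\lambda_3})=0$, so $H_{dim}(L,d_{\lambda_3})\le s$, and letting $s\downarrow\alpha$ yields $H_{dim}(\lambda_2,d_{\lambda_3})\le\alpha$.

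I expect the main obstacle to be the full-measure claim for the \emph{subsequential} limsup: the interplay of the reverse Fatou bound, the approximate $T$-invariance coming from $\lambda_2(I_2^{(k_j)})\to0$, and ergodicity is exactly what legitimizes discarding the bad levels, and everything hinges on it. A minor supporting point to check is that the images $T^t(I_2^{(k_j)})$ really are intervals (no interior discontinuity before first return), so that their $d_{\lambda_3}$-diameter equals $\lambda_3(I_2^{(k_j)})$; the covering computation itself is then routine.
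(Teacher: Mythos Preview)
Your argument is correct and follows essentially the same route as the paper: pass to a subsequence realizing the $\liminf$, observe that the subsequential $\limsup$ of the towers $O(I_2^{(k_j)})$ has positive $\lambda_2$-measure by Lemma~\ref{L2bigorbit}, and then use the naive cover by the $b_{k_j,2}$ levels of each tower to bound the $s$-cost. The only cosmetic differences are that the paper stops at positive measure (invoking the earlier proposition that $\mu(S)>0$ and $H^t(S)=0$ force $H_{dim}(\mu)\le t$) rather than upgrading to full measure via ergodicity as you do, and that you supply an explicit reason for the convergence of $\sum_j \lambda_3(I_2^{(k_j)})^{\,s-\alpha_{k_j}}$ where the paper simply asserts it.
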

\begin{proof} Assume that $\underset{k \to \infty}{\liminf} \log_{\lambda_3(I_2^{(k)})} b_{k,2}^{-1}=s$. It suffices to show that $H_{dim}(\lambda_2,d_{\lambda_3})<s+\epsilon$ for all $\epsilon>0$. Let $k_1,k_2,...$ be an increasing sequence of natural numbers such that $\log_{\lambda_3(I_2^{(k_t})} b_{k_t,2}^{-1}<s+\epsilon$ for all $t$. Consider $\LS O(I_2^{(k_i)})$. It has positive  $\lambda_2$ measure by Lemma \ref{L2bigorbit}. The naive covering shows that $H^{s+\epsilon}(\LS O(I_2^{(k_i)}))=0$. That is, fix $\delta>0$ and choose $i$ such that $\lambda_3(I_2^{(k_2)})<\delta$. We bound $ H^{s+\epsilon}_{\delta}( \LS O(I_2^{(k_i)}))$ by covering each $O(I_2^{(k_i)})$ by $b_{k_i,2}$ images of $I_2^{(k_i)}$. By the fact that $\log_{\lambda_3(I_2^{(k_i)})} b_{k_i,2}^{-1}<s+\epsilon$ for all $i$  it follows that $\underset{i=1}{\overset{\infty}{\sum}} b_{k_i,2}(\lambda_3(I_2^{(k_i)}))^{s+2\epsilon}<\infty$ and we see that $H_{dim}(\lambda_2,d_{\lambda_3})<s+2\epsilon$ for any $\epsilon>0$.
\end{proof}
\begin{prop}\label{L2 low est} $ H_{dim}(\lambda_2,d_{\lambda_3}) \geq   \underset{k \to \infty}{\liminf} \log_{\lambda_3(I_2^{(k)})} (\lambda_2(I_2^{(k)}))$.
\end{prop}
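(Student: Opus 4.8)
The plan is to deduce the bound from Lemma~\ref{loc est}. Set $\alpha=\underset{k\to\infty}{\liminf}\log_{\lambda_3(I_2^{(k)})}\lambda_2(I_2^{(k)})$ and, for brevity, write $R_i^{(k)}=\log_{\lambda_3(I_i^{(k)})}\lambda_2(I_i^{(k)})$. I claim it suffices to prove that $\underset{k\to\infty}{\liminf}R_i^{(k)}\ge\alpha$ for every $i\in\{1,3,4\}$ as well (the case $i=2$ being the definition of $\alpha$). Indeed, granting this, fix any $\alpha'<\alpha$; since $\ln\lambda_2(I_i^{(k)})-\alpha'\ln\lambda_3(I_i^{(k)})=\ln\lambda_3(I_i^{(k)})\,(R_i^{(k)}-\alpha')$ and $\ln\lambda_3(I_i^{(k)})\to-\infty$ while $R_i^{(k)}-\alpha'$ is eventually positive, this quantity is bounded above over all $i$ and $k$ (the finitely many small $k$ contribute only finitely many values). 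Hence some $C$ satisfies $C\lambda_3(I_i^{(k)})^{\alpha'}>\lambda_2(I_i^{(k)})$ for all $i,k$, so Lemma~\ref{loc est} gives $H_{dim}(\lambda_2,d_{\lambda_3})\ge\alpha'$, and letting $\alpha'\uparrow\alpha$ proves the proposition.

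The content is therefore that $I_2^{(k)}$ realizes the smallest log-ratio. I would normalize against the domain $I^{(k)}$, using $\lambda_2(I^{(k)})\asymp b_{k,2}^{-1}$ and $\lambda_3(I^{(k)})\asymp b_{k,3}^{-1}$ (Lemmas~\ref{L2bigorbit} and \ref{L3bigorbit}), and set $P=|\ln\lambda_2(I^{(k)})|$, $Q=|\ln\lambda_3(I^{(k)})|$, both tending to $+\infty$. The two-sided ratio estimates of Section~\ref{estimates} then write $R_i^{(k)}=\frac{P+a_i}{Q+c_i}$ up to an $o(1)$ error absorbing the bounded multiplicative constants (these become negligible once divided by $Q\to\infty$). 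Explicitly: Lemma~\ref{L2I2big} and the trivial bound give $a_2=O(1)$, while Lemmas~\ref{L3I2big} and \ref{L3I2small} give $c_2=\ln\frac{n_{k+1}n_{k+2}}{m_{k+1}}$; Lemmas~\ref{L2I3big}, \ref{L2I3small} give $a_3=\ln\frac{m_{k+1}}{n_{k+1}}$ and Lemma~\ref{L3I3big} gives $c_3=o(1)$; Lemmas~\ref{L2I1big}, \ref{L2I1small} give $a_1=\ln\frac{m_{k+1}m_{k+2}}{n_{k+2}}$ and Lemmas~\ref{L3I1big}, \ref{L3I1small} give $c_1=\ln n_{k+1}$; Lemmas~\ref{L2I4big}, \ref{L2I4small} give $a_4=\ln m_{k+1}$ and Lemmas~\ref{L3I4big}, \ref{L3I4small} give $c_4=\ln n_{k+1}$. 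Since each $I_i^{(k)}\subset I^{(k)}$, the corrections $a_i,c_i$ are all nonnegative, and the displayed formulas give their magnitudes.

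For $i=3$, since $a_3\ge 0$ and $c_3\le c_2$, one has $\frac{P+a_3}{Q+c_3}\ge\frac{P}{Q+c_3}\ge\frac{P}{Q+c_2}=R_2^{(k)}$. For $i=1$ and $i=4$ both the numerator and the denominator acquire positive corrections, so I would clear denominators: the desired $\frac{P+a_i}{Q+c_i}\ge\frac{P}{Q+c_2}$ reduces to $P\,(c_2-c_i)+a_i\,(Q+c_2)\ge 0$. Here $c_1=c_4=\ln n_{k+1}$, so $c_2-c_i=\ln\frac{n_{k+2}}{m_{k+1}}$, and the upper Keane bound $m_{k+1}\le\frac12(n_{k+2}+1)$ forces $c_2-c_i\ge\ln 2-o(1)>0$; together with $a_i\ge 0$ both terms are nonnegative. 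Thus $R_i^{(k)}\ge R_2^{(k)}-o(1)$ for $i\in\{1,3,4\}$, whence $\underset{k\to\infty}{\liminf}R_i^{(k)}\ge\alpha$.

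I expect the main obstacle to be bookkeeping rather than any single hard estimate: correctly identifying which level's parameters $m_{k+1},n_{k+1},m_{k+2},n_{k+2}$ control $a_i$ and $c_i$, keeping the corrections accurate to within the $O(1)$ slack of the two-sided bounds, and verifying that the vanishing $o(1)$ errors never upset a comparison in the $\liminf$. It is worth noting that both halves of Keane's growth condition enter: the lower bound $m_{k+1}\ge 3(n_{k+1}+1)$ pins down the sizes of the numerator corrections (e.g.\ it makes $a_3=\ln\frac{m_{k+1}}{n_{k+1}}$ genuinely positive), while the upper bound $m_{k+1}\le\frac12(n_{k+2}+1)$ is exactly what guarantees $c_2-c_i>0$, i.e.\ that $I_2^{(k)}$ is the extremal interval. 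Once $\underset{k\to\infty}{\liminf}R_i^{(k)}\ge\alpha$ holds for all $i$, the return through Lemma~\ref{loc est} is routine.
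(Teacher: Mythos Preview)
Your proof is correct and follows the same strategy as the paper: apply Lemma~\ref{loc est}, normalize by $\lambda_i(I^{(k)})$, and show that $i=2$ is the extremal index using the two-sided estimates of Section~\ref{estimates}. The paper's proof simply asserts that the minimum is attained at $i=2$ and points to the estimates; you actually carry out the comparisons for $i=1,3,4$, check that the $O(1)$ constants from the two-sided bounds become $o(1)$ in the ratio $\frac{P+a_i}{Q+c_i}$, and explicitly identify which half of the Keane inequality $3(n_k+1)\le m_k\le\frac12(n_{k+1}+1)$ is used at each step. So the approaches coincide, with yours filling in the details the paper leaves to the reader.
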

\begin{proof}  By Lemma \ref{loc est} we have that  $$ H_{dim}(\lambda_2,d_{\lambda_3}) \geq \underset{1\leq i \leq 4}{\min}\underset{k \to \infty}{\liminf} \log_{\lambda_3(I_i^{(k)})} (\lambda_2(I_i^{(k)})).$$  Consider
$$\log_{\frac{\lambda_3(I_i^{(k)})}{\lambda_3(I^{(k)})} \lambda_3(I^{(k)})} \frac{\lambda_2(I_i^{(k)})}{\lambda_2(I^{(k)})} \lambda_2(I^{(k)}).$$ 
To determine the $i$ that attains the minimum it suffices to consider $\log_{\frac{\lambda_3(I_i^{(k)})}{\lambda_3(I^{(k)})}} \frac{\lambda_2(I_i^{(k)})}{\lambda_2(I^{(k)})}$. For all large $k$ the smallest of these is  $\log_{\frac{\lambda_3(I_2^{(k)})}{\lambda_3(I^{(k)})}} \frac{\lambda_2(I_2^{(k)})}{\lambda_2(I^{(k)})}<\ \log_{\frac {2m_{k+1}}{n_{k+1}n_{k+2}}} \frac 1 4 $ (see Section \ref{estimates}).
\end{proof}
\begin{prop} \label{L3 up est} $H_{dim}(\lambda_3,d_{\lambda_2}) \leq \underset{k \to \infty}{\liminf} \log_{\lambda_2(I_3^{(k)})} b_{k,3}^{-1}$.
\end{prop}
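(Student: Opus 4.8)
The plan is to mirror the proof of Proposition \ref{L2 up est}, interchanging the two ergodic measures throughout: I replace $\lambda_2$ by $\lambda_3$, the metric $d_{\lambda_3}$ by $d_{\lambda_2}$, the intervals $I_2^{(k)}$ by $I_3^{(k)}$, the return times $b_{k,2}$ by $b_{k,3}$, and Lemma \ref{L2bigorbit} by Lemma \ref{L3bigorbit}. Set $s=\underset{k \to \infty}{\liminf}\log_{\lambda_2(I_3^{(k)})}b_{k,3}^{-1}$. It suffices to produce, for each $\epsilon>0$, a set of positive $\lambda_3$ measure whose $d_{\lambda_2}$-Hausdorff $(s+2\epsilon)$-measure vanishes; by the principle recorded after the proposition on $\mu$-ergodic IETs (positive measure together with $H^t=0$ forces $H_{dim}(\mu)\leq t$) this yields $H_{dim}(\lambda_3,d_{\lambda_2})\leq s+2\epsilon$, and letting $\epsilon\to0$ gives the claim. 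First I would note that $T$ is a piecewise isometry for $d_{\lambda_2}$: on each continuity interval $T$ is a translation and $\lambda_2$ is $T$-invariant, so $T$ preserves $d_{\lambda_2}$-lengths of subintervals, and the countable-stability/ergodicity machinery together with the tower description apply verbatim in this metric.

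Next I would extract an increasing sequence $k_1<k_2<\cdots$ realizing the liminf, so that $\log_{\lambda_2(I_3^{(k_i)})}b_{k_i,3}^{-1}<s+\epsilon$, equivalently $b_{k_i,3}<\lambda_2(I_3^{(k_i)})^{-(s+\epsilon)}$, for every $i$. Since $\lambda_2(I_3^{(k)})\leq\lambda_2(I^{(k)})\to0$ (indeed the measure estimates of Section \ref{estimates} force super-geometric decay, so no thinning is even required), the sequence may be taken with $\underset{i=1}{\overset{\infty}{\sum}}\lambda_2(I_3^{(k_i)})^{\epsilon}<\infty$. I then consider $\LS O(I_3^{(k_i)})$. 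By Lemma \ref{L3bigorbit} each $O(I_3^{(k_i)})$ has $\lambda_3$-measure exceeding $\tfrac18$, so every tail union $\underset{i\geq n}{\cup}O(I_3^{(k_i)})$ has measure at least $\tfrac18$, and continuity from above (valid as $\lambda_3$ is a probability measure) gives $\lambda_3(\LS O(I_3^{(k_i)}))\geq\tfrac18>0$.

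The covering step is the core of the argument. The set $O(I_3^{(k_i)})$ is the disjoint union of the $b_{k_i,3}$ tower levels $T^j(I_3^{(k_i)})$, $0\leq j<b_{k_i,3}$; since these levels lie below the first return they cross no discontinuity, so each is a translate of $I_3^{(k_i)}$, hence an interval of $d_{\lambda_2}$-diameter exactly $\lambda_2(I_3^{(k_i)})$. Fixing $\delta>0$ and choosing $n$ with $\lambda_2(I_3^{(k_n)})<\delta$, the tail union $\underset{i\geq n}{\cup}O(I_3^{(k_i)})$ covers $\LS O(I_3^{(k_i)})$ and is built from these levels, so that
\[
H^{s+2\epsilon}_{\delta}\big(\LS O(I_3^{(k_i)})\big)\leq\underset{i\geq n}{\sum}b_{k_i,3}\,\lambda_2(I_3^{(k_i)})^{s+2\epsilon}<\underset{i\geq n}{\sum}\lambda_2(I_3^{(k_i)})^{\epsilon},
\]
using $b_{k_i,3}<\lambda_2(I_3^{(k_i)})^{-(s+\epsilon)}$. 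As $\delta\to0$ one lets $n\to\infty$, and the summability arranged above drives the right-hand side to $0$, giving $H^{s+2\epsilon}(\LS O(I_3^{(k_i)}))=0$ in the metric $d_{\lambda_2}$.

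I expect the main obstacle to be bookkeeping rather than conceptual: one must verify that the tower levels are genuine $d_{\lambda_2}$-intervals of a common diameter (this is exactly where $T$-invariance of $\lambda_2$ and the absence of discontinuities below first return enter) and that the covering sum converges, which is precisely what the $s+2\epsilon$ exponent accomplishes by absorbing the factor $b_{k_i,3}$ and leaving a summable $\epsilon$-power of the rapidly shrinking diameters. Combining the positive $\lambda_3$-measure from the second paragraph with the vanishing $(s+2\epsilon)$-measure from the third, the general principle yields $H_{dim}(\lambda_3,d_{\lambda_2})\leq s+2\epsilon$ for every $\epsilon>0$, and the proposition follows.
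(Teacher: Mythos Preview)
Your proposal is correct and is exactly the approach the paper takes: the paper's entire proof of this proposition is the single sentence ``The proof is similar to Proposition \ref{L2 up est},'' and you have carried out that analogy in detail, invoking Lemma \ref{L3bigorbit} in place of Lemma \ref{L2bigorbit} and the tower $O(I_3^{(k)})$ in place of $O(I_2^{(k)})$. If anything you are more careful than the paper (explicitly justifying the positive $\lambda_3$-measure of the $\limsup$ set via continuity from above, and isolating the summability of $\lambda_2(I_3^{(k_i)})^{\epsilon}$); the only quibble is that ``super-geometric'' is stronger than needed and not quite established---geometric decay of $\lambda_2(I^{(k)})$ (immediate from $b_{k,2}\geq\prod m_i$ and Keane's growth conditions) already makes $\sum_k\lambda_2(I_3^{(k)})^{\epsilon}$ convergent.
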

The proof is similar to Proposition \ref{L2 up est}.

\begin{prop} \label{L3 low est} $ H_{dim}(\lambda_3,d_{\lambda_2}) \geq   \underset{k \to \infty}{\liminf} \log_{\lambda_2(I_3^{(k)})} (\lambda_3(I_3^{(k)}))$.
\end{prop}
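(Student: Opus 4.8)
The plan is to mirror the proof of Proposition \ref{L2 low est}, interchanging the roles of $\lambda_2$ and $\lambda_3$ and of the indices $2$ and $3$. First I would invoke the second half of Lemma \ref{loc est}, which gives
$$ H_{dim}(\lambda_3,d_{\lambda_2}) \geq \underset{1\leq i \leq 4}{\min}\,\underset{k \to \infty}{\liminf}\; \log_{\lambda_2(I_i^{(k)})} \lambda_3(I_i^{(k)}).$$
It then remains to identify which index realizes this minimum and to show it is $i=3$; once that is done the right hand side is exactly $\underset{k \to \infty}{\liminf}\,\log_{\lambda_2(I_3^{(k)})}\lambda_3(I_3^{(k)})$, as claimed.

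To locate the minimizing index I would use the splitting $\lambda_\bullet(I_i^{(k)})=\tfrac{\lambda_\bullet(I_i^{(k)})}{\lambda_\bullet(I^{(k)})}\lambda_\bullet(I^{(k)})$ to write
$$\log_{\lambda_2(I_i^{(k)})}\lambda_3(I_i^{(k)})=\frac{-\ln\lambda_3(I^{(k)})-\ln\frac{\lambda_3(I_i^{(k)})}{\lambda_3(I^{(k)})}}{-\ln\lambda_2(I^{(k)})-\ln\frac{\lambda_2(I_i^{(k)})}{\lambda_2(I^{(k)})}}.$$
Since all four logarithms appearing here are nonnegative, this quotient is a mediant of the $i$-independent bulk ratio $\log_{\lambda_2(I^{(k)})}\lambda_3(I^{(k)})$ and the internal ratio $\log_{\lambda_2(I_i^{(k)})/\lambda_2(I^{(k)})}\!\big(\lambda_3(I_i^{(k)})/\lambda_3(I^{(k)})\big)$, and hence lies between them. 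Consequently it suffices to compare the internal ratios across $i$, exactly as in Proposition \ref{L2 low est}.

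The main computation is then to show that $i=3$ gives the smallest internal ratio for all large $k$. For $i=3$, Lemma \ref{L3I3big} gives $\lambda_3(I_3^{(k)})/\lambda_3(I^{(k)})\geq 1-\tfrac{3}{n_{k+1}}$, so the numerator $-\ln\big(\lambda_3(I_3^{(k)})/\lambda_3(I^{(k)})\big)\to 0$, while Lemmas \ref{L2I2big} and \ref{L2I3small} keep $\lambda_2(I_3^{(k)})/\lambda_2(I^{(k)})$ bounded away from $1$, so the denominator $-\ln\big(\lambda_2(I_3^{(k)})/\lambda_2(I^{(k)})\big)$ stays bounded away from $0$; thus the $i=3$ internal ratio tends to $0$ and is recorded by the estimate $\log_{\lambda_2(I_3^{(k)})/\lambda_2(I^{(k)})}\!\big(\lambda_3(I_3^{(k)})/\lambda_3(I^{(k)})\big)<\log_{4n_{k+1}/m_{k+1}}\!\big(1-\tfrac{3}{n_{k+1}}\big)$. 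For the remaining indices I would read off from Section \ref{estimates} that their internal ratios stay bounded away from $0$: for $i=2$ the numerator grows like $\ln\!\big(n_{k+1}n_{k+2}/m_{k+1}\big)$ (Lemmas \ref{L3I2big}, \ref{L3I2small}) against a bounded denominator (Lemma \ref{L2I2big}); for $i=4$ it behaves like $\ln n_{k+1}/\ln m_{k+1}\to 1$ (Lemmas \ref{L3I4small}, \ref{L3I4big}, \ref{L2I4big}, \ref{L2I4small}); and for $i=1$ the numerator grows like $\ln n_{k+1}$ (Lemmas \ref{L3I1small}, \ref{L3I1big}) while the denominator grows like $\ln\!\big(m_{k+1}m_{k+2}/n_{k+2}\big)$ (Lemmas \ref{L2I1small}, \ref{L2I1big}).

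The step I expect to be the main obstacle is verifying that $i=3$ genuinely realizes the minimum in the liminf, rather than merely having the smallest internal ratio. The mediant observation only pins the minimizer down relative to the bulk ratio $\log_{\lambda_2(I^{(k)})}\lambda_3(I^{(k)})$: a given index undercuts the bulk precisely when its internal ratio falls below the bulk ratio, so I must confirm, using the growth hypotheses $3(n_k+1)\le m_k\le \tfrac12(n_{k+1}+1)$ together with the two-sided estimates of Section \ref{estimates}, that the internal ratios for $i=1,2,4$ remain at or above the bulk ratio (equivalently, above the vanishing $i=3$ ratio) uniformly for large $k$. This is a bookkeeping argument with the interval estimates rather than a new idea, but it is the point at which the growth conditions are essential, and it is where I would spend the most care.
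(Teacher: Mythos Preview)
Your proposal follows the paper's own proof essentially line for line: invoke the second half of Lemma \ref{loc est}, factor $\lambda_\bullet(I_i^{(k)})$ as $\tfrac{\lambda_\bullet(I_i^{(k)})}{\lambda_\bullet(I^{(k)})}\cdot\lambda_\bullet(I^{(k)})$, and then use the estimates of Section \ref{estimates} to see that the internal ratio is smallest at $i=3$, recording the bound $\log_{\lambda_2(I_3^{(k)})/\lambda_2(I^{(k)})}\bigl(\lambda_3(I_3^{(k)})/\lambda_3(I^{(k)})\bigr)<\log_{n_{k+1}/(2m_{k+1})}\bigl(1-\tfrac{3}{n_{k+1}}\bigr)$. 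Your write-up is actually more detailed than the paper's (which simply cites Section \ref{estimates} without checking the other indices), and the bookkeeping concern you flag at the end is likewise glossed over there.
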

\begin{proof} By Lemma \ref{loc est} we have that  $$ H_{dim}(\lambda_3,d_{\lambda_2}) \geq \underset{1\leq i \leq 4}{\min}\underset{k \to \infty}{\liminf} \log_{\lambda_2(I_i^{(k)})} (\lambda_3(I_i^{(k)})).$$ 
Consider $$\log_{\frac{\lambda_2(I_i^{(k)})}{\lambda_2(I^{(k)})} \lambda_2(I^{(k)})} \frac {\lambda_3(I_i^{(k)})}{\lambda_3(I^{(k)})} \lambda_3(I^{(k)}).$$ To determine the $i$ that attains the minimum it suffices to consider $\log_{\frac{\lambda_2(I_i^{(k)})}{\lambda_2(I^{(k)})}} (\frac {\lambda_3(I_i^{(k)})}{\lambda_3(I^{(k)})}) $. The smallest of these is $\log_{\frac{\lambda_2(I_3^{(k)})}{\lambda_2(I^{(k)})}} (\frac {\lambda_3(I_3^{(k)})}{\lambda_3(I^{(k)})})<\log_{\frac{n_{k+1}}{2m_{k+1}}} (1-\frac 3 {n_{k+1}})$ (see Section \ref{estimates}).
\end{proof}


\subsection{Proofs of Theorems}
\begin{proof}[Proof of Theorem \ref{flip}] Choosing $m_{k}=n_k^k$ implies that $H_{dim}(\lambda_3, d_{\lambda_2})=0$ by Proposition \ref{L3 up est}. Likewise, choosing $n_{k+1}=m_k^k$ implies that $H_{dim}(\lambda_2,d_{\lambda_3})=0$ by Proposition \ref{L2 up est}. Choosing $m_k=4n_k$ implies that $H_{dim}(\lambda_3, d_{\lambda_2})=1$ by Proposition \ref{L3 low est}. Lastly, choosing $n_{k+1}=4m_k$ implies that $H_{dim}(\lambda_2, d_{\lambda_3})=1$ by Proposition \ref{L2 low est}. By suitable choices of $m_k$ and $n_k$ any of the four possibilities in Theorem \ref{flip} can be accomplished.
\end{proof}

\begin{proof}[Proof of Theorem \ref{onto}(a)] $H_{dim}(\lambda_2, d_{\lambda_3})$ can take any value in $[0,1]$. Pick $\alpha \in [0,1]$. Notice that $H_{dim}(\lambda_2,d_{\lambda_3}) =\liminf \frac{-\log(b_{k,2})}{\log(\frac{m_{k+1}}{n_{k+1}n_{k+2}b_{k,3}})}$. Choose ${m_{k+1}>(n_{k+1}b_{k,3})^k}$ and $n_{k+2}=\lfloor m_{k+1}^{\frac 1 {\alpha}} \rfloor$.
\end{proof}
\begin{proof}[Proof of Theorem \ref{onto}(b)] $H_{dim}(\lambda_3,d_{\lambda_2})$ can take any value in $[0,1]$. Pick $\alpha \in [0,1]$. Along an infinite subsequence of nonconsecutive $k$'s choose $m_k,n_k$ so that $n_k>b_{k-1,2}^k$ and $m_k=\lfloor n_k^{\frac 1 {\alpha}} \rfloor$. Choose $n_{k+1}=2m_{k+1}$ and $m_{k+1}=3n_{k+1}$. Notice that $\log_{\lambda_2(I_3^{(k)})}\lambda_3(I_3^{(k)})$ is proportional to $\frac{\log (\frac 1 {b_{k,3}})}{\log(\frac 1{b_{k,2}})+\log(\frac{n_{k+1}}{m_{k+1}})}$. By our assumptions we have that this is between $\alpha$ and $\alpha+\frac 2 k$. At the other $k$'s choose $m_k$ and $n_k$ to be the minimal allowed by Keane's construction (so $n_{k+1}=2m_{k} $ and $m_{k+1}=3n_{k+1}$).
\end{proof}

\subsection{Large sets of generic points}
The result of this section is Theorem \ref{generic} that the $\lambda_3$-generic points can be the complement of a set of Hausdorff dimension 0. This states that all but a tiny set of points behave typically for $\lambda_3$ at all times. Theorem \ref{generic} holds in particular when $m_k=3n_k$ and $n_{k+1}=b_{k,2}^k$.
\begin{Bob} Let $t_k(x)= \min\{n \geq 0:T^n(x) \in O(I_1^{(k)})\}$.
\end{Bob}
\begin{prop}\label{L3 gen} If $x \in \underset{n=1}{\overset{\infty}{\cup}} \underset{k=n}{\overset{\infty}{\cap}} O(I_3^{(k)})$ and $\underset{k \to \infty}{\lim} \frac{b_{k,1}}{t_k(x)}=0$ then $x$ is $\lambda_3 $ generic.
\end{prop}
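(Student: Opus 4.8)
The plan is to show directly that for every $f\in C([0,1])$ the Birkhoff averages $\frac1N\sum_{n=1}^N f(T^nx)$ converge to $\int_0^1 f\,d\lambda_3$, exploiting the Kakutani--Rokhlin tower structure at each level $k$. Write $S_m f=\sum_{j=0}^{m-1}f\circ T^j$ for the column sum. Since the towers $O(I_i^{(k)})$ partition $[0,1)$ and $\lambda_3$ is $T$-invariant, $\int f\,d\lambda_3=\sum_{i=1}^4\int_{I_i^{(k)}}S_{b_{k,i}}f\,d\lambda_3$. The first reduction is to the dominant tower: the estimates of Section \ref{estimates} together with Lemma \ref{L3bigorbit} give that $\lambda_3\big(O(I_3^{(k)})^c\big)$ is bounded by a constant multiple of $b_{k,2}/(n_{k+1}b_{k,3})$, which tends to $0$ under the growth hypotheses in force, so the terms $i\neq 3$ contribute a quantity tending to $0$ and $\int f\,d\lambda_3=\int_{I_3^{(k)}}S_{b_{k,3}}f\,d\lambda_3+o(1)$. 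The second reduction is that this column sum is nearly constant: since $T^j$ is an isometry on $I_3^{(k)}$ for $j<b_{k,3}$, the normalized sum $\tfrac1{b_{k,3}}S_{b_{k,3}}f$ oscillates on $I_3^{(k)}$ by at most $\omega_f(\Leb(I_3^{(k)}))$, where $\omega_f$ is the modulus of continuity of $f$ and $\Leb(I_3^{(k)})$ is the Lebesgue length, and this length tends to $0$. Combining the two reductions, $\tfrac1{b_{k,3}}S_{b_{k,3}}f(y)\to\int f\,d\lambda_3$ uniformly for $y\in I_3^{(k)}$.

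Next I would decompose the orbit along its returns to $I^{(k)}$. Let $w_0,w_1,\dots$ be the induced orbit of $x$ and $i(\ell)$ the index with $w_\ell\in I_{i(\ell)}^{(k)}$; then $\sum_{n=0}^{N-1}f(T^nx)$ telescopes into the complete column sums $S_{b_{k,i(\ell)}}f(w_\ell)$ together with two partial columns at the ends, each of size at most $b_{k,2}$ (the initial one being an $I_3^{(k)}$-column of size less than $b_{k,3}$, since $x\in O(I_3^{(k)})$). Using the two reductions above, the average over a window $[0,N)$ equals $\int f\,d\lambda_3$ up to the uniform column error tending to $0$, plus the fraction of time the orbit spends off the tower $O(I_3^{(k)})$, plus boundary terms of order $b_{k,2}/N$. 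Thus everything reduces to controlling the time spent in $O(I_2^{(k)})\cup O(I_4^{(k)})$, since by the definition of $t_k(x)$ the window $[0,t_k(x))$ avoids $O(I_1^{(k)})$ entirely.

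The hard part is precisely this last estimate, and it is where the hypotheses enter. The mechanism I would use is that $I_3^{(k)}$ is nearly invariant for the induced map: by Lemma \ref{L3I3big} it carries all but a $\tfrac3{n_{k+1}}$-fraction of the $\lambda_3$-mass of $I^{(k)}$, its return image lies in $I_3^{(k)}\cup I_4^{(k)}$, and, by the description in Section 1, the base $I_1^{(k)}$ is reached only after first passing through $I_4^{(k)}$. A flux estimate then bounds the number of escapes from $I_3^{(k)}$ before the first visit to $O(I_1^{(k)})$, yielding a total off-$I_3$ time comparable to $b_{k,2}/(n_{k+1}b_{k,3})$ times the elapsed time, a fraction that vanishes. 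The hypothesis $b_{k,1}/t_k(x)\to0$ guarantees in addition that $[0,t_k(x))$ is long compared with the tower heights (note $b_{k,2}$ and $b_{k,1}$ are comparable), so the two boundary partial columns, and any bounded number of excursions, occupy a vanishing fraction of the window. The genuine obstacle is converting the near-invariance/flux statement, which is naturally a statement about the invariant measure $\lambda_3$, into a bound valid for the single orbit $\{T^nx\}$ using only that it avoids $O(I_1^{(k)})$ up to time $t_k(x)$; I expect to handle this by bounding the number of induced returns to $I_4^{(k)}$ before the first return to $I_1^{(k)}$ directly from the return-map combinatorics, rather than via an ergodic average.

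Finally I would pass from windows to all $N$. Since $x\in O(I_3^{(k)})$ for all large $k$ and $t_k(x)\to\infty$, each large $N$ lies in some window $[0,t_k(x))$ with $k=k(N)\to\infty$; applying the window estimate at that level, and absorbing the leftover segment by the same column and oscillation bounds (using that $N$ is large compared with $b_{k,1}$), gives $\frac1N\sum_{n<N}f(T^nx)\to\int f\,d\lambda_3$. As $f\in C([0,1])$ was arbitrary, $x$ is $\lambda_3$-generic.
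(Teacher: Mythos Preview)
Your approach is essentially the same as the paper's, only more elaborate. The paper's proof is five lines: it cites Keane's result that $\prod \bar A_{m_k,n_k}e_3$ and $\prod \bar A_{m_k,n_k}e_4$ both converge to $\lambda_3$ (which is exactly your ``two reductions'' showing $\tfrac{1}{b_{k,3}}S_{b_{k,3}}f\to\int f\,d\lambda_3$ on $I_3^{(k)}$, proved from scratch), and then writes down the explicit itinerary of the orbit at level $k-1$: it passes through $O(I_3^{(k-1)})$ roughly $t_k/b_{k-1,3}$ times, then once through $O(I_4^{(k-1)})$, then once through $O(I_1^{(k-1)})$, and back. The hypothesis $b_{k,1}/t_k\to 0$ makes the $I_3$ portion dominate.

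What you flag as the ``hard part'' and propose to handle by a flux estimate is therefore not hard at all: the third column of $A_{m,n}$ is $(1,0,n-1,1)^T$, so a point in $O(I_3^{(k)})$ \emph{never} visits $O(I_2^{(k-1)})$ and visits $O(I_4^{(k-1)})$ exactly once per return cycle. This is precisely the ``return-map combinatorics'' you mention as your fallback, and it is the entire content of the paper's argument---no measure-theoretic flux bound is needed. So your proposal is correct, but you are working harder than necessary in two places: re-deriving Keane's convergence, and anticipating a difficulty in the excursion count that the explicit matrix column dissolves immediately.
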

\begin{proof} In the proof of Theorem 7 \cite{nonue}, Keane shows that $\underset{k=1}{\overset{\infty}{\prod}} \bar{A}_{m_k,n_k}e_3$ and $\underset{k=1}{\overset{\infty}{\prod}} \bar{A}_{m_k,n_k}e_4$ converge to $\lambda_3$. Therefore under the conditions of the hypothesis 
 $x$ is generic for $\lambda_3$. To see this, consider $b_{k-1,3}<s<b_{k,3}$. $x$ travels through $O(I_3^{(k-1)})$ $a$ times ($a=\frac{t_k}{b_{k-1,3}}-1$) then through $O(I_4^{(k-1)})$ then through $O(I_1^{(k-1)})$ then it lands back in $O(I_3^{(k-1)})$). By our assumption on $t_k$ the landing in $O(I_3^{(k-1)})$ eventually always dominates, so $x$ is $\lambda_3$-generic.
\end{proof}
\begin{prop} Under appropriate assumptions, the set of points in $\underset{n=1}{\overset{\infty}{\cup}} \underset{k=n}{\overset{\infty}{\cap}} O(I_3^{(k)})$ not satisfying the hypothesis of Proposition \ref{L3 gen} is a set of Hausdorff dimension 0.
\end{prop}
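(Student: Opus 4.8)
\subsection*{Proof proposal}

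The plan is to throw away the constraint $x\in\bigcup_{n}\bigcap_{k\geq n}O(I_3^{(k)})$ altogether, since we only need an upper bound and the set in question is contained in $G:=\{x:\limsup_{k}b_{k,1}/t_k(x)>0\}$. Writing the $\limsup$ condition out, $G=\bigcup_{m=1}^{\infty}G_m$ with $G_m:=\{x: t_k(x)\leq m\,b_{k,1}\text{ for infinitely many }k\}$, so by countable stability of Hausdorff dimension it suffices to show $H_{dim}(G_m)=0$ for each fixed $m$. Throughout I work with the metric $d_{\lambda_3}$ (which for this example is Lebesgue, hence the usual metric, as $T$ preserves the ergodic measure $\lambda_3$), and I invoke the growth hypotheses $m_k=3n_k$ and $n_{k+1}=b_{k,2}^{k}$ under which Theorem~\ref{generic} is asserted.

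Fix $m$ and $s>0$, and set $E_k:=\{x:t_k(x)\leq m\,b_{k,1}\}$, so that $G_m=\limsup_k E_k=\bigcap_N\bigcup_{k\geq N}E_k$. Since $t_k(x)\leq m b_{k,1}$ precisely when $T^j(x)\in O(I_1^{(k)})$ for some $0\leq j\leq m b_{k,1}$, we have $E_k=\bigcup_{j=0}^{\lfloor m b_{k,1}\rfloor}T^{-j}(O(I_1^{(k)}))$. Now $O(I_1^{(k)})$ is a union of $b_{k,1}$ images of $I_1^{(k)}$, each of $d_{\lambda_3}$-diameter $\lambda_3(I_1^{(k)})$ because $\lambda_3$ is $T$-invariant. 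Since $T^{-1}$ is a $4$-IET with $3$ interior discontinuities, $T^{-j}(O(I_1^{(k)}))$ is a union of at most $b_{k,1}+3j$ subintervals of images of $I_1^{(k)}$, each of $d_{\lambda_3}$-diameter at most $\lambda_3(I_1^{(k)})$. Summing over $0\leq j\leq m b_{k,1}$ shows that $E_k$ is covered by $N_k\leq C_m b_{k,1}^2$ intervals, each of diameter $\leq r_k:=\lambda_3(I_1^{(k)})$.

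It remains to estimate $N_k r_k^{s}$. By Lemma~\ref{L3I1small} and Lemma~\ref{L3bigorbit}, $r_k=\lambda_3(I_1^{(k)})\leq \tfrac{1}{n_{k+1}}\lambda_3(I^{(k)})\leq \tfrac{1}{n_{k+1}b_{k,3}}=\tfrac{1}{b_{k,2}^{k}b_{k,3}}$, while $b_{k,1}\leq b_{k,2}$ by Lemma~\ref{b2 bigger}. Hence
$$N_k r_k^{s}\leq C_m\,b_{k,2}^{2}\bigl(b_{k,2}^{k}b_{k,3}\bigr)^{-s}=C_m\,b_{k,2}^{\,2-ks}\,b_{k,3}^{-s}.$$
For $k\geq 3/s$ the exponent $2-ks$ is negative, so $N_k r_k^{s}\leq C_m b_{k,3}^{-s}$; and since $b_{k,3}>\prod_{i\leq k}n_i$ by Lemma~\ref{b3 est} with $n_i\to\infty$, the series $\sum_k N_k r_k^{s}$ converges. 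Because $r_k\to 0$, for every $\delta>0$ we may pick $N$ with $r_k<\delta$ for all $k\geq N$; then $\bigcup_{k\geq N}E_k\supset G_m$ is a $\delta$-cover giving $H^{s}_{\delta}(G_m)\leq\sum_{k\geq N}N_k r_k^{s}$, and letting $\delta\to0$ (hence $N\to\infty$) yields $H^{s}(G_m)=0$. As $s>0$ was arbitrary, $H_{dim}(G_m)=0$, and the proposition follows from countable stability.

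The one point requiring care is the count $N_k$: one must control the fragmentation of the preimages $T^{-j}(O(I_1^{(k)}))$ under the discontinuities of $T$, which is exactly what produces the crude polynomial factor $b_{k,1}^2$. This is harmless here only because the super-exponential growth built into $n_{k+1}=b_{k,2}^{k}$ forces $r_k$ to decay faster than any power of $b_{k,1}$ can grow; under weaker growth assumptions one would have to track both the cover count and the length $\lambda_3(I_1^{(k)})$ far more precisely.
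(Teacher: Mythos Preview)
Your argument is correct and reaches the same conclusion as the paper, but by a genuinely different route. The paper exploits the Rokhlin tower structure of $O(I_3^{(k)})$ directly: since a point of $I_3^{(k)}$ cycles through $O(I_3^{(k)})$ up to $n_{k+1}$ times before visiting $O(I_4^{(k)})$ and then $O(I_1^{(k)})$, the points $x\in O(I_3^{(k)})$ with $t_k(x)$ of size comparable to $b_{k,1}$ occupy only an explicit fraction of the tower, and one reads off both the count and the interval lengths from the tower combinatorics. You instead forget the tower, enlarge to $E_k=\bigcup_{j\leq m b_{k,1}}T^{-j}(O(I_1^{(k)}))$, and control the fragmentation of preimages by the trivial bound that each application of $T^{-1}$ adds at most three intervals. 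This costs you a polynomial factor $b_{k,1}^2$ in the cover count, but as you correctly observe, the super-exponential decay $r_k\leq (b_{k,2}^{\,k}b_{k,3})^{-1}$ absorbs any polynomial in $b_{k,2}$, so the loss is irrelevant. Your approach is more portable (it would transfer to other constructions with comparable growth), while the paper's is sharper at a fixed $k$ and more tightly tied to the specific combinatorics of Keane's matrices.

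One small correction: the parenthetical ``as $T$ preserves the ergodic measure $\lambda_3$'' is not the reason $d_{\lambda_3}$ coincides with the Euclidean metric---$T$ preserves $\lambda_2$ as well. The identification holds because in the set-up of Theorem~\ref{generic} the length vector is chosen so that $\lambda_3$ \emph{is} Lebesgue measure. This slip does not affect the rest of your proof.
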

\begin{proof} $I_3^{(k)}$ travels up to $n_{k+1}$ times through $O(I_3^{(k)})$ before traveling through $O(I_4^{(k)})$ and then $O(I_1^{(k)})$. Therefore, the proportion of each level of $O(I_3^{(k)})$ that have $\frac{b_{k,1}}{t_k}<\epsilon$ is $\frac{\epsilon}{b_{k,1}} n_{k+1}b_{k,3}$. There are $b_{k,3}$ levels in the Rokhlin tower. So if $n_{k+1}$ is chosen so that $(\frac{b_{k,1}}{n_{k+1}})^{\frac 1 k} b_{k,3}<\frac 1 k$ then the set of $x \in \underset{n=1}{\overset{\infty}{\cup}} \underset{k=n}{\overset{\infty}{\cap}} O(I_3^{(k)})$ such that $\underset{k \to \infty}{\limsup} \frac{b_{k,1}}{t_k(x)}>0$ has Hausdorff dimension 0.
\end{proof}
\begin{proof}[Proof of Theorem \ref{generic}] By Lemmas \ref{L3I2small}, \ref{L3I4small} and  \ref{L3I1small} and the independence of the choice of $n_{k+1}$ of the previous $n_i$ and $m_i$ (and therefore $b_{i,j}$ for $i\leq k, j \in \{1,2,3,4\}$) it is easy to see that we may have $O(I_1^{(k)}) \cup O(I_2^{(k)}) \cup O(I_4^{(k)})$ have Hausdorff dimension 0 by choosing $n_{k+1}$ large enough (or $n_{k+2}$ large enough relative to $m_{k+1}$ for $O(I_2^{(k)})$). The theorem follows with the previous proposition.
\end{proof}
\section{Concluding remarks}
The previous discussion can be repeated in another example of minimal but not uniquely ergodic IETs: those arising from skew products over rotations \cite{vskew1}. In this case the ergodic measures are symmetric. Therefore if there are two ergodic measures $\mu_1$ and $\mu_2$ then $H_{dim}(\mu_1,d_{\mu_2})=H_{dim}(\mu_2,d_{\mu_1})$. We suspect that for almost every $\alpha$ the Hausdorff dimension for any ergodic measure obtained in this way is 1. Briefly, one considers a small interval and examines how often any point must hit it and apply Frostman's Lemma. We suspect that for exceptional $\alpha$ with carefully chosen continued fraction expansion any Hausdorff dimension can be obtained. To establish upper bounds one truncates the sum which defines the skewing interval to provide obvious $\delta$-coverings for any fixed $\delta>0$. The lower bound comes from Frostman's Lemma. Following \cite{ba skew} one can skew over two intervals. For any fixed irrational $\alpha$ one can obtain any Hausdorff dimension by appropriate choice of the two skewing intervals. The arguments are similar to those above. We end with a question.
\begin{ques}
(Cornfeld) Can any residual set carry an ergodic measure for a minimal IET?
\end{ques}
\section{Acknowledgments}

I would like to thank M. Boshernitzan, T. Coulbois, and S. Semmes for helpful conversations. This work was supported in part by Rice University's Vigre Grant and a Tracy Thomas award.

{xxx}
\begin{appendix}\section{Two ergodic measures that approximate each other differently}
\begin{thm}\label{different approaches} There exists a minimal 4-IET with two ergodic measures, $\lambda_2$ and $\lambda_3$ such that for any $\epsilon>0$ we have $\underset{n \to \infty}{\liminf}\, n^{1-\epsilon}d(T^nx,y)=0$ for $\lambda_2 \times \lambda_3$ almost every $(x,y)$ and $\underset{n \to \infty}{\liminf}\, n^{\frac 1 2 + \epsilon}d(T^nx,y)=\infty$ for $\lambda_3 \times \lambda_2$ almost every $(x,y)$.
\end{thm}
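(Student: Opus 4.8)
The plan is to realize the example inside Keane's family with the Euclidean length vector chosen so that Lebesgue measure coincides with one of the two ergodic measures. Concretely, take the lengths $\lim_{r}\prod_{k=1}^r\bar A_{m_k,n_k}e_2$, so that Lebesgue measure equals $\lambda_2$ and the ambient metric $d$ is exactly $d_{\lambda_2}$; $\lambda_3$ is then the singular ergodic measure. Both assertions are shrinking-target statements with the same mechanism: if $x$ is generic for an ergodic measure $\mu$ then by $T$-invariance the orbit $\{T^nx\}_{n\le N}$ visits a target ball $B(y,\delta)$ about $N\mu(B(y,\delta))$ times, so the first visit occurs near time $\mu(B(y,\delta))^{-1}$ and produces an approximation of quality $\delta$. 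The asymmetry comes from where a typical target sits: by Lemma \ref{L3I3big} a $\lambda_3$-typical $y$ lies in the tower over $I_3^{(k)}$ for all large $k$, while (using Lemmas \ref{L2I2big}, \ref{L2I3small}, \ref{L2I4small}, \ref{L2I1small} and a Borel--Cantelli argument, since the complementary towers have summable $\lambda_2$-measure) a $\lambda_2$-typical $y$ lies in the tower over $I_2^{(k)}$ for all large $k$. Thus the governing cells and scales are $I_3^{(k)}$ for part one and $I_2^{(k)}$ for part two.

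For the second (harder, and main) statement I would use the convergence half of Borel--Cantelli, which needs no independence. Fixing $\epsilon>0$ and writing $E_n=\{x:d(T^nx,y)<n^{-(1/2+\epsilon/2)}\}$, invariance of $\lambda_3$ gives $\lambda_3(E_n)=\lambda_3(B(y,n^{-(1/2+\epsilon/2)}))$. Hence it suffices to prove the local bound $\lambda_3(B(y,\delta))\le C\delta^2$ for $\lambda_2$-a.e. $y$ and all small $\delta$, for then $\sum_n\lambda_3(E_n)\le C\sum_n n^{-(1+\epsilon)}<\infty$, and Borel--Cantelli followed by Fubini yields $\liminf_n n^{1/2+\epsilon}d(T^nx,y)=\infty$ for $\lambda_3\times\lambda_2$-a.e. $(x,y)$. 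The local bound is the real work. First I would choose $m_k,n_k$ (inside Keane's window $3(n_k+1)\le m_k\le\tfrac12(n_{k+1}+1)$) so that $\log_{\lambda_3(I_2^{(k)})}\lambda_2(I_2^{(k)})\to\tfrac12$; using $\lambda_2(I_2^{(k)})\asymp b_{k,2}^{-1}$ (Lemmas \ref{L2bigorbit}, \ref{L2I2big}) and $\lambda_3(I_2^{(k)})\asymp m_{k+1}/(n_{k+1}n_{k+2}b_{k,3})$ (Lemmas \ref{L3I2big}, \ref{L3I2small}, \ref{L3bigorbit}) together with Lemmas \ref{b2 est} and \ref{b3 est}, this reduces to balancing $2\sum_{i\le k}\log m_i$ against $\log n_{k+1}+\log n_{k+2}+\sum_{i\le k}\log n_i-\log m_{k+1}$, which a doubly-exponential growth profile achieves. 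This gives the bound at the scales $\delta=\lambda_2(I_2^{(k)})$; to pass to all $\delta$ I would run the repeating-block reduction of Lemma \ref{loc est} restricted to $\lambda_2$-typical $y$ (which meet only $I_2$-towers), whereby an arbitrary interval is comparable to $I_2^{(j)}$, to $I_2^{(j+1)}\cup I_1^{(j+1)}\cup I_2^{(j+1)}$, or to a union of such blocks.

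For the first statement the exponent $1$ is essentially forced by equidistribution: a $\lambda_2=$Lebesgue-generic orbit is asymptotically $(1/n)$-dense, so for any $y$ the first entry into $B(y,\delta)$ occurs by some time $n\lesssim\delta^{-1-\epsilon}$, and then $n^{1-\epsilon}d(T^nx,y)\le n^{1-\epsilon}\delta\lesssim\delta^{\epsilon'}\to0$, giving $\liminf_n n^{1-\epsilon}d(T^nx,y)=0$. The one point needing care is the hitting-time upper bound for a.e. (rather than every) $x$; I would obtain it from a second-moment estimate on the visit counts $\sum_{n<N}\mathbf 1_{B(y,\delta)}(T^nx)$, whose variance is controlled by the self-correlations $\lambda_2(B(y,\delta)\cap T^{-j}B(y,\delta))$, and these in turn by the Rokhlin-tower structure over $I^{(k)}$; the generous slack in the exponent $1-\epsilon$ makes this comfortable.

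The main obstacle is the passage from the single-scale estimate $\lambda_3(I_2^{(k)})\le C\lambda_2(I_2^{(k)})^2$ to the uniform-in-$\delta$ local bound $\lambda_3(B(y,\delta))\le C\delta^2$. At intermediate scales $\lambda_2(I_2^{(k+1)})\le\delta\le\lambda_2(I_2^{(k)})$ the ball $B(y,\delta)$ is a union of $I_2^{(k+1)}\cup I_1^{(k+1)}$ blocks, and one must verify that the local exponent stays $\ge2$ there; the danger is the factor $I_1^{(k+1)}$, whose $\lambda_3$-mass relative to $\lambda_2(I_2^{(k+1)})$ is governed (via Lemmas \ref{L3I1small}, \ref{L2I1small}) by $n_{k+3}/m_{k+2}$ and can be large under fast growth. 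Thus the choice of $m_k,n_k$ must be made not merely to pin $\log_{\lambda_3(I_2^{(k)})}\lambda_2(I_2^{(k)})$ at $\tfrac12$, but to keep the exponent at least $2$ on every cell a $\lambda_2$-typical point actually meets; reconciling these two requirements is the crux.
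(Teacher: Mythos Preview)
Your proposal differs substantially from the paper's argument, and the gap you yourself flag as ``the crux'' is genuine and unresolved.

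For the $\lambda_3\times\lambda_2$ half, the paper does not attempt a pointwise local bound $\lambda_3(B(y,\delta))\le C\delta^2$ followed by Borel--Cantelli in $n$. It works with the metric $d_{\lambda_2+\lambda_3}$ under the explicit choice $m_k=k^2n_k$, $n_{k+1}=b_{k,2}^2$, and exploits structure on the \emph{orbit} side rather than the target side: a $\lambda_3$-typical $x$ has $T^tx\in O(I_3^{(k)})$ for all $0\le t\le\lfloor k^{-2}n_{k+1}b_{k,3}\rfloor$ and all large $k$ (Lemma~\ref{control most}, itself a Borel--Cantelli in $k$). One then bounds the $\lambda_2$-measure of the \emph{union} $\bigcup_t B(T^tx,ct^{-\alpha})$ over each such block of times in one stroke (Lemma~\ref{controlled nice}): since the orbit stays inside $O(I_3^{(k)})$, that union is contained in a thin neighbourhood of the tower, with $\lambda_2$-measure at most $\lambda_2(O(I_3^{(k)}))$ plus a boundary term counted level by level. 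These quantities are summable in $k$, and Borel--Cantelli in $k$ on the $y$ side finishes. Because the balls $B(T^tx,\cdot)$ overlap enormously as $t$ varies, bounding their union is far easier than summing individual measures; the interpolation-across-scales problem you identify simply never arises.

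That problem is real in your scheme and you have not closed it. At intermediate scales $\lambda_2(I_2^{(k+1)})\le\delta\le\lambda_2(I_2^{(k)})$ the ball around a $\lambda_2$-typical $y$ picks up images of $I_1^{(k+1)}$, and the ratio $\lambda_3(I_1^{(k+1)})/\lambda_2(I_1^{(k+1)})^2$ is governed by different parameter combinations than the $I_2$ cells; you have not exhibited any choice of $(m_k,n_k)$ within Keane's window that controls both simultaneously with exponent $2$. The repeating-block reduction of Lemma~\ref{loc est} was designed to minimise $\log_{\lambda_3}\lambda_2$ over cells for exponents $\le 1$; repurposing its mechanism for an exponent of $2$ requires a separate verification you have not supplied.

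For the $\lambda_2\times\lambda_3$ half the paper is again more direct than your variance sketch: if $T^tx\in O(I_2^{(k)})$ for $t<b_{k,3}$ then the orbit segment $\{T^ix\}_{i\le b_{k,2}}$ hits every level of $O(I_3^{(k-1)})$ exactly $n_k$ times and is therefore $(\lambda_2+\lambda_3)(I^{(k)})$-dense there; under the chosen parameters this is $\le b_{k,2}^{-(1-\epsilon)}$. Such $x$ form a set of $\lambda_2$-measure at least $1/8$, one has $\lambda_3(O(I_3^{(k-1)}))>1/2$, and Fubini plus ergodicity finish with no second-moment computation.
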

This will be proved in two parts (the $\lambda_3 \times \lambda_2$ statement and the  $\lambda_2 \times \lambda_3$ statement) under the assumption that $$m_k=k^2n_k \text{ and }n_{k+1}=b_{k,2}^2$$ and the distance is $d_{\lambda_{2}+\lambda_3}$, a metric that evenly weights the measures. $B(x,r)$ denotes the ball about $x$ of radius $r$ with respect to the metric $d_{\lambda_2+\lambda_3}$.
\begin{rem} By a straightforward modification one could prove the above theorem with $\underset{n \to \infty}{\liminf}\, n^{c }d(T^nx,y)=\infty$ for $\lambda_3 \times \lambda_2$ almost every $(x,y)$ for any $c \in [0,1]$.
\end{rem}
\begin{prop}\label{L3 side} Under the assumptions, for any $\epsilon>0$ and $\lambda_3 \times \lambda_2$ almost every point $(x,y)$ we have $\underset{n \to \infty}{\liminf} \,  n^{\frac 1 2 +\epsilon} \, d(T^nx,y)=\infty$.
\end{prop}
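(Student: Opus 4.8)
The plan is to prove the statement by a Borel--Cantelli argument, after reducing the joint almost-everywhere claim to a one-variable local measure estimate for $\lambda_3$ near a $\lambda_2$-typical point. Fix $M\in\mathbb N$ and put $r_n=Mn^{-1/2-\epsilon}$; it suffices to show that for $\lambda_3\times\lambda_2$-a.e.\ $(x,y)$ only finitely many $n$ satisfy $d(T^nx,y)<r_n$, and then to intersect over $M\in\mathbb N$. By Tonelli it is enough to prove: for $\lambda_2$-a.e.\ $y$, for $\lambda_3$-a.e.\ $x$, finitely many such $n$ occur. For fixed $y$ the set $\{x:d(T^nx,y)<r_n\}$ is $T^{-n}B(y,r_n)$, which has $\lambda_3$-measure $\lambda_3(B(y,r_n))$ because $T$ preserves $\lambda_3$. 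Hence by the first Borel--Cantelli lemma it is enough to establish $\sum_n\lambda_3(B(y,r_n))<\infty$ for $\lambda_2$-a.e.\ $y$.

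I would obtain this from a local power bound: for $\lambda_2$-a.e.\ $y$ there is $\delta>0$ with $\lambda_3(B(y,r))\le r^{2-\delta}$ for all small $r$. Indeed $\sum_n r_n^{2-\delta}=M^{2-\delta}\sum_n n^{-(1/2+\epsilon)(2-\delta)}$ converges once $\delta$ is small, since $(1/2+\epsilon)(2-\delta)>1$ for every $\epsilon>0$. In the metric $d_{\lambda_2+\lambda_3}$ the ball $B(y,r)$ is an interval of $(\lambda_2+\lambda_3)$-length $\approx 2r$. Since $y$ is $\lambda_2$-typical it lies, at each level $k$, in a floor of the tower $O(I_2^{(k)})$, and by the composition rules of Section~1 such a floor is built only from images of $I_2^{(k+1)}$ and $I_1^{(k+1)}$, both of which are $\lambda_3$-light. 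Using the estimates of Section~\ref{estimates} with $m_k=k^2n_k$ and $n_{k+1}=b_{k,2}^2$, the local density $\lambda_3/\lambda_2$ inside $I_2^{(k)}$ is comparable to $b_{k,2}\lambda_3(I_2^{(k)})$, a large negative power of $b_{k,2}\approx r^{-1}$; so as long as $B(y,r)$ stays inside the $\lambda_3$-light $I_2$-region one gets $\lambda_3(B(y,r))\le r^{2-\delta}$, with room to spare. This reduction of a general interval to the pieces $I_i^{(k)}$ is exactly the repeating-images mechanism of Lemma~\ref{loc est}, now run in the combined metric.

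The main obstacle, and the step I expect to be hardest, is that this bound fails when $B(y,r)$ protrudes from its $I_2$-floor into a spatially adjacent $\lambda_3$-heavy ($I_3$-tower) floor, where $\lambda_3(B(y,r))$ can be as large as $\approx r$. The remedy is to show that $\lambda_2$-a.e.\ $y$ eventually avoids such interfaces. At level $k$ the number of spatial boundaries between the $\lambda_3$-light region (images of $I_1,I_2$) and the $\lambda_3$-heavy region (images of $I_3,I_4$) is at most a constant times $b_{k,3}+b_{k,4}$, hence $\lesssim b_{k,3}$, since each $\lambda_3$-heavy floor contributes only two endpoints. Thus the set $B_k$ of $y$ within combined distance $b_{k,2}^{-1}$ of such a boundary satisfies $\lambda_2(B_k)\lesssim b_{k,3}\,b_{k,2}^{-1}=b_{k,3}/b_{k,2}$, and the growth law $m_k=k^2n_k$ forces, via Lemmas~\ref{b2 bigger}, \ref{b2 est} and \ref{b3 est}, the summable bound $b_{k,3}/b_{k,2}\lesssim C^k/(k!)^2$.

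By Borel--Cantelli in $k$ for $\lambda_2$, a.e.\ $y$ lies in only finitely many $B_k$; for such $y$ and all small $r$ the ball $B(y,r)$ never reaches a $\lambda_3$-heavy floor across a level-$k$ interface, so the clean bound of the previous paragraph applies at every scale. Combining the two Borel--Cantelli steps and Tonelli yields $\liminf_n n^{1/2+\epsilon}d(T^nx,y)=\infty$ for $\lambda_3\times\lambda_2$-a.e.\ $(x,y)$. Here the exponent $\tfrac12$ enters precisely through $n_{k+1}=b_{k,2}^2$ (which controls how fast $\lambda_3(I_2^{(k)})$ shrinks relative to the scale $b_{k,2}^{-1}$), while $m_k=k^2n_k$ is exactly what makes the interface mass $b_{k,3}/b_{k,2}$ summable.
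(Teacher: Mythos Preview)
Your approach is correct in spirit but takes a genuinely different route from the paper. The paper fixes the $\lambda_3$-typical point $x$ first and controls the $\lambda_2$-measure of a neighborhood of its orbit: Lemma~\ref{control most} shows that $\lambda_3$-a.e.\ $x$ spends the first $\lfloor k^{-2}n_{k+1}b_{k,3}\rfloor$ iterates in $O(I_3^{(k)})$ for all large $k$, and Lemma~\ref{controlled nice} then bounds the $\lambda_2$-measure of a $ct^{-\alpha}$-neighborhood of $O(I_3^{(k)})$ by grouping its levels according to the tower $O(I_i^{(k-1)})$ they lie in. One Borel--Cantelli (in $k$, for $\lambda_2$) and Fubini then finish. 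You instead fix the $\lambda_2$-typical point $y$ and aim for a Frostman-type local bound $\lambda_3(B(y,r))\le r^{2-\delta}$, followed by Borel--Cantelli in $n$. Your argument is closer to a pointwise-dimension computation; the paper's is a direct covering estimate for the orbit. Both exploit the same asymmetry of the towers, but the paper never needs to analyze intermediate radii or worry about which floors are spatially adjacent to a given point.

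One point in your sketch needs tightening. You take the ``$\lambda_3$-light'' region at level $k$ to be $O(I_1^{(k)})\cup O(I_2^{(k)})$ so that the number of light/heavy interfaces is $\lesssim b_{k,3}+b_{k,4}$; but the density bound you quote (``inside $I_2^{(k)}$'') only holds in $O(I_2^{(k)})$, since $I_1^{(k)}$ is composed of images of $I_3^{(k+1)}$ and $I_4^{(k+1)}$ and is therefore $\lambda_3$-heavy at the next scale. So avoiding $B_k$ as you define it only guarantees $B(y,r)\subset O(I_1^{(k)})\cup O(I_2^{(k)})$, not $B(y,r)\subset O(I_2^{(k)})$. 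The fix is easy: add the boundaries of the $O(I_1^{(k)})$ floors to your bad set, contributing another $\lesssim b_{k,1}\approx b_{k-1,3}$ interfaces, so that $\lambda_2(B_k)\lesssim (b_{k,1}+b_{k,3}+b_{k,4})/b_{k,2}$, which is still summable by the same $m_k=k^2n_k$ mechanism. With this adjustment your boundary Borel--Cantelli forces $B(y,r)$ into a union of $O(I_2^{(k)})$ floors, where the periodic $I_1^{(k+1)}/I_2^{(k+1)}$ pattern and the estimates of Section~\ref{estimates} give the claimed $r^{2-\delta}$ bound (indeed much better) at every scale.
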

Notice that because the metric is $d_{\lambda_2+\lambda_3}$ it suffices to consider the $\lambda_2$ measure.

We first show that some points poorly approximate a $\lambda_2$ typical point.
\begin{lem} \label{controlled nice} If $ x \in \underset{t=1}{\overset{\lfloor \frac 1 {k^2} n_{k+1} b_{k,3} \rfloor}{\cap}} T^{-t}(O(I_3^{(k)}))$ then

\begin{multline} \lefteqn{\lambda_2(\underset{t=\lfloor \frac 1 {(k-1)^2} n_{k} b_{k-1,3} \rfloor}{\overset{\lfloor \frac 1 {k^2} n_{k+1} b_{k,3} \rfloor}{\cup}}B(T^tx,(\frac c {t^{\alpha}})))}\\ \leq \lambda_2(O(I_3^{(k)}))+(b_{k-1,1}+b_{k-1,4}+b_{k-1,3})2c(\lfloor \frac 1 {(k-1)^2} n_{k} b_{k-1,3} \rfloor)^{-\alpha}.\end{multline}
\end{lem}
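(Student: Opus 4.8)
The plan is to split the union into its part inside the tower $O(I_3^{(k)})$ and the part spilling outside, and to control the spillover through the coarse tower structure at level $k-1$. First I would record that the radii $ct^{-\alpha}$ decrease in $t$, so every ball in the range has radius at most $R:=c\,(\lfloor \tfrac{1}{(k-1)^2}n_k b_{k-1,3}\rfloor)^{-\alpha}$. Since the hypothesis forces $T^tx\in O(I_3^{(k)})$ for every $t$ in the range, each ball is centered inside $O(I_3^{(k)})$, so
$$\lambda_2\Big(\bigcup_t B(T^tx,ct^{-\alpha})\Big)\le \lambda_2(O(I_3^{(k)}))+\lambda_2\Big(\bigcup_t B(T^tx,ct^{-\alpha})\setminus O(I_3^{(k)})\Big),$$
and it remains to bound the spillover term by $2R\,(b_{k-1,1}+b_{k-1,3}+b_{k-1,4})$.

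The structural input is the column $A_{m_k,n_k}e_3=(1,0,n_k-1,1)^{\tau}$: as $I_3^{(k)}$ travels up $O(I_3^{(k)})$ it enters the coarse towers $O(I_1^{(k-1)})$, $O(I_3^{(k-1)})$, $O(I_4^{(k-1)})$ exactly once, $n_k-1$ times, and once, and no other coarse tower. A ball centered at $T^tx$ can leave $O(I_3^{(k)})$ only by crossing the boundary of the coarse level-interval currently holding its center. I would then argue the decisive overlap fact: although the journey meets $O(I_3^{(k-1)})$ during $n_k-1$ separate passes (and this recurs on every journey inside the long window), all these passes occupy the \emph{same} fixed family of $b_{k-1,3}$ level-intervals $T^i(I_3^{(k-1)})$ of $[0,1)$, so the total spillover attributable to $O(I_3^{(k-1)})$ sits in a single $R$-neighborhood of $\partial O(I_3^{(k-1)})$, counted once. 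This collapse is what replaces the naive height $b_{k,3}$ by $b_{k-1,1}+b_{k-1,3}+b_{k-1,4}$.

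To finish, I would use that $O(I_j^{(k-1)})$ is a union of $b_{k-1,j}$ level-intervals and that a ball of radius $R$ in the metric $d_{\lambda_2+\lambda_3}$ has $\lambda_2$-measure at most $2R$ (its $d$-diameter is $\le 2R$, hence its combined, and a fortiori its $\lambda_2$, measure is $\le 2R$). Thus the $R$-neighborhood of the boundaries of the three towers carries $\lambda_2$-mass at most $2R(b_{k-1,1}+b_{k-1,3}+b_{k-1,4})$, which is the asserted error once $R$ is written out. The facts that $O(I_3^{(k)})$ is a genuine Rokhlin tower over $I_3^{(k)}$ and that $\lambda_2(I^{(k)})$ is comparable to $b_{k,2}^{-1}$ (Lemma \ref{L2bigorbit}) are what make the first two steps precise.

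The hard part will be the overlap count of the second paragraph: one must show that summing the spillover over the $n_k-1$ repeated passes through $O(I_3^{(k-1)})$, and over all the journeys in the window, does not multiply the boundary contribution — that the excess is governed by the fixed boundary $\partial O(I_3^{(k-1)})$ rather than by the far larger number of fine intervals $T^j(I_3^{(k)})$ actually visited. Making this compatible with keeping the main term equal to $\lambda_2(O(I_3^{(k)}))$, rather than the larger $\lambda_2(O(I_3^{(k-1)}))$, is delicate: it forces a careful comparison of $R$ with the sizes of the images of $I_2^{(k)}$ that separate the images of $I_3^{(k)}$ inside $I_3^{(k-1)}$, and this is precisely where the growth hypotheses $m_k=k^2n_k$ and $n_{k+1}=b_{k,2}^2$ must be used.
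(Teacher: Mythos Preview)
Your overall strategy---bound the union of balls by an $R$-neighbourhood of the orbit inside $O(I_3^{(k)})$ and then group the $b_{k,3}$ fine levels according to the coarse levels of $O(I_1^{(k-1)})$, $O(I_3^{(k-1)})$, $O(I_4^{(k-1)})$ they sit in---is exactly what the paper does. The error term $2R\,(b_{k-1,1}+b_{k-1,3}+b_{k-1,4})$ arises precisely because the $n_k-1$ passes through $O(I_3^{(k-1)})$ reuse the same $b_{k-1,3}$ physical intervals, just as you say.

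Where you go astray is the sentence ``a ball centered at $T^tx$ can leave $O(I_3^{(k)})$ only by crossing the boundary of the coarse level-interval currently holding its center.'' This is false, and the growth hypotheses do not rescue it. Inside a single coarse level $T^iI_3^{(k-1)}$ the images of $I_3^{(k)}$ are separated by images of $I_1^{(k)},I_2^{(k)},I_4^{(k)}$; the widest of these gaps has $d_{\lambda_2+\lambda_3}$-length of order $\lambda_2(I_2^{(k)})\asymp b_{k,2}^{-1}$, whereas $R\asymp b_{k,3}^{-\alpha}$ with $\alpha$ near $\tfrac12$. Since $b_{k,2}>b_{k,3}$, we have $R\gg b_{k,2}^{-1}$, so each ball swallows the neighbouring images of $I_2^{(k)}$ entirely. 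The spillover into $O(I_3^{(k)})^c$ \emph{within} a coarse level is therefore not negligible, and no comparison of $R$ with the $I_2^{(k)}$-gaps will make it so.

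What the paper's grouping actually yields is the coarser main term $\lambda_2\big(O(I_1^{(k-1)})\cup O(I_3^{(k-1)})\cup O(I_4^{(k-1)})\big)$: one simply bounds the $R$-neighbourhood of the orbit, coarse level by coarse level, by $\lambda_2(T^iI_l^{(k-1)})+2R$. The appearance of $\lambda_2(O(I_3^{(k)}))$ in the displayed inequality is most likely a slip. It is harmless for the application: under $m_k=k^2n_k$ one has $\lambda_2(O(I_3^{(k-1)}))\le 4k^{-2}$ (Lemma~\ref{L2I3small}) and $\lambda_2(O(I_1^{(k-1)})),\lambda_2(O(I_4^{(k-1)}))$ are even smaller, so the sum in the proof of Proposition~\ref{L3 side} still converges. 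Drop the attempt to force the sharper main term; prove the coarser inequality, which is what the paper's one-line grouping argument gives, and the rest of your write-up goes through.
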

\begin{proof} By our assumption $x$ lies in $O(I_3^{(k)})$ for time described, therefore the measure of the set is at most the measure of a $(\lfloor \frac 1 {(k-1)^2} n_{k} b_{k-1,3} \rfloor)^{-\frac 1 2}$ neighborhood of $O(I_3^{(k)})$. The lemma follows from observing that $I_3^{(k)}$ travels $n_k$ times through $O(I_3^{(k-1)})$ once through $O(I_1^{(k-1)})$ and once through $O(I_4^{(k-1)})$. One then groups the levels $O(I_3^{(k)})$ by the $O(I_i^{(k-1)})$ that they lie in.
\end{proof}
Next we show that these points are $\lambda_3$ typical.
\begin{lem} \label{control most} $\lambda_3\left(\underset{r=1}{\overset{\infty}{\cup}} \underset{k=r}{\overset{\infty}{\cap}} (\underset{t=1}{\overset{\lfloor \frac 1 {k^2} n_{k+1} b_{k,3} \rfloor}{\cap}} T^{-t}(O(I_3^{(k)}))\right)=1$.
\end{lem}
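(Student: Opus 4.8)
The plan is to prove the statement through the Borel--Cantelli lemma applied to the complementary events. Write $N_k=\lfloor \frac{1}{k^2}n_{k+1}b_{k,3}\rfloor$ and $C_k=\underset{t=1}{\overset{N_k}{\cap}}T^{-t}(O(I_3^{(k)}))$, so that the set in the lemma is $\underset{r=1}{\overset{\infty}{\cup}}\underset{k=r}{\overset{\infty}{\cap}}C_k$ and its complement is exactly the set of points lying in $C_k^c$ for infinitely many $k$. Since $\lambda_3$ is a probability measure, it suffices to show $\sum_k \lambda_3(C_k^c)<\infty$; Borel--Cantelli then gives that $\lambda_3$-a.e.\ point lies in all but finitely many $C_k$, which is the assertion.

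Next I would decompose the bad set. If $x\in O(I_3^{(k)})$ and the first $N_k$ forward iterates of $x$ remain in $O(I_3^{(k)})$ then $x\in C_k$; hence, writing $\tau(x)=\min\{t\ge 1:T^tx\notin O(I_3^{(k)})\}$ for the exit time from the tower, one has
\[
C_k^c\subseteq \left([0,1)\setminus O(I_3^{(k)})\right)\cup\{x\in O(I_3^{(k)}):\tau(x)\le N_k\}.
\]
The point of isolating the second piece is that one cannot afford the crude bound $\lambda_3(C_k^c)\le N_k\,\lambda_3(O(I_3^{(k)})^c)$, which diverges; instead the measure of the points that are about to leave soon must be controlled by a single exit base.

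To carry this out I would use the combinatorial structure of the construction. Under the first return map $I_3^{(k)}$ maps into $I_3^{(k)}\cup I_4^{(k)}$ and covers $I_4^{(k)}$, so every orbit leaving $O(I_3^{(k)})$ does so by entering the base $I_4^{(k)}$ of $O(I_4^{(k)})$. Thus the exit base $E_k=\{z\in O(I_3^{(k)}):Tz\in O(I_4^{(k)})\}$ is mapped bijectively by $T$ onto $I_4^{(k)}$, and $T$-invariance gives $\lambda_3(E_k)=\lambda_3(I_4^{(k)})$. Every $x$ with $\tau(x)\le N_k$ lies in $\underset{j=0}{\overset{N_k-1}{\cup}}T^{-j}(E_k)$, so the second piece has measure at most $N_k\,\lambda_3(I_4^{(k)})$; by Lemma \ref{L3I4small} and $\lambda_3(I^{(k)})<b_{k,3}^{-1}$ this is at most $\frac{1}{k^2}n_{k+1}b_{k,3}\cdot\frac{1}{n_{k+1}b_{k,3}}=\frac{1}{k^2}$, which is summable. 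For the first piece I would write $\lambda_3([0,1)\setminus O(I_3^{(k)}))=\lambda_3(O(I_1^{(k)}))+\lambda_3(O(I_2^{(k)}))+\lambda_3(O(I_4^{(k)}))$ and bound each orbit measure by $b_{k,i}\lambda_3(I_i^{(k)})$: since $b_{k,1}$ and $b_{k,4}$ are at most $2b_{k,3}$ (from the recursions for the $b_{k,i}$), Lemmas \ref{L3I1small} and \ref{L3I4small} give $\lambda_3(O(I_1^{(k)}))$ and $\lambda_3(O(I_4^{(k)}))$ of order $1/n_{k+1}$, while Lemma \ref{L3I2small} together with $n_{k+2}=b_{k+1,2}^2$ makes $\lambda_3(O(I_2^{(k)}))$ still smaller. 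As $n_{k+1}=b_{k,2}^2$ grows super-exponentially these are summable, completing the estimate.

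The main obstacle is the exit-base argument of the third paragraph: the heart of the matter is recognizing that leaving $O(I_3^{(k)})$ within $N_k$ steps is an event of measure at most $N_k\lambda_3(I_4^{(k)})$ rather than $N_k\lambda_3(O(I_3^{(k)})^c)$, and this rests on the combinatorial fact that all exits from $O(I_3^{(k)})$ funnel through $I_4^{(k)}$. Once $\lambda_3(E_k)=\lambda_3(I_4^{(k)})$ is established, the bound $N_k\lambda_3(I_4^{(k)})\le 1/k^2$ is forced by the very choice of $N_k$, and the remaining orbit-measure estimates are routine consequences of Section \ref{estimates} under the given growth rates.
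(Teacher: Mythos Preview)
Your proof is correct and follows essentially the same route as the paper's. Both arguments use Borel--Cantelli on the complements $C_k^c$, split $C_k^c$ into the part outside $O(I_3^{(k)})$ (bounded via the orbit measures of $I_1^{(k)},I_2^{(k)},I_4^{(k)}$ exactly as in the paper) and the part inside $O(I_3^{(k)})$ that escapes within $N_k$ steps, and both obtain the $1/k^2$ bound for the latter. Where the paper simply writes ``by how $T|_{I^{(k)}}$ acts on $I_3^{(k)}$'' to get $\lambda_3(C_k)\ge(1-\tfrac{1}{k^2})\lambda_3(O(I_3^{(k)}))$, you spell out the mechanism: since $T|_{I^{(k)}}(I_3^{(k)})\subset I_3^{(k)}\cup I_4^{(k)}$, every exit from $O(I_3^{(k)})$ passes through a set of measure $\lambda_3(I_4^{(k)})$, whence the escaping set has measure at most $N_k\lambda_3(I_4^{(k)})\le 1/k^2$. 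This is exactly the content the paper is invoking; your version just makes the exit-base idea explicit.
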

\begin{proof} First, observe that by how $T|_{I^{(k)}}$ acts on $I_3^{(k)}$ we have $$\lambda_3 \left(\underset{t=1}{\overset{\lfloor \frac 1 {k^2} n_{k+1} b_{k,3} \rfloor}{\cap}} T^{-t}(O(I_3^{(k)}))\right)\geq (1- \frac 1 {k^2})\lambda_3( O(I_3^{(k)})).$$ Also 
\begin{multline}\lefteqn{\lambda_3(O(I_1^{(k)}) \cup O(I_2^{(k)}) \cup O(I_4^{(k)}))=} \\b_{k,1}\lambda_3(I_1^{(k)})+b_{k,2}\lambda_3(I_2^{(k)})+b_k,4\lambda_3(I_4^{(k)})< \\  b_{k,1}\frac{1}{n_{k+1}}+b_{k,2}\frac{2m_{k+1}}{n_{k+1}n_{k+2}}+b_{k,4}\frac{1}{n_{k+1}}.\end{multline}
 
 For the inequalities observe that $\lambda_3(I_i^{(k)})< \frac{\lambda_3(I_i^{(k)})}{\lambda_3(I^{(k)})}$ and appeal to Lemmas \ref{L3I1small}, \ref{L3I2small} and \ref{L3I4small}. By Lemma \ref{b2 bigger} and our assumption $\frac{b_{k,i}}{n_{k+1}}<\frac 1 {b_{k,2}}$. Therefore the proposition follows by the Borel-Cantelli Theorem with the observation that $\underset{k=1}{\overset{\infty}{\sum}}\frac 1 {k^2}+\frac {3}{b_{k,2}}$ converges.
\end{proof}

\begin{proof}[Proof of Proposition \ref{L3 side}] By our assumption on $n_k,m_k$ it follows that $$\underset{k=1}{\overset{\infty}{\sum}} \lambda_2(O(I_3^{(k)}))+2c(b_{k-1,1}+b_{k-1,4}+b_{k-1,3})2(\lfloor \frac 1 {k^2} n_{k} b_{k+1,3} \rfloor)^{\frac 1 2 +\epsilon}$$ converges. By the Borel-Cantelli Theorem it follows that for all such $x$ we have $\lambda_2(\LS B(T^ix, \frac c {i^{\alpha}}))=0$. By Fubini's Theorem it follows that for all such $x$ we have $\underset{n \to \infty}{\limsup}\, n^{\frac 1 2 + \epsilon} d(T^nx,y)=\infty$. By Lemma \ref{control most} the proposition follows.
\end{proof}
\begin{prop} \label{L2 side} Under our assumptions for any $\epsilon>0$ and $\lambda_2 \times \lambda_3$ almost every point $(x,y)$ we have $\underset{n \to \infty}{\liminf}\, n^{1-\epsilon} \, |T^nx-y|=0$.
\end{prop}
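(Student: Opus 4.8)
The plan is to dualize the proof of Proposition \ref{L3 side}. By Fubini's theorem it is enough to work on the product space and show that, for each $\epsilon>0$, there is a constant $c$ and a sparse sequence $N_1<N_2<\cdots$ so that
$$\sum_j (\lambda_2\times\lambda_3)\Big(\big\{(x,y):\ d(T^tx,y)>c\,t^{-(1-\epsilon)}\ \text{for all}\ t\le N_j\big\}\Big)<\infty .$$
Borel--Cantelli then gives, for $\lambda_2$-almost every $x$ and $\lambda_3$-almost every $y$, some $t_j\le N_j$ with $d(T^{t_j}x,y)\le c\,t_j^{-(1-\epsilon)}$ for all large $j$; carrying this out with exponent $1-\epsilon/2$ and using $n^{1-\epsilon}d=n^{-\epsilon/2}\big(n^{1-\epsilon/2}d\big)$ forces $\liminf_n n^{1-\epsilon}d(T^nx,y)=0$ (the case $t_j\not\to\infty$ occurs only on the countable, hence $\lambda_3$-null, set $\{y=T^nx\}$). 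Thus everything reduces to a covering (shrinking-target) estimate: the balls $\bigcup_{t\le N_j}B\big(T^tx,\,c\,t^{-(1-\epsilon)}\big)$ should cover $\lambda_3$-almost all of $[0,1)$.

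Here the confinement argument of Proposition \ref{L3 side} cannot simply be mirrored, and in fact its failure is the source of good approximation. Even though $\lambda_2(O(I_2^{(k)}))\to1$, the tower $O(I_2^{(k)})$ already splits into $b_{k,2}$ spatially separated levels, and with the larger radius $c\,t^{-(1-\epsilon)}$ the boundary contribution to the covered $\lambda_3$-mass is of order $b_{k,2}\cdot t^{-(1-\epsilon)}\approx b_{k,2}^{\epsilon}\to\infty$: the $\sim N$ distinct orbit points spill their balls into the adjacent $\lambda_3$-charged intervals. To convert this into a covering I would use the second moment method. Set $Z_N(x,y)=\#\{t\le N:\ d(T^tx,y)\le c\,t^{-(1-\epsilon)}\}$. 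Since $x\sim\lambda_2$ is $T$-invariant and independent of $y\sim\lambda_3$,
$$\mathbb{E}[Z_N]=\sum_{t\le N}\int \lambda_2\!\big(B(y,c\,t^{-(1-\epsilon)})\big)\,d\lambda_3(y).$$
The inner integral is the average $\lambda_2$-mass of a $d$-ball about a $\lambda_3$-typical point, which I would estimate from Section \ref{estimates}: there $\lambda_2(I_3^{(k)})=b_{k,2}^{-1+o(1)}$ and $\lambda_3(I_3^{(k)})=b_{k,3}^{-1}$, and the recursions $b_{k,2}\approx m_kb_{k-1,2}$, $b_{k,3}\approx n_kb_{k-1,3}$ together with $m_k=k^2n_k$, $n_{k+1}=b_{k,2}^2$ yield $b_{k,3}=b_{k,2}^{1-o(1)}$, so the local exponent $\log_{\lambda_3(I_3^{(k)})}\lambda_2(I_3^{(k)})\to1$. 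A $d$-ball of radius $\rho\approx\lambda_3(I_3^{(k)})$ about a $\lambda_3$-typical $y$ contains its $I_3^{(k)}$-level, so its $\lambda_2$-mass is $\ge\lambda_2(I_3^{(k)})=\rho^{1+o(1)}$; summing over the scales $\rho_t$ that are near-matched to some $\lambda_3(I_3^{(k)})$ gives $\mathbb{E}[Z_N]\gtrsim N^{\epsilon-o(1)}\to\infty$.

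It then remains to show $\mathrm{Var}(Z_N)\ll\mathbb{E}[Z_N]^2$, whence $\mathbb{P}(Z_N=0)\le\mathrm{Var}(Z_N)/\mathbb{E}[Z_N]^2\to0$ summably along a sparse $N_j$, giving the covering. Writing the off-diagonal terms as
$$\mathbb{P}\big(y\in B(T^sx,\rho_s)\cap B(T^tx,\rho_t)\big)\le \sup_z\lambda_3\!\big(B(z,\rho_{\min})\big)\cdot\lambda_2\big(\{w:\ d(T^{t-s}w,w)\le\rho_s+\rho_t\}\big),$$
the whole question becomes one of bounding the $\lambda_2$-measure of the approximate-return sets $R_\ell(\delta)=\{w:\ d(T^{\ell}w,w)\le\delta\}$. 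For generic lags $\ell$ one expects $\lambda_2(R_\ell(\delta))\approx\delta$, which makes the double sum comparable to $\big(\sum_t\rho_t\big)^2=\mathbb{E}[Z_N]^2$ and closes the estimate. The danger, and the main obstacle, is the band of lags $\ell$ near the return times $b_{k,2}$ and their combinations, where the rigidity of the induced maps makes $T^{\ell}$ a $\delta$-approximate identity on a large $\lambda_2$-set and $\lambda_2(R_\ell(\delta))$ anomalously large. Controlling how often, and on how big a set, this near-return phenomenon occurs is where the explicit tower combinatorics and the return-time bounds of Section \ref{measure Keane} (for instance $b_{k,2}\le\prod_i 2m_i$ of Lemma \ref{b2 est} and the dominance $b_{k,2}>b_{k,i}$ of Lemma \ref{b2 bigger}) must be pushed through; once the bad lags are shown to contribute only a lower-order correction to $\mathbb{E}[Z_N]^2$, the second moment bound holds and the covering, hence the proposition, follows.
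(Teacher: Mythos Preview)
Your proposal leaves the decisive step unproved: you name the variance inequality $\mathrm{Var}(Z_N)\ll\mathbb{E}[Z_N]^2$ as ``the main obstacle'' and say the bad-lag contribution ``must be pushed through,'' but do not do it. This is a real gap, and not a routine one. The rigidity of $T$ under $\lambda_2$ means there are on the order of $b_{k,2}/b_{k-1,2}$ lags $\ell$ (near multiples of $b_{k-1,2}$) for which $\lambda_2(R_\ell(\delta))$ is bounded below by a constant rather than by $\delta$; showing their aggregate contribution is $o(\mathbb{E}[Z_N]^2)$ requires arithmetic on the full return-time hierarchy that nothing in Section~\ref{measure Keane} supplies. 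As written this is a plan, not a proof.

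More to the point, the second-moment machinery is unnecessary here, and your own heuristic already \emph{is} the paper's argument. You observe that the $b_{k,2}$ orbit points of a $\lambda_2$-typical $x$ spill balls of radius $b_{k,2}^{-(1-\epsilon)}$ into the adjacent $\lambda_3$-charged intervals with diverging total $\lambda_3$-mass. The paper simply makes this literal as a deterministic covering: if $T^tx\in O(I_2^{(k)})$ for $0\le t<b_{k,3}$ (a condition of $\lambda_2$-measure at least $\tfrac18$, since $\lambda_2(O(I_2^{(k)}))>\tfrac14$ by Lemma~\ref{L2bigorbit} and $1-b_{k,3}/b_{k,2}>\tfrac12$), then the first $b_{k,2}$ iterates of $x$ put $n_k$ points into every level of $O(I_3^{(k-1)})$ and are therefore $(\lambda_2(I^{(k)})+\lambda_3(I^{(k)}))$-dense in that tower. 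Under the standing assumptions this gap is below $b_{k,2}^{-(1-\epsilon)}$ for large $k$, so $\bigcup_{i\le b_{k,2}}B\big(T^ix,\,b_{k,2}^{-(1-\epsilon)}\big)$ already contains all of $O(I_3^{(k-1)})$, a set of $\lambda_3$-measure exceeding $\tfrac12$. Fubini and the ergodicity of $\lambda_2$ upgrade this positive-measure statement to full $\lambda_2\times\lambda_3$-measure. No variance estimate and no control of approximate returns is needed; the ``failure of confinement'' you noticed is itself the direct density proof.
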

We first show that some points approximate the $\lambda_3$ typical point not too poorly.
\begin{lem} If $T^tx \in O(I_2^{(k)})$ for $t<b_{k,3}$ then $\lambda_3(\underset{i=1}{\overset{b_{k,2}}{\cup}}B(T^ix,\frac{1}{b_{k,2}}^{1-\epsilon}))>\frac 1 2$ for large enough $k$.
\end{lem}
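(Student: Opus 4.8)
The plan is to show that if a point spends all $b_{k,3}$ of its first $b_{k,3}$ iterates (starting from a level of the Rokhlin tower over $I_2^{(k)}$) inside $O(I_2^{(k)})$, then the forward orbit segment $\{T^i x\}_{i=1}^{b_{k,2}}$ is distributed across the tower $O(I_2^{(k)})$ in a way that well-approximates $\lambda_3$. The key structural fact, established earlier, is that $\lambda_2(O(I_2^{(k)})) > \frac{1}{4}$ (Lemma \ref{L2bigorbit}) and that $\lambda_2(I^{(k)})$ is proportional to $b_{k,2}^{-1}$, so that the $b_{k,2}$ disjoint images of $I_2^{(k)}$ making up $O(I_2^{(k)})$ each have $\lambda_2$-measure comparable to $b_{k,2}^{-1}$. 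Since $T$ is a piecewise isometry, as $x$ moves up through the Rokhlin tower, the first $b_{k,2}$ images $T^i x$ visit roughly each of the $b_{k,2}$ levels of $O(I_2^{(k)})$ once, so their $\frac{1}{b_{k,2}}^{1-\epsilon}$-neighborhoods (in the $\lambda_2+\lambda_3$ metric) collectively cover a definite fraction of $O(I_2^{(k)})$.

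First I would make precise the correspondence between the iterates $T^i x$ for $1\le i\le b_{k,2}$ and the levels of $O(I_2^{(k)})$. The hypothesis $T^t x \in O(I_2^{(k)})$ for $t<b_{k,3}$ guarantees that the orbit does not leave the tower during the relevant window, so that consecutive iterates climb the tower one level at a time and then return via the first-return map $T|_{I^{(k)}}$. Second, I would estimate the $\lambda_3$-measure each neighborhood $B(T^i x, (1/b_{k,2})^{1-\epsilon})$ captures. Because the metric $d_{\lambda_2+\lambda_3}$ weights both measures, a ball of radius $(1/b_{k,2})^{1-\epsilon}$ contains an interval whose $\lambda_3$-length is comparable to $(1/b_{k,2})^{1-\epsilon}$, and the union over the $b_{k,2}$ images then has $\lambda_3$-measure at least comparable to $b_{k,2}\cdot(1/b_{k,2})^{1-\epsilon} = b_{k,2}^{\epsilon}$, which exceeds $\frac12$ for all large $k$. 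I would be careful to use the fact that these images are genuinely disjoint (they are the distinct levels of $O(I_2^{(k)})$), so no double-counting occurs and the measures add.

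The main obstacle I anticipate is controlling the $\lambda_3$-measure of the balls rather than their $\lambda_2$-measure: the two measures are singular with respect to one another, so while the $b_{k,2}$ levels of $O(I_2^{(k)})$ have nearly equal $\lambda_2$-measure (each $\approx b_{k,2}^{-1}$), their individual $\lambda_3$-measures can be wildly uneven. Indeed $\lambda_3(I_2^{(k)})$ is small (Lemma \ref{L3I2small}), so one cannot simply say each level contributes $\approx b_{k,2}^{-1}$ in $\lambda_3$. The resolution is that the balls have radius measured in $d_{\lambda_2+\lambda_3}$, so each ball contains a $d_{\lambda_2+\lambda_3}$-interval of radius $(1/b_{k,2})^{1-\epsilon}$; in the $\lambda_2+\lambda_3$ metric such an interval has $\lambda_3$-content automatically at least $(1/b_{k,2})^{1-\epsilon}$ minus its $\lambda_2$-content, and since the $\lambda_2$-content of each level is only $\approx b_{k,2}^{-1}$, which is negligible compared to $(1/b_{k,2})^{1-\epsilon}$ for small $\epsilon$, the bulk of each ball's radius must come from $\lambda_3$. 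Summing the genuinely disjoint $\lambda_3$-contributions over the $b_{k,2}$ levels then yields a total $\lambda_3$-measure of order $b_{k,2}\cdot(1/b_{k,2})^{1-\epsilon}=b_{k,2}^{\epsilon}\to\infty$, which exceeds $\frac12$ for large $k$. I would conclude by verifying that the growth assumptions $m_k=k^2 n_k$ and $n_{k+1}=b_{k,2}^2$ force $b_{k,2}\to\infty$, so that the threshold $k$ beyond which the bound holds is well-defined.
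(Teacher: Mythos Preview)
Your argument has a genuine gap: the disjointness claim is false, and without it the summation strategy collapses. Each level $T^i(I_2^{(k)})$ has $d_{\lambda_2+\lambda_3}$-width $\lambda_2(I_2^{(k)})+\lambda_3(I_2^{(k)})$, which is on the order of $b_{k,2}^{-1}$, whereas your balls have radius $(1/b_{k,2})^{1-\epsilon}\gg b_{k,2}^{-1}$. So each ball spills far beyond its level and the balls overlap enormously. Your own computation exposes this: you obtain a total $\lambda_3$-measure of order $b_{k,2}^{\epsilon}\to\infty$, which is impossible for a probability measure if the pieces were disjoint. Relatedly, the step ``the $\lambda_2$-content of each ball is only $\approx b_{k,2}^{-1}$'' confuses the ball with the level it is centered in; since the ball extends well past that level, its $\lambda_2$-content is not controlled by $\lambda_2(I_2^{(k)})$.

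The paper avoids summing ball-measures altogether. Instead it identifies a concrete set of large $\lambda_3$-measure that the union of balls \emph{covers}, namely $O(I_3^{(k-1)})$. From the column of $A_{m_k,n_k}$ corresponding to $I_2^{(k)}$, one full period of length $b_{k,2}$ places $n_k$ hits of the orbit into every level of $O(I_3^{(k-1)})$; within each such level the images of $I_2^{(k)}$ are interleaved with at most one image of each other $I_i^{(k)}$, so consecutive hits are separated by at most $\lambda_2(I^{(k)})+\lambda_3(I^{(k)})$ in $d_{\lambda_2+\lambda_3}$. Under the standing assumptions $m_k=k^2 n_k$, $n_{k+1}=b_{k,2}^2$ one checks $\lambda_2(I^{(k)})+\lambda_3(I^{(k)})<(1/b_{k,2})^{1-\epsilon}$ for large $k$ (using $\lambda_2(I^{(k)})\asymp b_{k,2}^{-1}$ and $\lambda_3(I^{(k)})<b_{k,4}^{-1}$ with $b_{k,4}>b_{k,2}^{1-\epsilon}$). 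Hence the balls cover $O(I_3^{(k-1)})$, whose $\lambda_3$-measure is close to $1$. The missing idea in your attempt is precisely this density-in-a-target-set argument; a pure counting of ball measures cannot work here because the overlaps are uncontrolled.
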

\begin{proof} By the assumption of the hypothesis the set $\{x,Tx,...,T^{b_{k,2}}x\}$ is at least $\lambda_2(I^{(k)})+\lambda_3(I^{(k)})$ dense in $O(I_3^{(k-1)})$. 
(The hypothesis of the Lemma ensures that $\{x,Tx,...,T^{b_{k,2}}x\}$ has $n_{k}$ hits in each level of $O(I_3^{(k-1)})$ by examining $T|_{I^{(k-1)}}(x)$, $O(I_2^{(k)})$ is $\lambda_2(I^{(k)})+ \lambda_3(I^{(k)})$ dense in $O(I_3^{(k-1)})$.) 
 Notice that $\lambda_3(I^{(k)})< \frac 1 {b_{k,4}}$ because there are $b_{k,4}$ disjoint copies of $I^{(k)}$ in $I$. By our choice of $m_k$ and $n_k$, $b_{k,4}>b_{k,2}^{1-\epsilon}$ for all large enough $k$. Also $\lambda_2(I^{(k)})$ is proportional to $\frac 1 {b_{k,2}}$ by Lemma \ref{L2bigorbit}. Therefore, $\{x,Tx,...,T^{b_{k,2}}\}$ is $\frac 1 {b_{k,4}^{1+\epsilon}}$ dense for all large enough $k$ and the Lemma follows.
\end{proof}
We next show that these points are $\lambda_2$ significant.
\begin{lem} The set of points satisfying the hypothesis of the above Lemma has $\lambda_2$ measure at least $\frac 1 8$.
\end{lem}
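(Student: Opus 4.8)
The plan is to recognize the set in question as (at least) the union of the bottom levels of the Rokhlin tower $O(I_2^{(k)})$ and then bound its $\lambda_2$-measure using Lemma \ref{L2bigorbit} together with the ratio estimate $b_{k,3}/b_{k,2}\le\frac13$.

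First I would set up the tower picture. Write $O(I_2^{(k)})=\bigcup_{j=0}^{b_{k,2}-1}T^j(I_2^{(k)})$ as a disjoint union of levels, each of $\lambda_2$-measure $\lambda_2(I_2^{(k)})$ since $\lambda_2$ is $T$-invariant. A point $x=T^j(y)$ with $y\in I_2^{(k)}$ lies in $O(I_2^{(k)})$ together with its next $b_{k,2}-1-j$ images, after which the orbit first returns to $I^{(k)}$. Hence if $x$ sits in one of the bottom $b_{k,2}-b_{k,3}+1$ levels (i.e.\ $j\le b_{k,2}-b_{k,3}$) then $T^tx\in O(I_2^{(k)})$ for every $t<b_{k,3}$, so $x$ satisfies the hypothesis of the preceding Lemma. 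Consequently the set of points satisfying that hypothesis contains these bottom levels, whence
\[
\lambda_2\bigl(\{x:T^tx\in O(I_2^{(k)})\text{ for }t<b_{k,3}\}\bigr)\ge (b_{k,2}-b_{k,3}+1)\,\lambda_2(I_2^{(k)}).
\]

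Next I would convert this into a fraction of $O(I_2^{(k)})$. Since $\lambda_2(O(I_2^{(k)}))=b_{k,2}\lambda_2(I_2^{(k)})$, the right-hand side equals $\lambda_2(O(I_2^{(k)}))\bigl(1-\tfrac{b_{k,3}-1}{b_{k,2}}\bigr)$, and by Lemma \ref{L2bigorbit} we have $\lambda_2(O(I_2^{(k)}))>\tfrac14$. It therefore remains only to bound $b_{k,3}/b_{k,2}$ away from $1$. Using the recursions $b_{k,2}=m_kb_{k-1,2}+n_kb_{k-1,3}+b_{k-1,4}\ge m_kb_{k-1,2}$ and $b_{k,3}=b_{k-1,1}+(n_k-1)b_{k-1,3}+b_{k-1,4}$, together with Lemma \ref{b2 bigger} (which gives $b_{k-1,2}\ge b_{k-1,i}$ for $i\in\{1,3,4\}$), one gets $b_{k,3}\le(n_k+1)b_{k-1,2}$, hence $b_{k,3}/b_{k,2}\le(n_k+1)/m_k\le\tfrac13$ by the standing assumption $m_k\ge3(n_k+1)$ (valid for all large $k$, in particular under $m_k=k^2n_k$). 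Combining, the measure is at least $\tfrac14\cdot\tfrac23=\tfrac16>\tfrac18$.

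The arithmetic is routine; the only point to get right is the first step, namely translating the dynamical hypothesis ``$T^tx\in O(I_2^{(k)})$ for $t<b_{k,3}$'' into a statement about tower levels and ensuring one takes a lower bound: points whose orbits leave and re-enter $O(I_2^{(k)})$ only enlarge the set, so discarding them is harmless. I expect the main obstacle to be the bookkeeping of the tower --- in particular verifying that a point in the bottom $b_{k,2}-b_{k,3}+1$ levels genuinely has all of its first $b_{k,3}$ iterates inside $O(I_2^{(k)})$, which holds precisely because such a point reaches the top level only after more than $b_{k,3}$ steps.
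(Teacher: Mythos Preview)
Your proposal is correct and follows essentially the same approach as the paper: the paper's proof simply asserts that $\lambda_2(O(I_2^{(k)}))>\tfrac14$ (Lemma \ref{L2bigorbit}) and that a proportion $1-\tfrac{b_{k,3}}{b_{k,2}}>\tfrac12$ of $O(I_2^{(k)})$ satisfies the hypothesis, giving $\tfrac14\cdot\tfrac12=\tfrac18$. You have spelled out the tower picture that justifies the proportion claim and sharpened the ratio bound to $b_{k,3}/b_{k,2}\le\tfrac13$, yielding $\tfrac16$ instead of $\tfrac18$, but the idea is the same.
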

\begin{proof} This follows from the fact that $\lambda_2(O(I_2^{(k)}))>\frac 1 4$ (Lemma \ref{L2bigorbit}) 
 and $1-\frac{b_{k,3}}{b_{k,2}}> \frac 1 2$ of these points satisfy the hypothesis of the lemma.
\end{proof}
\begin{proof}[Proof of Proposition \ref{L2 side}] The proof follows from Fubini's Theorem and ergodicity. \end{proof}
\end{appendix}

\begin{thebibliography}{xxx}
\bibitem{me} Chaika, J: Hausdorff Dimension for ergodic measures of interval exchange transformations. Journal of Modern Dynamics 2 (2008) no. 3 457-464. 
\bibitem{ba skew} Chaika, J: Skew products over rotations with exotic properties. Preprint.
\bibitem{nonue} Keane, M: Non-ergodic interval exchange
transformations, Israel J. Math. 26 (2) (1977)
188-196.
\bibitem{mattila} Mattila, P: Geometry of sets and measures in Euclidean spaces. Cambridge University Press 1995.
\bibitem{vskew1} Veech, William A.
Strict ergodicity in zero dimensional dynamical systems and the Kronecker-Weyl theorem ${\rm mod} 2$.
 Trans. Amer. Math. Soc.  140  1969 1--33
 \bibitem{ietv} Veech, W: Interval exchange transformations. \textit{J.
D'Analyse Math.} \textbf{33} (1978) 222-272.
\end{thebibliography}
\end{document}